\newcommand{\Rone}{\mathbb{R}}
\newcommand{\Rn}{\mathbb{R}^n}
\newcommand{\Rnone}{\mathbb{R}^{n+1}}
\newcommand{\Snm}{\mathbb{S}^{n-1}}
\newcommand{\CR}{\mathscr{C}_{R,d}^n}
\newcommand{\CRc}{\overline{\mathscr{C}}_{R,d}^n}
\newcommand{\Sdp}{\mathscr{S}_{\frac{d}{\pi}}^1}
\newcommand{\kapsurf}[1]{\bm{\kappa}_{#1}}
\newcommand{\kapbase}{\bm{\kappa}_{R}}
\newcommand{\musurf}[1]{\mu_{#1}}
\newcounter{desccount}
\newcommand{\descitem}[1]{%
  \item[#1] \refstepcounter{desccount}\label{#1}
}
\newcommand{\descref}[1]{\hyperref[#1]{#1}}
\newtheorem{theorem}{Theorem}[section]
\newtheorem{lemma}[theorem]{Lemma}
\newtheorem{corollary}[theorem]{Corollary}
\newtheorem{remark}[theorem]{Remark}
\title{Stability of near cylindrical stationary solutions to weighted-volume preserving curvature flows}
\author{David Hartley}
\address{}
\email{david.hartley@monash.edu}
\date{\today}
\begin{document}


\begin{abstract}
In this paper we consider a class of weighted-volume preserving curvature flows acting on hypersurfaces that are trapped within two parallel hyperplanes and satisfy an orthogonal boundary condition. In the author's thesis the stability of cylinders under the flows was considered; it was found that they are stable provided the radius satisfies a certain condition. Here we consider flows where there is a critical radius such that the cylinders are only stable on one side of the critical value, and the weight function is a linear combination of the elementary symmetric functions, the reason for such a choice is made clear in the appendix. We find that in such instances bifurcation from the cylindrical stationary solutions occurs at the critical radius and we determine a condition on the speed and weight functions such that the nearby non-cylindrical stationary solutions are stable under the flow. We will also highlight the specific cases of homogeneous speed functions and the mixed-volume preserving mean curvature flows.
\end{abstract}


\maketitle


\section{Introduction}
For a hypersurface $\Omega_0$ given by an embedding $\tilde{\bm{X}}_0:M^n\rightarrow\Rnone$, where $M^n$ is an $n$-dimensional manifold, its weighted-volume preserving curvature flow is a family of hypersurfaces $\left\{\Omega_t:t\in[0,T)\right\}$, $T>0$, such that $\Omega_t=\bm{X}\left(M^n,t\right)$ and $\bm{X}$ satisfies
\begin{equation}\label{WVPCF}
\frac{\partial\bm{X}}{\partial t}=\left(\frac{1}{\int_{M^n}\Xi\left(\bm{\kappa}\right)\,d\mu_t}\int_{M^n}F\left(\bm{\kappa}\right)\Xi\left(\bm{\kappa}\right)\,d\mu_t -F\left(\bm{\kappa}\right)\right)\bm{\nu},\ \ \bm{X}\left(\cdot,0\right)=\tilde{\bm{X}}_0,
\end{equation}
where $F\left(\bm{\kappa}\right)$ and $\Xi\left(\bm{\kappa}\right)$ are smooth, symmetric functions of the principal curvatures, $\bm{\kappa}=(\kappa_1,\ldots,\kappa_n)$, $\bm{\nu}$ is a choice of unit normal, and $\,d\mu_t$ is the volume form of the hypersurface $\Omega_t$. Short time existence for initially near cylindrical hypersurfaces, under assumptions \descref{(A1)}-\descref{(A3)} below, was proved in \cite{HartleyPhD}.

This flow is a generalisation of the volume preserving mean curvature flow (VPMCF), which has $F\left(\bm{\kappa}\right)=\sum_{a=1}^n\kappa_a$ and $\Xi(\bm{\kappa})=1$. In the case of compact, convex without boundary hypersurfaces, the VPMCF has been show to exist for all time and the hypersurfaces $\Omega_t$ converge to a sphere of the same enclosed volume as $\Omega_0$ as $t\rightarrow\infty$ \cite{Gage86,Huisken87}. The stability of spheres as stationary solutions to the VPMCF has previously been studied by Escher and Simonett. In \cite{Escher98A} Escher and Simonett consider graphs over spheres and prove that if the height function is small in the little-H\"older space $h^{1,\beta}$, then the VPMCF exists for all time and the hypersurfaces converge to a sphere. This result also proves the existence of non-convex hypersurfaces that converge to spheres. Generalisations of the VPMCF to speeds that are a power of an elementary symmetric function has been considered in \cite{Cabezas10}, where convergence to a sphere was obtained for convex hypersurfaces under an additional pinching assumption. When the weight function is an elementary symmetric function the flow is the mixed-volume preserving curvature flow, which has been studied by McCoy in \cite{McCoy04} for speed functions given by the mean curvature and in \cite{McCoy05} for a more general class of speed functions. In both cases it was proved that if the initial hypersurface is strictly convex it will converge to a sphere under the flow.

In the present article we consider the setting introduced by Athanassenas in \cite{Athanassenas97}. That is, we consider hypersurfaces, $\Omega$, with non-empty boundary embedded in the domain
\begin{equation*}
W=\left\{\bm{x}\in\Rnone:0<x_{n+1}<d\right\},\ d>0,
\end{equation*}
such that $\partial\Omega\subset\partial W$ and $\Omega$ meets $\partial W$ orthogonally. In the paper by Athanassenas it was shown that the volume, $Vol$, enclosed by $\Omega_0$ and $\partial W$ is preserved under the VPMCF and, assuming an axial symmetry, it was proved that if the initial hypersurface satisfies
\begin{equation}\label{MariaAssump}
\left|\Omega_0\right|:=\int_{M^n}\,d\mu_0\leq\frac{Vol}{d},
\end{equation}
then the VPMCF will exist for all time and the hypersurfaces will converge to a cylinder. In this case the assumption (\ref{MariaAssump}) was used to ensure that the hypersurfaces $\Omega_t$ do not touch the axis of rotation. In \cite{Hartley13} the author considered graphs over cylinders and proved that a cylinder of radius
\begin{equation*}
R>\frac{d\sqrt{n-1}}{\pi},
\end{equation*}
is stable under the VPMCF, in the same sense as in \cite{Escher98A}. In \cite{HartleyPhD} this result was extended to the flow (\ref{WVPCF}) subject to the condition:
\begin{equation}\label{RCond}
R>\frac{d}{\pi}\sqrt{\frac{(n-1)\frac{\partial F}{\partial\kappa_1}\left(\kapbase\right)}{\frac{\partial F}{\partial\kappa_n}\left(\kapbase\right)}},
\end{equation}
where $\kapbase=\left(\frac{1}{R},\ldots,\frac{1}{R},0\right)$.
\begin{theorem}[\cite{HartleyPhD}]\label{MainRCyl}
Let $\CR$ represent a cylinder of radius $R$ and length $d$. Assuming \descref{(A1)}-\descref{(A3)} are satisfied for $\tilde{R}=R$, there exists a neighbourhood of zero, $O_c\subset h^{2,\alpha}_{\frac{\partial}{\partial z}}\left(\CRc\right)$ with $0<\alpha<1$ (see Section \ref{SecRedEq}), such that if $\Omega_0$ is a graph over $\CR$ with height function in $O_c$, then there exists $T>0$ such that the flow (\ref{WVPCF}) with orthogonal boundary condition exists for $t\in[0,T)$. Furthermore, if (\ref{RCond}) is satisfied, then $T=\infty$ and the hypersurfaces converge exponentially fast to a cylinder as $t\rightarrow\infty$, with respect to the $h^{2,\alpha}\left(\CRc\right)$ topology.
\end{theorem}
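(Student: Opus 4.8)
\noindent\emph{Proof strategy.} The plan is to adapt the approach of \cite{Escher98A,Hartley13}: turn the geometric flow into a scalar nonlocal quasilinear parabolic problem for a height function, linearise at the cylinder, and invoke the theory of analytic semigroups and equilibrium manifolds on little-H\"older spaces. First, parametrise hypersurfaces near $\CR$ as normal graphs $\bm{X}_{\CR}+\rho\,\bm{\nu}_{\CR}$ over $\CRc$; taking the normal component of \eqref{WVPCF} and expressing the orthogonality condition in these coordinates reduces the flow to
\[
\partial_t\rho=\Phi(\rho),\qquad \partial_z\rho=0\ \text{ on }\ \partial\CRc,\qquad \rho(\cdot,0)=\rho_0,
\]
where $\Phi$ is a nonlocal quasilinear operator, smooth between the relevant little-H\"older spaces, with $\Phi(0)=0$ since $\CR$ is stationary. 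Assumptions \descref{(A1)}--\descref{(A3)} guarantee that $\Phi$ is well defined (in particular $\int\Xi\,d\mu>0$) and that its principal part is uniformly elliptic; the little-H\"older scale $h^{2,\alpha}_{\frac{\partial}{\partial z}}(\CRc)$, with the Neumann condition built into the domain, is used because it is a continuous interpolation scale on which the nonlinear superposition operators and the maximal-regularity machinery behave well. Short-time existence is then the quoted result of \cite{HartleyPhD}: $L:=D\Phi(0)$ is, up to a bounded finite-rank nonlocal correction, a uniformly elliptic second-order operator with a Neumann boundary condition, hence generates an analytic semigroup with continuous maximal regularity on $h^{0,\alpha}(\CRc)$, and the standard local existence theorem for quasilinear parabolic problems supplies, for $\rho_0$ in a neighbourhood $O_c$ of $0$, a unique maximal solution on some $[0,T)$.

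For the long-time assertion I would analyse $\sigma(L)$. Computing the first variation of the speed at the cylinder, where $\bm{\kappa}\equiv\kapbase=(\tfrac1R,\dots,\tfrac1R,0)$ so that the spherical and axial blocks decouple and $\tfrac{\partial F}{\partial\kappa_a}(\kapbase)$ equals $\tfrac{\partial F}{\partial\kappa_1}(\kapbase)$ for $a<n$ by symmetry, one finds
\[
L\rho=c\bigl(L_0\rho-\overline{L_0\rho}\bigr),\qquad L_0\rho=\tfrac{\partial F}{\partial\kappa_1}(\kapbase)\Bigl(\tfrac{1}{R^2}\Delta_{\Snm}\rho+\tfrac{n-1}{R^2}\rho\Bigr)+\tfrac{\partial F}{\partial\kappa_n}(\kapbase)\,\partial_z^2\rho,
\]
with $c>0$, $\Delta_{\Snm}$ the unit-sphere Laplacian, and $\overline{\,\cdot\,}$ the mean over $\CRc$; the nonlocal term, from the averaged speed in \eqref{WVPCF}, affects only the constant mode. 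Diagonalising $L_0$ in the basis $\{Y_\ell(\theta)\cos(k\pi z/d):\ell,k\ge0\}$ of spherical harmonics times Neumann eigenfunctions, the eigenvalues of $L$ are $0$ on the constant mode and
\[
\lambda_{\ell,k}=\tfrac{\partial F}{\partial\kappa_1}(\kapbase)\,\tfrac{(n-1)-\ell(\ell+n-2)}{R^2}-\tfrac{\partial F}{\partial\kappa_n}(\kapbase)\,\tfrac{k^2\pi^2}{d^2}
\]
for $(\ell,k)\ne(0,0)$. For $\ell\ge1$ these are $\le0$, with equality only at $\lambda_{1,0}=0$ --- the $n$ modes generating translations of the cylinder perpendicular to its axis --- while for $\ell=0$, $k\ge1$, positivity of $\lambda_{0,k}$ is ruled out precisely by $R^2\ge\frac{d^2(n-1)}{\pi^2}\,\frac{\partial F/\partial\kappa_1(\kapbase)}{\partial F/\partial\kappa_n(\kapbase)}$, i.e., by \eqref{RCond}; when \eqref{RCond} holds strictly, $\lambda_{0,k}<0$ for all $k\ge1$, so $\sigma(L)\setminus\{0\}\subset\{\operatorname{Re}z\le-\delta\}$ for some $\delta>0$. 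Thus under \eqref{RCond} the zero eigenvalue is semisimple, its eigenspace being the $(n+1)$-dimensional span of the constant mode and the degree-one modes. (When \eqref{RCond} fails one instead has $\lambda_{0,1}>0$, so the cylinder is linearly unstable --- the source of the bifurcation treated elsewhere in the paper.)

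It remains to identify $\ker L$ with the tangent space of the manifold of nearby cylindrical equilibria. Let $\mathcal{M}\subset h^{2,\alpha}_{\frac{\partial}{\partial z}}(\CRc)$ be the smooth $(n+1)$-dimensional set of height functions representing cylinders of nearby radius whose axes are translated perpendicular to the $x_{n+1}$-axis; each such cylinder meets $\partial W$ orthogonally and carries constant speed, hence is stationary, so $\mathcal{M}\subset\{\Phi=0\}$ and $T_0\mathcal{M}=\ker L$ by the spectral computation above. (The constant direction in $\ker L$, which would be unstable for $L_0$ alone, is controlled by conservation of the weighted volume $\int\Xi\,d\mu$ under \eqref{WVPCF}: the flow preserves the level sets of this functional, so one may equivalently restrict to a level set, on which $\mathcal{M}$ reduces to the translation family.) With $\CR$ now seen to be normally stable --- an equilibrium manifold with semisimple zero eigenvalue and strictly negative normal spectrum --- the generalised principle of linearised stability for quasilinear parabolic problems (the nonlocal, little-H\"older version of the centre-manifold argument of \cite{Escher98A}) yields a possibly smaller $O_c$ such that for $\rho_0\in O_c$ the solution exists for all $t\ge0$ and converges exponentially, in $h^{2,\alpha}(\CRc)$, to some $\rho_\infty\in\mathcal{M}$; equivalently $\Omega_t$ converges exponentially to a cylinder. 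I expect the main obstacle to be this last step in the Banach (rather than Hilbert) setting: one must verify that the nonlocal averaging term preserves the analytic-semigroup and maximal-regularity framework and the semisimplicity of the zero eigenvalue, and that $\mathcal{M}$ is normally hyperbolic in the little-H\"older topology. The eigenvalue computation, though it is what produces the sharp threshold \eqref{RCond}, is routine once the linearisation has been assembled.
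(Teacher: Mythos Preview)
The paper does not actually prove Theorem~\ref{MainRCyl}; it is quoted from the author's thesis \cite{HartleyPhD} and used only as background. So there is no ``paper's own proof'' to compare against directly. That said, your outline is a sound and standard route to such a result, and the eigenvalue computation you give is consistent with the axially symmetric special case the paper does work out (equation \eqref{D1Gbar02}): restricting to $\ell=0$ your $\lambda_{0,k}$ reproduces exactly the spectrum of $D_1\bar G(0,\eta)$ there, and the threshold is \eqref{RCond}.

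One genuine slip: you write that the flow conserves $\int\Xi\,d\mu$ and use this to handle the constant mode. That is not the right quantity, and in any case Theorem~\ref{MainRCyl} assumes only \descref{(A1)}--\descref{(A3)}. The paper's Appendix shows that a conserved weighted volume of the required form exists \emph{only} under \descref{(A4)} (i.e.\ $\Xi$ a linear combination of elementary symmetric functions), and even then the invariant is $WVol(u)=\int Q(u)\,dz$ with $Q$ as in \eqref{defQ}, not $\int\Xi\,d\mu$. Fortunately your argument does not actually need a conservation law: once you have identified the $(n{+}1)$-dimensional kernel with the tangent space to the manifold $\mathcal M$ of nearby cylinders (varying radius and transverse centre), the normally-stable-equilibrium-manifold theorem already disposes of the constant direction. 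So drop the parenthetical about conservation, or replace it by the observation that the nonlocal averaging term in $L$ is precisely what sends the constant mode from the unstable eigenvalue $\tfrac{n-1}{R^2}\tfrac{\partial F}{\partial\kappa_1}(\kapbase)$ of $L_0$ to the zero eigenvalue of $L$, where it is absorbed by $\mathcal M$.

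For comparison, when the paper later proves stability results of its own (Corollary~\ref{GenStable}), it takes a different tack available under \descref{(A4)}: it first restricts to axial symmetry (so the translation modes disappear), then uses the invariant $WVol$ to build the diffeomorphism $\psi_{\eta_0}$ of Lemma~\ref{defpsi} and pass to the reduced flow \eqref{RedWVPCF} on mean-zero functions, thereby \emph{removing} the zero eigenvalue altogether and applying the plain linearised-stability theorem (Lunardi, Theorem 9.1.7) with strictly negative spectrum. Your equilibrium-manifold approach is the correct substitute when \descref{(A4)} is not available and when non-axially-symmetric perturbations are allowed, and is presumably what is done in \cite{HartleyPhD}.
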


It is important to note that in general both sides of (\ref{RCond}) depend on $R$, however when the speed function is homogeneous in $\bm{\kappa}$ this is not the case and (\ref{RCond}) is true for $R$ large enough. For the axially symmetric VPMCF in two dimensions the stability result was also obtained by LeCrone in \cite{LeCronePre}. In that paper LeCrone also showed that the cylinder of radius $\frac{d}{\pi}$ is part of a continuous family of constant mean curvature (CMC) unduloids, which satisfy the orthogonal boundary condition. He then proved that the unduloids in this family close to the cylinder are unstable stationary solutions of the VPMCF. The aim of this paper is to generalise this final result to the weighted-volume preserving curvature flows in any dimension.

For the main analysis we make some assumptions on the form of $F$ and $\Xi$:
\begin{description}
	\descitem{(A1)} $F$ and $\Xi$ are smooth, symmetric functions
	\descitem{(A2)} $\frac{\partial F}{\partial \kappa_a}\left(\kapsurf{\tilde{R}}\right)>0$  for every $a=1,\ldots,n$ and for some $\tilde{R}\in\Rone^+$
	\descitem{(A3)} $\Xi\left(\kapsurf{\tilde{R}}\right)>0$ for some $\tilde{R}\in\Rone^+$
	\descitem{(A4)} $\Xi(\bm{\kappa})=\sum_{a=0}^nc_aE_a(\bm{\kappa})$
	\descitem{(A5)} There exists $R_{crit}>0$ such that, in a neighbourhood of $R_{crit}$, (\ref{RCond}) is satisfied for $R>R_{crit}$ ($R<R_{crit}$) and strictly not satisfied for $R< R_{crit}$ ($R> R_{crit}$),
\end{description}
where
\begin{equation}\label{EleDef}
E_a\left(\bm{\kappa}\right)=\sum_{1\leq b_1<\ldots<b_a\leq n}\prod_{i=1}^a\kappa_{b_i},
\end{equation}
are the elementary symmetric functions. The first two assumptions ensure isotropy and local parabolicity respectively, while \descref{(A3)} ensures a valid flow. Assumption \descref{(A4)} ensures that a type of weighted-volume is preserved under the flow, see Appendix \ref{AppInv}. While the final assumption ensures a change in sign of the critical eigenvalue. Since \descref{(A5)} is not the most descriptive assumption, we note here that it is satisfied when $F$ is homogeneous and we will often consider the revised assumption:
\begin{description}
	\descitem{(A5)*} $F$ is homogeneous of degree $k$, i.e. $F(\alpha\bm{\kappa})=\alpha^kF(\bm{\kappa})$ for all $\alpha>0$.
\end{description}
We note that when this assumption is used, we have $R_{crit}=\frac{d}{\pi}\sqrt{\frac{(n-1)\frac{\partial F}{\partial\kappa_1}\left(\kapsurf{1}\right)}{\frac{\partial F}{\partial\kappa_n}\left(\kapsurf{1}\right)}}$ and if \descref{(A2)} is true for some $\tilde{R}>0$ then it is true for all $\tilde{R}>0$.

\begin{theorem}\label{MainThm}
For the flow (\ref{WVPCF}), with assumptions \descref{(A1)}-\descref{(A5)} satisfied for $\tilde{R}=R_{crit}$, the cylinder of radius $R_{crit}$ is part of continuous family of stationary solutions that satisfy the orthogonal boundary condition. Furthermore, if (\ref{FCond}) holds then the stationary solutions close to the cylinder are stable under axially symmetric, weighted-volume preserving perturbations.
\end{theorem}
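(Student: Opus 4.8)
The plan is to attack the statement in two halves, mirroring its structure: first establishing the existence of a nontrivial family of stationary solutions through the cylinder of radius $R_{crit}$, and then analysing the stability of the members of that family near the cylinder.

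For the first half, I would set up the problem as an equation for the height function $\rho$ of a graph over the cylinder $\CR$, reducing the stationarity condition $F(\bm{\kappa}) = \text{const}$ (with the constant determined by the weighted-volume constraint) plus the orthogonal boundary condition to a quasilinear elliptic boundary-value problem on the cylinder, parametrised by $R$. The cylinder itself, $\rho \equiv 0$, is a solution for every $R$, so we are looking for a bifurcating branch. The natural tool is Crandall--Rabinowitz bifurcation from a simple eigenvalue: I would compute the linearisation of this operator at $\rho = 0$, which is (up to positive factors) the Jacobi-type operator appearing implicitly in Theorem~\ref{MainRCyl}, and show that as $R$ crosses $R_{crit}$ a single eigenvalue crosses zero --- this is exactly what assumption \descref{(A5)} encodes, with the corresponding eigenfunction being $\cos(\pi z/d)$ (the lowest axially symmetric mode compatible with the Neumann condition). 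One then checks the transversality (the eigenvalue crosses with nonzero speed in $R$, which follows from differentiating (\ref{RCond}) and using \descref{(A5)}) and the simplicity of the kernel within the axially symmetric class. Restricting to axially symmetric perturbations is important here: it collapses the kernel to one dimension so the Crandall--Rabinowitz hypotheses genuinely hold. This yields a smooth curve $s \mapsto (R(s), \rho(s))$ of stationary solutions with $\rho(s) = s\cos(\pi z/d) + o(s)$ and $R(0) = R_{crit}$.

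For the second half, the stability of the bifurcating solutions, I would invoke the principle of exchange of stability (the linearised-stability analysis along a bifurcating branch, in the style of Crandall--Rabinowitz's companion paper and its parabolic-equation incarnations à la Escher--Simonett / Kielhöfer). The key point is that the sign of the perturbed eigenvalue $\lambda(s)$ along the branch is governed, to leading order, by the same data that governs $R'(s)$ near $s = 0$ together with a second-order quantity --- essentially the direction of bifurcation (sub- versus supercritical) combined with the transversality sign from \descref{(A5)}. Computing the bifurcation direction requires expanding the stationary equation to second order in $s$ and projecting onto the kernel; this produces the quantity whose sign is precisely what the condition (\ref{FCond}) controls. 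One then feeds the resulting linearised operator, restricted to the weighted-volume-preserving axially symmetric perturbations, into the abstract generation/stability result used for Theorem~\ref{MainRCyl} (analytic semigroup on the little-Hölder space $h^{2,\alpha}$, spectrum determining exponential stability), concluding that when (\ref{FCond}) holds the spectrum of the linearisation at $\rho(s)$, $s \neq 0$ small, lies strictly in the left half-plane modulo the trivial directions, hence the stationary solution is (exponentially) stable under the constrained flow.

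The main obstacle I anticipate is the second-order bifurcation computation: one has to expand $F(\bm{\kappa}(\rho))$ and the weighted-volume functional $\int \Xi\,d\mu$ --- here assumption \descref{(A4)} and the appendix's invariance are essential --- to second order in the height function, keep careful track of which terms survive projection onto $\cos(\pi z/d)$ (many vanish by orthogonality of trigonometric modes, since $\cos^2 = \tfrac12(1+\cos 2\cdot)$ has no first-mode component, so the genuinely relevant contributions come from the second derivatives of $F$ and from the curvature's own nonlinearity), and assemble these into a single scalar whose sign is (\ref{FCond}). Organising this computation so that the condition comes out in a clean, checkable form --- and verifying it specialises correctly to the homogeneous-speed and mixed-volume-preserving mean curvature cases promised in the introduction --- is where the real work lies; the bifurcation and semigroup machinery, by contrast, is essentially off-the-shelf once the linearised operator and its spectral picture from \descref{(A5)} are in hand.
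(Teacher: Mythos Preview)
Your proposal is correct and follows essentially the same strategy as the paper: Crandall--Rabinowitz bifurcation from a simple eigenvalue at the critical radius, exchange of stability along the bifurcating branch via a second-order expansion producing the scalar condition (\ref{FCond}), and then analytic-semigroup machinery on little-H\"older spaces for the nonlinear stability conclusion.

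The one technical point on which the paper is more explicit, and which you should be aware of, is how the one-dimensionality of the kernel is actually secured. Axial symmetry alone does not do it: the constant function is always in the kernel of the linearised operator (cylinders of every radius are stationary), so at $R_{crit}$ the axially symmetric kernel is two-dimensional, spanned by $1$ and $\cos(\pi z/d)$. The paper deals with this by first performing a reduction (Lemma~\ref{defpsi} and Corollary~\ref{EquivEq}): it constructs a diffeomorphism $\psi_{\eta_0}$ splitting each function into its mean-zero part $\bar{u}$ together with a scalar parameter $\eta$ encoding the weighted-volume --- this is precisely where \descref{(A4)} and the invariance from the appendix enter --- and then runs both the bifurcation and the stability analysis on the reduced operator $\bar{G}_{\eta_0}(\bar{u},\eta)$ acting on mean-zero functions, with $\eta$ (not $R$) as the bifurcation parameter. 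On that space the kernel at $\eta_0=(n-1)/R_{crit}$ is genuinely one-dimensional, Crandall--Rabinowitz applies directly, and the weighted-volume constraint on perturbations is built in automatically for the stability step. Your formulation with $R$ and the constraint imposed separately is equivalent in spirit, but to make it rigorous you would need this or an analogous device (e.g.\ a Lyapunov--Schmidt splitting off the constant mode) made explicit; the paper's $\psi$-reduction is what also makes the $\Xi$-dependent terms in (\ref{FCond}) appear cleanly, via $D_{11}^2\psi$.
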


In (\ref{FCond}) we have used the functions $F_{a}(\eta)=\frac{\partial F}{\partial\kappa_a}\left(\kapsurf{\frac{n-1}{\eta}}\right)$, $a=\{1,n\}$, $F_{nn}(\eta)=\frac{\partial^2F}{\partial\kappa_n^2}\left(\kapsurf{\frac{n-1}{\eta}}\right)$ and $F_{nnn}(\eta)=\frac{\partial^3F}{\partial\kappa_n^3}\left(\kapsurf{\frac{n-1}{\eta}}\right)$ to simplify the notation. Also note that when \descref{(A5)*} is satisfied then these functions are homogeneous, so have the representations $F_a(\eta)=\eta^{k-1}F_a$, $F_{nn}(\eta)=\eta^{k-2}F_{nn}$ and $F_{nnn}(\eta)=\eta^{k-3}F_{nnn}$, where we define the constants $F_a:=F_a(1)$, $F_{nn}:=F_{nn}(1)$ and $F_{nnn}:=F_{nnn}(1)$.

\begin{remark}\label{ThmRem}
For $n=4$, an example of a non-homogeneous speed function that satisfies assumption \descref{(A1)}, \descref{(A2)} (for all $R$) and \descref{(A5)} is
		\begin{equation*}
			F\left(\bm{\kappa}\right)=\kappa_1\kappa_2\kappa_3\kappa_4 +\frac{\pi^2}{12d^2}\left(\kappa_1^2+\kappa_2^2+\kappa_3^2+\kappa_4^2\right) +\frac{\pi^2}{18d^2}\left(\kappa_1^3+\kappa_2^3+\kappa_3^3+\kappa_4^3\right),
		\end{equation*}
		in which case $R_{crit}=1$. However if we change the speed function slightly to
		\begin{equation*}
			F\left(\bm{\kappa}\right)=\kappa_1\kappa_2\kappa_3\kappa_4 +\frac{\pi^2}{6d^2}\left(\kappa_1^2+\kappa_2^2+\kappa_3^2+\kappa_4^2\right) +\frac{\pi^2}{18d^2}\left(\kappa_1^3+\kappa_2^3+\kappa_3^3+\kappa_4^3\right),
		\end{equation*}
		then (\ref{RCond}) is never satisfied so assumption \descref{(A5)} is false (while both \descref{(A1)} and \descref{(A2)} are still satisfied for all $\tilde{R}$). This makes the classification of allowable speed functions difficult.
\end{remark}

\begin{corollary}\label{HomogCor}
For the flow (\ref{WVPCF}), with assumptions \descref{(A1)}- \descref{(A5)*} satisfied for $\tilde{R}=\frac{d}{m\pi}\sqrt{\frac{(n-1)F_1}{F_n}}$ with $m\in\mathbb{N}$, then the cylinder of radius $\frac{d}{m\pi}\sqrt{\frac{(n-1)F_1}{F_n}}$ is part of continuous family of stationary solutions that satisfy the orthogonal boundary condition. Furthermore, if $m=1$ and
\begin{align}\label{HomogCond}
&\frac{-6(n-1)\sum_{a=1}^n\frac{c_a\pi^a}{d^a}\left(\frac{F_n}{(n-1)F_1}\right)^{\frac{a}{2}}\left(\binom{n-2}{a-1} -\frac{F_1}{F_n}\binom{n-1}{a-1}\right)}{\sum_{a=0}^n\frac{c_a\pi^a}{d^a}\left(\frac{F_n}{(n-1)F_1}\right)^{\frac{a}{2}}\binom{n-1}{a}} -k^2+6n+6-\frac{3F_1^2F_{nnn}}{2F_n^3}\\
&\hspace{0.6cm}  +\frac{2F_1^2F_{nn}^2}{F_n^4} +\frac{kF_1F_{nn}}{2F_n} +\frac{(n-1)F_1^2F_{nn}}{F_n^3}-\frac{(n-1)^2F_1^2}{F_n^2} -\frac{(n-1)(k-3)F_1}{F_n}<0,\nonumber
\end{align}
holds, then the stationary solutions close to the cylinder of radius $\frac{d}{\pi}\sqrt{\frac{(n-1)F_1}{F_n}}$ are stable under axially symmetric, weighted-volume preserving perturbations.
\end{corollary}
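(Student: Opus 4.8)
The plan is to obtain Corollary \ref{HomogCor} as a specialisation of Theorem \ref{MainThm}: first I would use homogeneity to locate the radii at which the linearisation of the flow degenerates and read off the continuous family of stationary solutions, then I would make the abstract inequality (\ref{FCond}) explicit in the present setting so as to arrive at (\ref{HomogCond}), and finally I would single out $m=1$ as the only mode whose bifurcating branch can be stable.

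For the first assertion, the starting point is that \descref{(A5)*} places us in the hypotheses of Theorem \ref{MainThm}. Since $F$ is homogeneous of degree $k$, each $\frac{\partial F}{\partial\kappa_a}$ is homogeneous of degree $k-1$, so along $\kapsurf{R}=\left(\frac1R,\dots,\frac1R,0\right)$ the quotient $\frac{\partial F}{\partial\kappa_1}\left(\kapsurf{R}\right)\big/\frac{\partial F}{\partial\kappa_n}\left(\kapsurf{R}\right)$ is the constant $F_1/F_n$; hence (\ref{RCond}) holds exactly for $R>R_1:=\frac{d}{\pi}\sqrt{\frac{(n-1)F_1}{F_n}}$, which gives \descref{(A5)} with $R_{crit}=R_1$, and \descref{(A2)}--\descref{(A3)} at one $\tilde{R}$ automatically persist for every $\tilde{R}>0$. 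Next I would use that the bifurcation step in the proof of Theorem \ref{MainThm} needs only a single eigenvalue of the linearisation to cross zero transversally in $R$: under axial symmetry, the orthogonal boundary condition, and the weighted-volume constraint (which removes the $m=0$ mode), that operator diagonalises over the Neumann modes $\cos\!\left(\frac{m\pi z}{d}\right)$, $m\ge1$, each of which automatically satisfies the boundary condition, and the $m$-th eigenvalue is a positive multiple of $\frac{(n-1)F_1}{R^2}-\frac{m^2\pi^2F_n}{d^2}$, which is strictly decreasing in $R$ (using $F_1>0$) and therefore vanishes, transversally, exactly at $R=R_m:=\frac{d}{m\pi}\sqrt{\frac{(n-1)F_1}{F_n}}$. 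Applying the bifurcation argument at each $R_m$ then produces the continuous family of stationary solutions of (\ref{WVPCF}) meeting $\partial W$ orthogonally through the cylinder of radius $R_m$.

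For the stability statement I would take $m=1$, so $R_{crit}=R_1$, and unwind (\ref{FCond}). Under \descref{(A5)*} the functions occurring there are pure powers, $F_a(\eta)=\eta^{k-1}F_a$, $F_{nn}(\eta)=\eta^{k-2}F_{nn}$, $F_{nnn}(\eta)=\eta^{k-3}F_{nnn}$ with $\eta=\frac{n-1}{R_1}$, while \descref{(A4)} makes $\Xi$ and its first derivatives at $\kapsurf{R}$ explicit: $\Xi(\kapsurf{R})=\sum_{a=0}^{n}c_a\binom{n-1}{a}R^{-a}$, $\frac{\partial\Xi}{\partial\kappa_n}(\kapsurf{R})=\sum_{a=1}^{n}c_a\binom{n-1}{a-1}R^{-(a-1)}$ and $\frac{\partial\Xi}{\partial\kappa_1}(\kapsurf{R})=\sum_{a=1}^{n}c_a\binom{n-2}{a-1}R^{-(a-1)}$. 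Substituting $R_1^{-a}=\frac{\pi^a}{d^a}\left(\frac{F_n}{(n-1)F_1}\right)^{a/2}$ and using the identity $\frac{(n-1)F_1}{R_1^2}=\frac{\pi^2F_n}{d^2}$ — the vanishing of the critical eigenvalue, exactly as it enters the derivation of (\ref{FCond}) — to clear the remaining powers of $\eta$ should reduce (\ref{FCond}) to (\ref{HomogCond}); note that the weighted-volume normalisation in (\ref{WVPCF}) is what accounts for the quotient by $\Xi(\kapsurf{R_1})$ in the first line of (\ref{HomogCond}).

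Finally I would explain why stability is asserted only at $m=1$. For $m\ge2$ the relation defining $R_m$ makes the $m'$-th eigenvalue at the cylinder of radius $R_m$ a positive multiple of $\frac{\pi^2F_n}{d^2}\left(m^2-m'^2\right)$, which is positive for $m'=1,\dots,m-1$ (here $F_n>0$ by \descref{(A2)}); since the branch bifurcating at $R_m$ is tangent to the $m$-th mode it cannot remove those unstable directions, so it is unstable. At $m=1$, however, the cylinder of radius $R_1$ has a one-dimensional kernel with all other modes strictly stable, so the principle of exchange of stability already used in the proof of Theorem \ref{MainThm} reduces the stability of the bifurcating branch to the sign condition (\ref{HomogCond}). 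I expect the only genuine difficulty to be computational: propagating the reparametrisation $\eta=\frac{n-1}{R}$ through (\ref{FCond}), expanding the elementary symmetric functions and their first derivatives at $\kapsurf{R_1}$ correctly in terms of $\binom{n-1}{a}$, $\binom{n-1}{a-1}$ and $\binom{n-2}{a-1}$, and checking that every $F_{nn}$-, $F_{nnn}$- and $k$-dependent contribution lands on the matching term of (\ref{HomogCond}) with the correct coefficient; all the conceptual ingredients are inherited from Theorem \ref{MainThm}.
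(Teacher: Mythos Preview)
Your proposal is correct and follows essentially the same route as the paper: the first assertion is obtained by rerunning the bifurcation argument of Theorem \ref{BifPoints} at each $\eta_m$ using the homogeneous form of the linearisation (this is exactly Corollary \ref{StatSolHomog} in the paper), and the stability clause is deduced from Theorem \ref{MainThm} by substituting $F_a(\eta)=\eta^{k-1}F_a$, $F_{nn}(\eta)=\eta^{k-2}F_{nn}$, $F_{nnn}(\eta)=\eta^{k-3}F_{nnn}$ into (\ref{FCond}); your added remark that the $m\ge2$ branches inherit $m-1$ unstable modes from the cylinder is the content of the paper's Remark \ref{MainRem}(b). One small slip: \descref{(A3)} concerns $\Xi$, not $F$, so homogeneity of $F$ does not force it to hold for every $\tilde{R}$---but this is harmless, since \descref{(A3)} at $\tilde{R}=R_m$ is part of the hypothesis.
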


Note that the first result of this Corollary is slightly stronger than what is immediately obtained from Theorem \ref{MainThm} as it proves the existence of a sequence of bifurcation points, see Corollary \ref{StatSolHomog}. However, the remainder of Corollary \ref{HomogCor} follows straight from Theorem \ref{MainThm} by using the homogeneous representations of $F_a(\eta)$, $F_{nn}(\eta)$ and $F_{nnn}(\eta)$.

\begin{corollary}\label{MainCor}
The cylinder of radius $\frac{d\sqrt{n-1}}{\pi}$ is part of a continuous family of CMC unduloids that satisfy the orthogonal boundary condition. Furthermore, under the VPMCF along with orthogonal boundary condition, the unduloids close to the cylinder are unstable stationary solutions in dimensions $2\leq n\leq10$, while for $n\geq11$ they are stable under axially symmetric, volume preserving perturbations.
\end{corollary}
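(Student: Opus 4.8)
The plan is to recognise the volume preserving mean curvature flow as the special case of (\ref{WVPCF}) with $F(\bm{\kappa})=\sum_{a=1}^n\kappa_a=E_1(\bm{\kappa})$ and $\Xi(\bm{\kappa})=1=E_0(\bm{\kappa})$, and then read everything off from Corollary \ref{HomogCor}. First I would record that this $F$ is homogeneous of degree $k=1$ and that \descref{(A1)}--\descref{(A5)*} hold for every $\tilde{R}>0$: $F$ and $\Xi$ are smooth and symmetric, $\frac{\partial F}{\partial\kappa_a}\equiv 1>0$, $\Xi\equiv 1>0$, and $\Xi=E_0$ forces $c_0=1$ and $c_a=0$ for $a\geq1$. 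Hence $F_1=F_n=1$ and $F_{nn}=F_{nnn}=0$, so $R_{crit}=\frac{d}{\pi}\sqrt{(n-1)F_1/F_n}=\frac{d\sqrt{n-1}}{\pi}$, and taking $m=1$ in Corollary \ref{HomogCor} already gives that this cylinder lies on a continuous family of stationary solutions meeting $\partial W$ orthogonally. To identify the family, note that a stationary solution of (\ref{WVPCF}) in the VPMCF case satisfies $F(\bm{\kappa})=H\equiv\mathrm{const}$, i.e.\ it is a CMC hypersurface; an axially symmetric CMC hypersurface of revolution meeting the two bounding hyperplanes orthogonally is, by the classical Delaunay-type ODE analysis, a piece of an unduloid, with the cylinder as the degenerate member. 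Thus the bifurcation branch of Corollary \ref{HomogCor} is a family of CMC unduloids, and since $\Xi\equiv1$ the weighted-volume preservation here is just volume preservation.

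For the stability dichotomy I would substitute these values into the left-hand side of (\ref{HomogCond}). Since $c_a=0$ for $a\geq1$ the numerator of the leading fraction vanishes and its denominator collapses to the single term $c_0\binom{n-1}{0}=1$, so that fraction equals $0$; and since $F_{nn}=F_{nnn}=0$ and $F_1=F_n=1$, every term carrying $F_{nn}$ or $F_{nnn}$ drops out, leaving (with $k=1$)
\begin{equation*}
-k^2+6n+6-(n-1)^2-(n-1)(k-3)=-n^2+10n+2.
\end{equation*}
So (\ref{HomogCond}) reads $-n^2+10n+2<0$, i.e.\ $n^2-10n-2>0$, i.e.\ $n>5+3\sqrt{3}\approx10.196$. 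For $n\geq11$ Corollary \ref{HomogCor} then applies verbatim and the unduloids near the cylinder are stable under axially symmetric, volume preserving perturbations.

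It remains to treat $2\leq n\leq10$, where instead $-n^2+10n+2>0$ (this downward parabola is positive on all of $(5-3\sqrt{3},5+3\sqrt{3})\supset[2,10]$; e.g.\ it equals $2$ at $n=10$ and $18$ at $n=2$, the latter recovering LeCrone's $n=2$ instability \cite{LeCronePre}). Here I would go into the Lyapunov--Schmidt / centre-manifold reduction underlying Theorem \ref{MainThm}: the sign change of the critical eigenvalue at $R_{crit}$ (assumption \descref{(A5)}) makes this a simple eigenvalue crossing, so near the bifurcation point the flow restricted to the admissible (axially symmetric, volume preserving) perturbations is governed by a scalar equation whose leading coefficient is a fixed positive multiple of the left-hand side of (\ref{FCond}), and by the principle of exchange of stability a strictly negative value gives a stable branch while a strictly positive one gives an unstable branch. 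Since $-n^2+10n+2$ is a nonzero integer for each $2\leq n\leq10$ the branch is non-degenerate, so the unduloids near the cylinder are unstable, which completes the proof.

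The only step requiring more than bookkeeping is this last one: Theorem \ref{MainThm} and Corollary \ref{HomogCor} state only the stable half of the dichotomy, so obtaining instability for $2\leq n\leq10$ forces one back into the reduced bifurcation equation to verify that a positive sign of (\ref{FCond}) genuinely yields a subcritical, hence unstable, branch, together with the non-degeneracy that licenses the exchange-of-stability argument. Matching the abstract bifurcation branch with the classical unduloid family is essentially the Delaunay ODE together with the orthogonality condition, but deserves to be spelled out.
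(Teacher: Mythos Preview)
Your proposal is correct and follows essentially the same route as the paper: Section~4 specializes to $F=E_1$, $\Xi=E_b$, computes $k=1$, $F_1=F_n=1$, $F_{nn}=F_{nnn}=0$, and reduces (\ref{HomogCond}) to $n^2-10n-2>0$ with crossover at $5+3\sqrt{3}$, exactly as you do for $b=0$. The instability half you correctly flag as missing from the statements of Theorem~\ref{MainThm} and Corollary~\ref{HomogCor} is supplied in the paper by Corollary~\ref{GenStable}, which carries out precisely the exchange-of-stability argument you sketch (perturbation of the critical eigenvalue via $\lambda''(0)$ and Lunardi's Theorem~9.1.7 for the backward orbit), so no new ideas are needed beyond citing that result.
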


\begin{remark}\label{MainRem}
\begin{enumerate}
	\item The first statement of the corollary is easily seen from the work of Delaunay \cite{Delaunay41} when $n=2$ and Hsiang and Yu \cite{Hsiang81} in higher dimensions. It was also shown in \cite{Hsiang81} that this family of CMC unduloids limits to spheres of radius $d$.
	\item It can also be seen from Corollary \ref{HomogCor} that the cylinders of radius $\frac{md\sqrt{n-1}}{\pi}$, $m\in\mathbb{N}$ are also part of a continuous family of CMC unduloids that satisfy the orthogonal boundary condition. However these cylinders, and nearby unduloids, are known to be linearly unstable under the flows.
	\item The stability of unduloids under VPMCF in high dimensions is an interesting result and it is not currently known why such a change in stability occurs at $n=11$.
	\item Note that Corollary \ref{MainCor} does not contradict the theorem in \cite{Athanassenas97} since for a critical cylinder $\frac{Vol}{d\left|\Omega\right|}=\frac{\sqrt{n-1}}{n\pi}<\frac{1}{6}$, so that (\ref{MariaAssump}) is not satisfied and will not be satisfied for unduloids close to this cylinder, or the hypersurfaces close to them.
\end{enumerate}
\end{remark}

In Section \ref{SecRedEq} we introduce the axially symmetric flow and remove the boundary conditions by considering it as an even function on the circle of radius $\frac{d}{\pi}$. We then introduce an invariant parameter, based on the weighted-volume, into the flow that enables us to reduce the problem to one on even, mean zero functions on the circle. Section \ref{SecBifAnal} analyses the reduced equation and proves Theorem \ref{MainThm}, while Section \ref{SecMVPMCF} considers the specific case of mixed-volume preserving mean curvature flow and in doing so proves Corollary \ref{MainCor}. In Section \ref{SecGeoCons} we provide an alternate construction of the family of CMC unduloids, using their representation from \cite{Hsiang81}. This is then used to provide more insight into the results of Section \ref{SecMVPMCF}.

The author is thankful to the University of Queensland and Dr Artem Pulemotov along with Monash University and Dr Maria Athanassenas for their support while preparing this paper. Part of this work was initiated during the author's PhD candidature at Monash University.


\section{Reducing the Equation}\label{SecRedEq}
We consider normal graphs over cylinders, hence $M^n=\Snm\times(0,d)$, and embeddings that are axially symmetric:
\begin{equation*}
\bm{X}_{\rho}\left(\bm{\theta},z\right)=\left(\rho(z)Y^{n-1}_{1,1}\left(\bm{\theta}\right),\ldots,\rho(z)Y^{n-1}_{1,n}\left(\bm{\theta}\right),z\right)
\end{equation*}
where $\rho:[0,d]\rightarrow\Rone^+$ and $Y^{n-1}_{1,a}:\Snm\rightarrow\Rone$, $1\leq a\leq n$, are the first order spherical harmonics on $\Snm$. We also restrict to functions so that the orthogonal boundary condition is satisfied, i.e. $\left.\frac{d\rho}{dz}\right|_{z=0,d}=0$. Up to a tangential diffeomorphism the flow (\ref{WVPCF}) is equivalent to a PDE on the function $\rho:[0,d]\times[0,T)\rightarrow\Rone^+$:
\begin{equation}\label{WVPCFGraph}
\begin{array}{c}\frac{\partial \rho}{\partial t}=\sqrt{1+\left(\frac{\partial\rho}{\partial z}\right)^2}\left(\frac{1}{\int_{[0,d]}\Xi\left(\kapsurf{\rho}\right)\,d\musurf{\rho}}\int_{[0,d]}F\left(\kapsurf{\rho}\right) \Xi\left(\kapsurf{\rho}\right)\,d\musurf{\rho}-F\left(\kapsurf{\rho}\right)\right),\\ \left.\frac{\partial\rho}{\partial z}\right|_{z=0,d}=0,\hspace{0.7cm}\rho(\cdot,0)=\rho_0,\end{array}
\end{equation}
where $\tilde{\bm{X}}_{0}=\bm{X}_{\rho_0}$, $\,d\musurf{\rho}=\rho^{n-1}\sqrt{1+\left(\frac{\partial\rho}{\partial z}\right)^2}\,dz$ and $\kapsurf{\rho}$ is the principal curvature vector of the embedding $\bm{X}_{\rho}$.

Throughout this paper we will be considering functions in the little-H\"older spaces, which are defined for an open set $U\subset\Rn$, $k\in\mathbb{N}$ and $\alpha\in(0,1)$ by
\begin{align*}
&h^{0,\alpha}\left(\bar{U}\right)=\left\{f\in C^{0,\alpha}\left(\bar{U}\right):\lim_{r\rightarrow0}\sup_{\stackrel{x,y\in\bar{U}}{0<|x-y|<r}}\frac{|f(x)-f(y)|}{|x-y|^{\alpha}}=0\right\},\\
h^{k,\alpha}\left(\bar{U}\right)=&\left\{f\in C^{k,\alpha}\left(\bar{U}\right):D^{\beta}f\in h^{0,\alpha}\left(\bar{U}\right)\text{ for all multi-indices }\beta\text{ with }|\beta|=k\right\},
\end{align*}
where $C^{k,\alpha}\left(\bar{U}\right)$ are the H\"older spaces. These spaces are equipped with the same norm as the H\"older spaces and can be extended to a manifold by means of an atlas. Importantly these spaces interpolate between themselves when using the continuous interpolation functor $(\cdot,\cdot)_{\theta}$, $\theta\in(0,1)$, \cite{Lunardi95}. That is, for $k,l\in\mathbb{N}$, $\alpha,\beta\in(0,1)$ with $k+\alpha>l+\beta$ then
\begin{equation}\label{lHInterp}
\left(h^{l,\beta},h^{k,\alpha}\right)_{\theta}=h^{\theta k+(1-\theta)l+\theta\alpha-(1-\theta)\beta}
\end{equation}
for $\theta\in(0,1)$ provided $\theta k+(1-\theta)l+\theta\alpha-(1-\theta)\beta\notin\mathbb{Z}$, see \cite{Guenther02}. Note that here we use the notation $h^{\sigma}=h^{\lfloor\sigma\rfloor,\sigma-\lfloor\sigma\rfloor}$ for $\sigma\in\Rone$. To restrict to functions that satisfy our boundary condition we define:
\begin{equation*}
h^{k,\alpha}_{B}\left(\overline{M}^n\right)=\left\{f\in h^{k,\alpha}\left(\overline{M}^n\right):\left.B[f]\right|_{\partial M^n}=0\right\},
\end{equation*}
for $k\in\mathbb{N}_0$ and $\alpha\in(0,1)$.

To remove the boundary condition we consider $h^{k,\alpha}$ functions on the circle of radius $\frac{d}{\pi}$, $\Sdp$, with an even symmetry; we denote this space of functions by $h^{k,\alpha}_e\left(\Sdp\right)$. Also, denote the even extension of a function $\rho$ from $[0,d]$ to $\Sdp$ by $u_{\rho}$, i.e.
\begin{equation*}
u_{\rho}(z)=\left\{\begin{array}{ll} \rho(z) & z\in[0,d]\\ \rho(-z) & z\in(-d,0)\end{array}\right..
\end{equation*}
Lastly, for an even function $u$, define $\left.u\right|_{[0,d]}$ to be the function on $[0,d]$ such that its even extension to the circle is $u$. Using this notation we see that solving (\ref{WVPCFGraph}) is then equivalent to solving
\begin{equation}\label{WVPCFCircle}
\frac{\partial u}{\partial t}=\sqrt{1+u'^2}\left(\frac{1}{\int_{\Sdp}\Xi\left(\kapsurf{u}\right)\,d\musurf{u}}\int_{\Sdp}F\left(\kapsurf{u}\right)\Xi\left(\kapsurf{u}\right)\,d\musurf{u}-F\left(\kapsurf{u}\right)\right),\hspace{0.5cm}u(\cdot,0)=u_{\rho_0},
\end{equation}
where $z$ is an arc-length variable, $\,d\musurf{u}=u^{n-1}\sqrt{1+u'^2}\,dz$ and $\kapsurf{u_{\rho}}$ is the even extension of $\kapsurf{\rho}$, i.e.
\begin{equation*}
\kapsurf{u}=\left(\frac{1}{u\sqrt{1+u'^2}},\ldots,\frac{1}{u\sqrt{1+u'^2}},-\frac{u''}{\left(1+u'^2\right)^{\frac{3}{2}}}\right),
\end{equation*}
where we use $'$ to denote derivatives with respect to $z$. The equivalence is easily seen because the right hand side of (\ref{WVPCFCircle}) is an even function when $u$ is even.

We now consider the function $Q$:
\begin{equation}\label{defQ}
Q(u):=c_0u^n+\left(\sum_{a=1}^{n}\frac{nc_a}{a}E_{a-1}\left(\kapsurf{u}\right)\right)u^{n-1}\sqrt{1+u'^2},
\end{equation}
and define
\begin{equation*}
\tilde{Q}(\eta):=Q\left(\frac{n-1}{\eta}\right)=\sum_{a=0}^{n}c_a(n-1)^{n-a}\binom{n}{a}\eta^{-(n-a)}.
\end{equation*}
Note that $\tilde{Q}'(\eta)=-\frac{n(n-1)^n}{\eta^{n+1}}\Xi\left(\kapsurf{\frac{n-1}{\eta}}\right)$ so the function is decreasing at $\eta$ whenever \descref{(A3)} holds for $\tilde{R}=\frac{n-1}{\eta}$, in particular we have a local inverse of $\tilde{Q}$ at these points. Corollary \ref{CorInvar} gives us that the weighted-volume
\begin{align*}
WVol(u)=&\int_{\Sdp}Q(u)\,dz\\
=&c_0\int_{\Sdp}u^n\,dz+\sum_{a=1}^n\frac{nc_a}{a}\int_{\Sdp}E_{a-1}\left(\kapsurf{u}\right)\,d\musurf{u}\\
=&\frac{2}{\omega_n}\left(c_0 V_{n+1}(u|_{[0,d]})+\sum_{a=1}^nc_a\binom{n+1}{a}V_{n+1-a}(u|_{[0,d]})\right)
\end{align*}
is an invariant of the flow. Here  we use $\omega_n$ to denote the volume of a unit $n$-ball and $V_b(\rho)$ to denote the mixed-volume of the hypersurface defined by $\bm{X}_{\rho}$. The mixed-volumes are usually only defined for convex hypersurfaces, however they can be extended to all hypersurfaces by using the formula:
\begin{equation}\label{MixedV}
V_{b}=\left\{\begin{array}{ll}\frac{1}{(n+1)\binom{n}{n-b}}\int_{M^n}E_{n-b}\left(\bm{\kappa}\right)\,d\mu & b=1,\ldots,n,\\ Vol & b=n+1.\end{array}\right.
\end{equation}
Note that in the case $c_a=\delta_{ab}$ for some $0\leq b\leq n-1$, $WVol(u)$ is (up to a positive multiple) the $(n+1-b)$\textsuperscript{th} mixed volume $V_{n+1-b}(u|_{[0,d]})$, the $b=n$ case is excluded as then \descref{(A3)} is not satisfied for any $\tilde{R}$.

We now follow \cite{LeCronePre} in introducing the function $\psi_{\eta_0}$ from an open set of $h^{k,\alpha}_{e,0}\left(\Sdp\right)\times\Rone^+$ to $h^{k,\alpha}_e\left(\Sdp\right)$, where
\begin{equation*}
h^{k,\alpha}_{e,0}\left(\Sdp\right)=\left\{f\in h^{k,\alpha}_{e}\left(\Sdp\right):\int_{\Sdp}f\,dz=0\right\}.
\end{equation*}
To simplify the notation we define the projection $P_0:h^{0,\alpha}_e\left(\Sdp\right)\rightarrow h^{0,\alpha}_{e,0}\left(\Sdp\right)$:
\begin{equation*}
P_0[u]=u-\fint_{\Sdp}u\,dz,
\end{equation*}
and the operators $L(u):=\sqrt{1+u'^2}$ and
\begin{equation*}
G(u)=L(u)\left(\frac{1}{\int_{\Sdp}\Xi\left(\kapsurf{u}\right)\,d\musurf{u}}\int_{\Sdp}F\left(\kapsurf{u}\right)\Xi\left(\kapsurf{u}\right)\,d\musurf{u} -F\left(\kapsurf{u}\right)\right).
\end{equation*}

\begin{lemma}\label{defpsi}
For each $\eta_0\in\Rone^+$ such that $\Xi\left(\kapsurf{\frac{n-1}{\eta_0}}\right)>0$ there exist $U_{\eta_0}$, $V_{\eta_0}$, neighbourhoods of $(0,\eta_0)\in h_{e,0}^{2,\alpha}\left(\Sdp\right)\times\Rone^+$ and $\frac{n-1}{\eta_0}\in h_e^{2,\alpha}\left(\Sdp\right)$ respectively, and a smooth diffeomorphism $\psi_{\eta_0}:U_{\eta_0}\rightarrow V_{\eta_0}$ such that:
\begin{itemize}
	\item $P_0\left[\psi_{\eta_0}\left(\bar{u},\eta\right)\right]=\bar{u}$ for all $\left(\bar{u},\eta\right)\in U_{\eta_0}$,
	\item $\fint_{\Sdp}Q\left(\psi_{\eta_0}\left(\bar{u},\eta\right)\right)\,dz=\tilde{Q}\left(\eta\right)$ for all $\left(\bar{u},\eta\right)\in U_{\eta_0}$,
	\item For $u\in V_{\eta_0}$, $\psi_{\eta_0}^{-1}(u)=\left(P_0[u],\tilde{Q}^{-1}\left(\fint_{\Sdp}Q(u)\,dz\right)\right)$. In particular $\psi_{\eta_0}\left(0,\eta\right)=\frac{n-1}{\eta}$ for all $\left(0,\eta\right)\in U_{\eta_0}$,
	\item $\Xi\left(\kapsurf{\psi_{\eta_0}(\bar{u},\eta)}\right)>0$ for all $\left(\bar{u},\eta\right)\in U_{\eta_0}$.
\end{itemize}
Furthermore:
\begin{equation}\label{Dpsi}
D_1\psi_{\eta_0}\left(\bar{u},\eta\right)[\bar{v}]=\bar{v}-\frac{\int_{\Sdp}\bar{v}\Xi\left(\kapsurf{\psi_{\eta_0}(\bar{u},\eta)}\right)\psi_{\eta_0}\left(\bar{u},\eta\right)^{n-1}\,dz}{\int_{\Sdp}\Xi\left(\kapsurf{\psi_{\eta_0}(\bar{u},\eta)}\right)\psi_{\eta_0}\left(\bar{u},\eta\right)^{n-1}\,dz}.
\end{equation}
\end{lemma}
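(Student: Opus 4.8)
The plan is to construct $\psi_{\eta_0}$ via the implicit function theorem applied to the constraint equation that fixes the mean-value of $Q$. Given $(\bar u,\eta)$, we seek a function $u\in h^{2,\alpha}_e(\Sdp)$ of the form $u=\bar u+s$ with $s$ a constant (so that $P_0[u]=\bar u$ is automatic, using $\int_{\Sdp}\bar u\,dz=0$), such that $\fint_{\Sdp}Q(\bar u+s)\,dz=\tilde Q(\eta)$. Define $\Phi:h^{2,\alpha}_{e,0}(\Sdp)\times\Rone\times\Rone^+\to\Rone$ by $\Phi(\bar u,s,\eta)=\fint_{\Sdp}Q(\bar u+s)\,dz-\tilde Q(\eta)$. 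At the base point $(\bar u,s,\eta)=(0,\frac{n-1}{\eta_0},\eta_0)$ we have $\bar u+s=\frac{n-1}{\eta_0}$, so $\fint Q=\tilde Q(\eta_0)$ and $\Phi=0$. The derivative in $s$ is $\partial_s\Phi=\fint_{\Sdp}DQ(\bar u+s)[1]\,dz$; evaluated at the base point, a direct computation from \eqref{defQ} (using that $\kapsurf{u}$ for the constant function $u\equiv\frac{n-1}{\eta_0}$ is $\kapsurf{\frac{n-1}{\eta_0}}$ and that the $z$-curvature term vanishes) gives $\partial_s\Phi = \tilde Q'(\eta)\big/\big(\text{Jacobian factor}\big)$, and more usefully one checks it equals $-\,n\,(\tfrac{n-1}{\eta_0})^{-1}\Xi(\kapsurf{\frac{n-1}{\eta_0}})\cdot(\tfrac{n-1}{\eta_0})^{n-1}<0$ by assumption \descref{(A3)}-type positivity. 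Hence $\partial_s\Phi\ne0$ at the base point, and the implicit function theorem (valid here since $Q$ is a smooth nonlinear Nemytskii-type operator on little-H\"older spaces, differentiable because the spaces are closed under multiplication and the nonlinearities are smooth) yields a smooth map $(\bar u,\eta)\mapsto s(\bar u,\eta)$ on a neighbourhood $U_{\eta_0}$ of $(0,\eta_0)$, with $s(0,\eta_0)=\frac{n-1}{\eta_0}$. Setting $\psi_{\eta_0}(\bar u,\eta)=\bar u+s(\bar u,\eta)$ then gives a smooth map, and shrinking $U_{\eta_0}$ if necessary (by continuity, since $\Xi(\kapsurf{\frac{n-1}{\eta_0}})>0$ is an open condition on $h^{2,\alpha}_e$) ensures the last bullet, $\Xi(\kapsurf{\psi_{\eta_0}(\bar u,\eta)})>0$.

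Next I would verify the claimed inverse formula, which simultaneously shows $\psi_{\eta_0}$ is a diffeomorphism onto its image $V_{\eta_0}$. Given $u$ near $\frac{n-1}{\eta_0}$ in $h^{2,\alpha}_e(\Sdp)$, the pair $\big(P_0[u],\,\tilde Q^{-1}(\fint_{\Sdp}Q(u)\,dz)\big)$ is well-defined: $\tilde Q$ has a smooth local inverse near $\eta_0$ because $\tilde Q'(\eta_0)=-\tfrac{n(n-1)^n}{\eta_0^{n+1}}\Xi(\kapsurf{\frac{n-1}{\eta_0}})\ne0$, as computed in the text just before the lemma, and $\fint_{\Sdp}Q(\tfrac{n-1}{\eta_0})\,dz=\tilde Q(\eta_0)$ lies in its range. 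One checks $\psi_{\eta_0}^{-1}\circ\psi_{\eta_0}=\mathrm{id}$ directly: $P_0[\psi_{\eta_0}(\bar u,\eta)]=\bar u$ by construction, and $\tilde Q^{-1}(\fint Q(\psi_{\eta_0}(\bar u,\eta))\,dz)=\tilde Q^{-1}(\tilde Q(\eta))=\eta$ by the defining constraint. The reverse composition follows because, for $u\in V_{\eta_0}$, $\psi_{\eta_0}(P_0[u],\tilde Q^{-1}(\fint Q(u)))$ is the unique function with zero-projection $P_0[u]$ differing from it by a constant and having the same $\fint Q$-value as $u$ — but $u$ itself has these properties, and uniqueness of $s$ from the implicit function theorem forces equality (after possibly shrinking $V_{\eta_0}$ so that the constant $u-P_0[u]$ stays in the $s$-neighbourhood). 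In particular plugging $\bar u=0$ gives $\psi_{\eta_0}(0,\eta)=\frac{n-1}{\eta}$, the second-last bullet.

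Finally, for the derivative formula \eqref{Dpsi}, differentiate the identity $\fint_{\Sdp}Q(\psi_{\eta_0}(\bar u,\eta))\,dz=\tilde Q(\eta)$ in the $\bar u$-direction along $\bar v\in h^{2,\alpha}_{e,0}$: since $D_1\psi_{\eta_0}[\bar v]=\bar v+(\text{constant})$ with the constant $=D_1 s[\bar v]$, and $\int_{\Sdp}\bar v\,dz=0$, one gets $\fint_{\Sdp}DQ(\psi_{\eta_0})[\bar v + D_1 s[\bar v]]\,dz=0$. Computing $DQ(u)[\phi]$ from \eqref{defQ} and using the structure of $\kapsurf{u}$, the leading contribution simplifies — by the same elementary-symmetric-function identities that give $\tilde Q'$ — so that $\fint_{\Sdp}DQ(u)[\phi]\,dz$ reduces to (a nonzero multiple of) $\int_{\Sdp}\phi\,\Xi(\kapsurf{u})\,u^{n-1}\,dz$ when $\phi$ ranges over enough test functions; solving for the constant $D_1 s[\bar v]$ yields exactly $-\,\big(\int_{\Sdp}\bar v\,\Xi(\kapsurf{\psi_{\eta_0}})\psi_{\eta_0}^{n-1}\,dz\big)\big/\big(\int_{\Sdp}\Xi(\kapsurf{\psi_{\eta_0}})\psi_{\eta_0}^{n-1}\,dz\big)$, which is \eqref{Dpsi}. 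The main obstacle I anticipate is this last computation: one must show that the full Fr\'echet derivative $DQ(u)$, which a priori involves $u'$, $u''$ and derivatives of $\bar v$ through the curvature terms $E_{a-1}(\kapsurf{u})$, collapses after integration over $\Sdp$ to the clean curvature-free expression $\int \phi\,\Xi(\kapsurf u)u^{n-1}\,dz$ — this is the content of the weighted-volume being a first integral (Corollary \ref{CorInvar}) and requires a careful integration-by-parts/divergence-structure argument on the circle rather than a routine differentiation.
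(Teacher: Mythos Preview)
Your approach is correct and essentially the same as the paper's: both apply the implicit function theorem to the constraint $\fint_{\Sdp}Q(u)\,dz=\tilde Q(\eta)$ and both obtain the derivative formula by differentiating this constraint and invoking the identity $\int_{\Sdp}DQ(u)[v]\,dz=n\int_{\Sdp}v\,\Xi(\kapsurf{u})u^{n-1}\,dz$ (equation \eqref{QIdent}, proved in Appendix~\ref{AppInv}), which is exactly the integration-by-parts fact you flag as the main obstacle. The only cosmetic difference is that the paper sets up a Banach-space IFT for the full function $u\in h^{2,\alpha}_e$ via $\Phi(u,\bar u,\eta)=(P_0[u]-\bar u,\ \fint Q(u)-\tilde Q(\eta))$, whereas you more economically solve a scalar IFT for the constant offset $s$; your computation of $\partial_s\Phi$ has a stray sign and factor (by \eqref{QIdent} with $v=1$ it equals $n\,\Xi(\kapsurf{\frac{n-1}{\eta_0}})(\tfrac{n-1}{\eta_0})^{n-1}>0$), but the needed nonvanishing is unaffected.
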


\begin{proof}
We consider the operator
\begin{equation*}
\Phi\left(u,\bar{u},\eta\right)=\left(P_0[u]-\bar{u},\fint_{\Sdp}Q(u)\,dz-\tilde{Q}\left(\eta\right)\right),
\end{equation*}
which is a generalisation of the one in \cite{LeCronePre}, and note that $\Phi\left(\frac{n-1}{\eta_0},0,\eta_0\right)=(0,0)$. We linearise with respect to the $u$ function around the point $\left(\frac{n-1}{\eta_0},0,\eta_0\right)$ and act on $v\in h_e^{2,\alpha}\left([0,d]\right)$:
\begin{align*}
D_1\Phi\left(\frac{n-1}{\eta_0},0,\eta_0\right)[v]=&\left(P_0[v],\fint_{\Sdp}DQ\left(\frac{n-1}{\eta_0}\right)[v]\,dz\right)\\
=&\left(P_0[v],n\fint_{\Sdp}v\Xi\left(\kapsurf{\frac{n-1}{\eta_0}}\right)\left(\frac{n-1}{\eta_0}\right)^{n-1}\,dz\right),
\end{align*}
where we have used (\ref{QIdent}). Since $\Xi\left(\kapsurf{\frac{n-1}{\eta_0}}\right)\neq0$ this operator has trivial kernel. Therefore by the implicit function theorem we obtain the existence of $\psi_{\eta_0}$ locally around $(0,\eta_0)\in h_{e,0}^{2,\alpha}\left(\Sdp\right)\times\Rone^+$ such that it satisfies the first three dot points and, if necessary, we shrink $U_{\eta_0}$ to ensure the fourth holds.

To obtain the linearisation of $\psi_{\eta_0}$ we consider
\begin{equation}\label{defphi}
\phi(\bar{u},\eta)=\Phi\left(\psi_{\eta_0}\left(\bar{u},\eta\right),\bar{u},\eta\right)=\left(P_0\left[\psi_{\eta_0}\left(\bar{u},\eta\right)\right]-\bar{u},\fint_{\Sdp}Q\left(\psi_{\eta_0}\left(\bar{u},\eta\right)\right)\,dz-\tilde{Q}\left(\eta\right)\right).
\end{equation}
By linearising with respect to $\bar{u}$ we obtain:
\begin{align*}
D_1\phi(\bar{u},\eta)[\bar{v}]=&\left(P_0\left[D_1\psi_{\eta_0}(\bar{u},\eta)[\bar{v}]\right]-\bar{v},\fint_{\Sdp}DQ\left(\psi_{\eta_0}(\bar{u},\eta)\right)\left[D_1\psi_{\eta_0}(\bar{u},\eta)[\bar{v}]\right]\,dz\right)\\
=&\left(P_0\left[D_1\psi_{\eta_0}(\bar{u},\eta)[\bar{v}]\right]-\bar{v},n\fint_{\Sdp}D_1\psi_{\eta_0}(\bar{u},\eta)[\bar{v}]\Xi\left(\kapsurf{\psi_{\eta_0}(\bar{u},\eta)}\right)\psi_{\eta_0}(\bar{u},\eta)^{n-1}\,dz\right),
\end{align*}
where we have used (\ref{QIdent}) again. By the properties of $\psi_{\eta_0}$ we have that $\phi(\bar{u},\eta)=(0,0)$ for all $\left(\bar{u},\eta\right)\in U_{\eta_0}$, hence we obtain:
\begin{equation*}
\left(P_0\left[D_1\psi_{\eta_0}(\bar{u},\eta)[\bar{v}]\right]-\bar{v},n\fint_{\Sdp}D_1\psi_{\eta_0}(\bar{u},\eta)[\bar{v}]\Xi\left(\kapsurf{\psi_{\eta_0}(\bar{u},\eta)}\right)\psi_{\eta_0}(\bar{u},\eta)^{n-1}\,dz\right)=(0,0).
\end{equation*}
From the first of these equations we obtain $D_1\psi_{\eta_0}(\bar{u},\eta)[\bar{v}]=\bar{v}+\fint_{\Sdp}D_1\psi_{\eta_0}(\bar{u},\eta)[\bar{v}]\,dz$, so by substituting this into the second equation we obtain:
\begin{equation*}
\fint_{\Sdp}\bar{v}\Xi\left(\kapsurf{\psi_{\eta_0}(\bar{u},\eta)}\right)\psi_{\eta_0}(\bar{u},\eta)^{n-1}\,dz +\fint_{\Sdp}\Xi\left(\kapsurf{\psi_{\eta_0}(\bar{u},\eta)}\right)\psi_{\eta_0}(\bar{u},\eta)^{n-1}\,dz\fint_{\Sdp}D_1\psi_{\eta_0}(\bar{u},\eta)[\bar{v}]\,dz=0.
\end{equation*}
Therefore
\begin{equation*}
\fint_{\Sdp}D_1\psi_{\eta_0}(\bar{u},\eta)[\bar{v}]\,dz=-\frac{\fint_{\Sdp}\bar{v}\Xi\left(\kapsurf{\psi_{\eta_0}(\bar{u},\eta)}\right)\psi_{\eta_0}(\bar{u},\eta)^{n-1}\,dz}{\fint_{\Sdp}\Xi\left(\kapsurf{\psi_{\eta_0}(\bar{u},\eta)}\right)\psi_{\eta_0}(\bar{u},\eta)^{n-1}\,dz},
\end{equation*}
and the result follows by substituting this into $D_1\psi_{\eta_0}(\bar{u},\eta)[\bar{v}]=\bar{v}+\fint_{\Sdp}D_1\psi_{\eta_0}(\bar{u},\eta)[\bar{v}]\,dz$.
\end{proof}

We also note that from (\ref{defphi}) and $\phi(\bar{u},\eta)=(0,0)$ we have the representation $\psi_{\eta_0}(\bar{u},\eta)=\bar{u}+\fint_{\Sdp}\psi_{\eta_0}(\bar{u},\eta)\,dz$; in particular this means that $\frac{d\psi_{\eta_0}(\bar{u},\eta)}{dz}=\frac{d\bar{u}}{dz}$. We are now able to introduce an equivalent equation on the reduced space $h_{e,0}^{2,\alpha}\left(\Sdp\right)$.
\begin{corollary}\label{EquivEq}
Let $\eta_0\in\Rone^+$ be such that \descref{(A1)}-\descref{(A4)} are satisfied with $\tilde{R}=\frac{n-1}{\eta_0}$, then the flow
\begin{equation}\label{RedWVPCF}
\frac{\partial\bar{u}}{\partial t}=\bar{G}_{\eta_0}(\bar{u},\eta):=P_0\left[G\left(\psi_{\eta_0}(\bar{u},\eta)\right)\right],\ \ \bar{u}(\cdot,0)=\bar{u}_0,
\end{equation}
for some fixed $\eta$ such that $(\bar{u}_0,\eta)\in U_{\eta_0}$, is equivalent to the flow (\ref{WVPCFCircle}).

That is, if $\bar{u}$ is a solution to (\ref{RedWVPCF}) with initial condition $\bar{u}_0$ then $\psi_{\eta_0}(\bar{u},\eta)$ is a solution to (\ref{WVPCFCircle}) with initial condition $\psi_{\eta_0}(\bar{u}_0,\eta)$. Conversely, if $u$ is a solution to (\ref{WVPCFCircle}) with initial condition $u_0$ such that $u(t)\in V_{\eta_0}$ for each $t$, then $P_0[u]$ is a solution to (\ref{RedWVPCF}) with $\eta=\tilde{Q}^{-1}\left(\fint_{\Sdp}Q(u_0)\,dz\right)$.
\end{corollary}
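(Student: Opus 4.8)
The plan is to deduce the equivalence from the chain rule, the explicit derivative formula (\ref{Dpsi}), and the conservation law built into $\psi_{\eta_0}$. For the forward direction I would take a solution $\bar{u}$ of (\ref{RedWVPCF}) on a time interval on which $(\bar{u}(\cdot,t),\eta)$ stays in $U_{\eta_0}$, and set $u:=\psi_{\eta_0}(\bar{u},\eta)$. Differentiating in $t$ and inserting (\ref{RedWVPCF}),
$$\frac{\partial u}{\partial t}=D_1\psi_{\eta_0}(\bar{u},\eta)\bigl[\bar{G}_{\eta_0}(\bar{u},\eta)\bigr]=D_1\psi_{\eta_0}(\bar{u},\eta)\bigl[P_0[G(u)]\bigr],$$
so everything reduces to showing the right-hand side equals $G(u)$; the initial condition $u(\cdot,0)=\psi_{\eta_0}(\bar{u}_0,\eta)$ is immediate from the definition of $u$.

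The one computation I would isolate is the vanishing integral
$$\int_{\Sdp}G(u)\,\Xi(\kapsurf{u})\,u^{n-1}\,dz=0,$$
which is simply the infinitesimal form of weighted-volume conservation: since $\,d\musurf{u}=u^{n-1}L(u)\,dz$, multiplying the definition of $G$ by $\Xi(\kapsurf{u})u^{n-1}$ and integrating turns the factor $u^{n-1}L(u)\,dz$ into $\,d\musurf{u}$, so the average occurring in $G$ is exactly $\int F\Xi\,d\musurf{u}\big/\int\Xi\,d\musurf{u}$ and the two contributions cancel. (Alternatively this follows from Corollary \ref{CorInvar} together with (\ref{QIdent}).) Feeding $\bar{v}=P_0[G(u)]=G(u)-\fint_{\Sdp}G(u)\,dz$ into (\ref{Dpsi}), the two constants produced by $P_0$ cancel, and the surviving quotient has numerator $\int_{\Sdp}G(u)\Xi(\kapsurf{u})u^{n-1}\,dz=0$; hence $D_1\psi_{\eta_0}(\bar{u},\eta)[P_0[G(u)]]=G(u)$, so $u$ solves (\ref{WVPCFCircle}) with initial data $\psi_{\eta_0}(\bar{u}_0,\eta)$.

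For the converse I would start from a solution $u$ of (\ref{WVPCFCircle}) with $u(\cdot,t)\in V_{\eta_0}$ for all $t$. By Corollary \ref{CorInvar} the quantity $\int_{\Sdp}Q(u(\cdot,t))\,dz$, hence its average, is constant in $t$, so the formula for $\psi_{\eta_0}^{-1}$ in Lemma \ref{defpsi} gives
$$\psi_{\eta_0}^{-1}\bigl(u(\cdot,t)\bigr)=\Bigl(P_0[u(\cdot,t)],\ \tilde{Q}^{-1}\bigl(\textstyle\fint_{\Sdp}Q(u(\cdot,t))\,dz\bigr)\Bigr)=\bigl(P_0[u(\cdot,t)],\eta\bigr),\qquad\eta:=\tilde{Q}^{-1}\bigl(\textstyle\fint_{\Sdp}Q(u_0)\,dz\bigr),$$
with $\eta$ independent of $t$ and $(P_0[u_0],\eta)\in U_{\eta_0}$. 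Applying $\psi_{\eta_0}$ recovers $u=\psi_{\eta_0}(P_0[u],\eta)$, and then, since $P_0$ is a bounded linear operator,
$$\frac{\partial}{\partial t}P_0[u]=P_0\!\left[\frac{\partial u}{\partial t}\right]=P_0[G(u)]=P_0\bigl[G\bigl(\psi_{\eta_0}(P_0[u],\eta)\bigr)\bigr]=\bar{G}_{\eta_0}\bigl(P_0[u],\eta\bigr),$$
with initial value $P_0[u_0]$, which is (\ref{RedWVPCF}).

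I expect the only genuine content to be the vanishing integral $\int_{\Sdp}G(u)\Xi(\kapsurf{u})u^{n-1}\,dz=0$ and the way it cancels the correction term in (\ref{Dpsi}) --- this is precisely where the weighted-volume normalisation encoded in $\psi_{\eta_0}$ is matched to the structure of the flow. The rest is bookkeeping: verifying that all arguments remain inside the charts $U_{\eta_0}$, $V_{\eta_0}$ on which Lemma \ref{defpsi} and (\ref{Dpsi}) are valid --- which is why the converse carries the hypothesis $u(\cdot,t)\in V_{\eta_0}$ and the forward statement fixes $\eta$ with $(\bar{u}_0,\eta)\in U_{\eta_0}$ --- and noting that $\bar{G}_{\eta_0}$ automatically takes values in the mean-zero space $h^{0,\alpha}_{e,0}(\Sdp)$ because of the outer $P_0$.
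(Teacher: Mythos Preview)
Your proposal is correct and follows essentially the same route as the paper: both directions hinge on the chain rule together with (\ref{Dpsi}), the invariance of $\fint_{\Sdp}Q(u)\,dz$, and the vanishing of $\int_{\Sdp}G(u)\Xi(\kapsurf{u})u^{n-1}\,dz$, which the paper verifies by the same direct expansion you describe. Your remark that this vanishing is the infinitesimal form of weighted-volume conservation is a nice gloss the paper leaves implicit.
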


\begin{proof}
Firstly let $u(t)$ be a solution to (\ref{WVPCFCircle}) in $V_{\eta_0}$ for all $t\in[0,\delta)$. Set $\eta=\tilde{Q}^{-1}\left(\fint_{\Sdp}Q(u_0)\,dz\right)$ and since $\fint_{\Sdp}Q(u)\,dz$ is an invariant of the flow we have
\begin{equation*}
\eta=\tilde{Q}^{-1}\left(\fint_{\Sdp}Q(u(t))\,dz\right),
\end{equation*}
for all $t\in[0,\delta)$. So by Lemma \ref{defpsi} we have that $u(t)=\psi_{\eta_0}\left(P_0[u(t)],\eta\right)$ for all $t\in[0,\delta)$. Substituting this into (\ref{WVPCFCircle}) gives
\begin{equation*}
\frac{\partial u}{\partial t}=G\left(\psi_{\eta_0}\left(P_0[u(t)],\eta\right)\right),\ t\in[0,\delta),
\end{equation*}
and by taking the projection of this we find that $P_0[u]$ is a solution to (\ref{RedWVPCF}), with parameter $\eta$ and initial condition $\bar{u}_0=P_0[u_0]$, for all $t\in[0,\delta)$.

Now let $\bar{u}(t)$, for $t\in[0,\delta)$, be a solution to (\ref{RedWVPCF}) with parameter $\eta$ and initial condition $\bar{u}_0$. We let $u(t)=\psi_{\eta_0}\left(\bar{u}(t),\eta\right)$ and compute it's derivative using (\ref{Dpsi}):
\begin{align*}
\frac{\partial u}{\partial t}=&D_1\psi_{\eta_0}\left(\bar{u},\eta\right)\left[\frac{\partial\bar{u}}{\partial t}\right]\\
=&P_0\left[G(u)\right] -\frac{\int_{\Sdp}P_0\left[G(u)\right]\Xi\left(\kapsurf{u}\right)u^{n-1}\,dz}{\int_{\Sdp}\Xi\left(\kapsurf{u}\right)u^{n-1}\,dz}\\
=&G(u) -\frac{\int_{\Sdp}G(u)\Xi\left(\kapsurf{u}\right)u^{n-1}\,dz}{\int_{\Sdp}\Xi\left(\kapsurf{u}\right)u^{n-1}\,dz}\\
=&G(u) -\frac{1}{\int_{\Sdp}\Xi\left(\kapsurf{u}\right)u^{n-1}\,dz}\int_{\Sdp}\left(\frac{\int_{\Sdp}F\left(\kapsurf{u}\right) \Xi\left(\kapsurf{u}\right)\,d\musurf{u}}{\int_{\Sdp}\Xi\left(\kapsurf{u}\right)\,d\musurf{u}} -F\left(\kapsurf{u}\right)\right)\Xi\left(\kapsurf{u}\right)\,d\musurf{u}\\
=&G(u).
\end{align*}
Therefore $u(t)$ solves (\ref{WVPCFCircle}) for $t\in[0,\delta)$ with initial condition $u_0=\psi_{\eta_0}\left(\bar{u}_0,\eta\right)$.
\end{proof}


\section{Non-trivial Stationary Solutions}\label{SecBifAnal}
The aim of this section is to prove Theorem \ref{MainThm}. We do it in two parts. We will first prove the existence of the stationary solutions, then determine the criteria for their stability. We will perform the analysis on the reduced equation (\ref{RedWVPCF}) then transfer the results to the full flow in Corollaries \ref{StatSol} and \ref{GenStable}, which together give Theorem \ref{MainThm}. We now fix $\eta_0=\frac{n-1}{R_{crit}}$.

\begin{theorem}\label{BifPoints}
Assume \descref{(A1)}-\descref{(A5)} are satisfied for $\tilde{R}=R_{crit}$. Then $(0,\eta_0)$ is a bifurcation point of $\bar{G}_{\eta_0}\left(\bar{u},\eta\right)=0$.

That is, there exists a curve $\left\{\left(\tilde{u}(s),\eta(s)\right):|s|<\delta\right\}\subset U_{\eta_0}$, $\delta>0$, such that $\left(\tilde{u}(0),\eta(0)\right)=(0,\eta_0)$, $\tilde{u}(s)\neq0$ for $s\neq0$, and $\bar{G}_{\eta_0}\left(\tilde{u}(s),\eta(s)\right)=0$ for all $|s|<\delta$. These are also the only non-trivial solutions to $\bar{G}_{\eta_0}\left(\bar{u},\eta\right)=0$ in a neighbourhood of $(0,\eta_0)$.
\end{theorem}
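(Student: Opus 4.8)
The plan is to apply the Crandall--Rabinowitz bifurcation theorem to the map $\bar{G}_{\eta_0}(\bar{u},\eta)$ at the point $(0,\eta_0)$, treating $\eta$ as the bifurcation parameter. This reduces the theorem to verifying the hypotheses of that theorem: (i) $\bar{G}_{\eta_0}(0,\eta)=0$ for all $\eta$ near $\eta_0$; (ii) the linearisation $D_1\bar{G}_{\eta_0}(0,\eta_0)$ is a Fredholm operator of index zero with one-dimensional kernel spanned by some $\tilde{u}_*$; (iii) the transversality condition $D_\eta D_1\bar{G}_{\eta_0}(0,\eta_0)[\tilde{u}_*]\notin\mathrm{Range}\,D_1\bar{G}_{\eta_0}(0,\eta_0)$; and the uniqueness of the nearby nontrivial branch follows from the same theorem. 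Item (i) is immediate, since $\psi_{\eta_0}(0,\eta)=\frac{n-1}{\eta}$ is a cylinder, hence a stationary solution of (\ref{WVPCFCircle}), so $G(\psi_{\eta_0}(0,\eta))=0$ and thus $\bar{G}_{\eta_0}(0,\eta)=0$.

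The substantive computation is the linearisation. First I would use the chain rule together with (\ref{Dpsi}) evaluated at $(\bar u,\eta)=(0,\eta_0)$, where $\psi_{\eta_0}(0,\eta_0)=\frac{n-1}{\eta_0}=R_{crit}$, to write $D_1\bar{G}_{\eta_0}(0,\eta_0)[\bar v]=P_0\big[DG(R_{crit})[D_1\psi_{\eta_0}(0,\eta_0)[\bar v]]\big]$. Since $D_1\psi_{\eta_0}(0,\eta_0)$ differs from the identity only by a constant (a multiple of $\fint \bar v\,dz$, which vanishes on $h^{2,\alpha}_{e,0}$ for the leading term, but more carefully one must track the weighted mean), and since $DG$ at a cylinder is (up to the constant-subtraction built into $G$) the Jacobi-type operator for the flow, this produces a second-order linear ODE operator on $\Sdp$ with constant coefficients: concretely something of the form $\bar v\mapsto a\bar v'' + b\bar v$ projected by $P_0$, where $a=F_n(\eta_0)\,(\text{positive})$ and $b$ is a combination involving $F_1(\eta_0)$, $F_n(\eta_0)$, $(n-1)$, $R_{crit}$ and $d$. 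The eigenfunctions are $\cos(\pi k z/d)$, $k\in\mathbb{N}$ (the even functions on $\Sdp$ with zero mean). Assumption \descref{(A5)} is exactly the statement that the relevant eigenvalue --- the one for $k=1$ --- changes sign as $R$ (equivalently $\eta$) crosses $R_{crit}$; this is what makes the kernel one-dimensional, spanned by $\tilde{u}_* = \cos(\pi z/d)$, and simultaneously furnishes the transversality condition, because $\partial_\eta$ of the $k=1$ eigenvalue is nonzero there (its sign-change is strict, by \descref{(A5)}), and $\cos(\pi z/d)$ is self-reproducing under the operator so it cannot lie in the range, which is the $L^2$-orthogonal complement (within $h^{0,\alpha}_{e,0}$) of $\cos(\pi z/d)$.

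Fredholmness of index zero is standard for such an elliptic operator on the compact manifold $\Sdp$ between little-Hölder spaces (it is a compact perturbation of an invertible operator), so hypothesis (ii) is complete once the kernel count is done. The final clause --- that these are the \emph{only} nontrivial zeros near $(0,\eta_0)$ --- is part of the conclusion of the Crandall--Rabinowitz theorem, which gives a local neighbourhood in which the solution set of $\bar{G}_{\eta_0}=0$ is exactly the union of the trivial line $\{(0,\eta)\}$ and the bifurcating curve $\{(\tilde u(s),\eta(s))\}$.

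The main obstacle I anticipate is bookkeeping in the linearisation: correctly computing $DG$ at a cylinder --- in particular handling the nonlocal averaging term $\tfrac{1}{\int\Xi\,d\mu}\int F\Xi\,d\mu$ in $G$, whose derivative at a cylinder contributes through the variation of $F(\kapsurf{u})$ against the (constant) background --- and combining it with the correction term in (\ref{Dpsi}) so that the nonlocal pieces either cancel or collapse into the $P_0$-projection. One must verify that after projection the operator is genuinely the clean constant-coefficient ODE claimed, and pin down the coefficient $b$ precisely enough to see that (\ref{RCond}) being an equality at $R=R_{crit}$ is exactly the vanishing of the $k=1$ eigenvalue. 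Once that identification is made, \descref{(A5)} hands over both the kernel dimension and transversality, and the abstract theorem closes the argument.
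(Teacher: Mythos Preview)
Your proposal is correct and follows essentially the same route as the paper: apply Crandall--Rabinowitz (the paper cites it as Theorem~I.5.1 in Kielh\"ofer), compute $D_1\bar{G}_{\eta_0}(0,\eta)$ explicitly as the constant-coefficient operator $\bar v\mapsto F_n(\eta)\bar v''+\frac{\eta^2F_1(\eta)}{n-1}\bar v$ on $h^{2,\alpha}_{e,0}(\Sdp)$, read off the one-dimensional kernel $\mathrm{span}\{\cos(\pi z/d)\}$ at $\eta_0$, and extract transversality from assumption \descref{(A5)} via the strict sign change of the $k=1$ eigenvalue. The bookkeeping concern you flag---that the nonlocal averaging in $G$ and the correction in $D_1\psi_{\eta_0}$ collapse under $P_0$ to leave exactly this clean local operator---is handled in the paper precisely as you anticipate.
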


\begin{proof}
We calculate the linearisation of $\bar{G}_{\eta_0}(\bar{u},\eta)=P_0\left[G\left(\psi_{\eta_0}(\bar{u},\eta)\right)\right]$ with respect to $\bar{u}$ and act on $\bar{v}\in h_{e,0}^{2,\alpha}\left([0,d]\right)$:
\begin{equation}\label{D1Gbar}
D_1\bar{G}_{\eta_0}(\bar{u},\eta)[\bar{v}]=P_0\left[DG\left(\psi_{\eta_0}(\bar{u},\eta)\right)\left[D_1\psi_{\eta_0}(\bar{u},\eta)[\bar{v}]\right]\right].
\end{equation}

To simplify the calculation of $DG(u)$ we set $W(u)=\frac{F\left(\kapsurf{u}\right)\Xi\left(\kapsurf{u}\right)u^{n-1}L(u)}{\int_{\Sdp}\Xi\left(\kapsurf{u}\right)u^{n-1}L(u)\,dz}$ and use that $DL(u)[v]=\frac{u'v'}{L(u)}$:
\begin{align}\label{DG}
DG(u)[v]=&\frac{u'v'}{L(u)}\left(\int_{\Sdp}W(u)\,dz-F\left(\kapsurf{u}\right)\right)+L(u)\left(\int_{\Sdp}DW(u)[v]\,dz-D\left(F\left(\kapsurf{u}\right)\right)[v]\right)\nonumber\\
=&\frac{u'v'G(u)}{L(u)^2}+L(u)\left(\int_{\Sdp}DW(u)[v]\,dz-D\left(F\left(\kapsurf{u}\right)\right)[v]\right).
\end{align}

If we linearise $W$ and evaluate at $u=\frac{n-1}{\eta}$ we obtain
\begin{equation*}
\int_{\Sdp}DW\left(\frac{n-1}{\eta}\right)[v]\,dz=\fint_{\Sdp}\left.D\left(F\left(\kapsurf{u}\right)\right)\right|_{u=\frac{n-1}{\eta}}[v]\,dz.
\end{equation*}
Substituting this into (\ref{DG}):
\begin{equation}\label{DG0}
DG\left(\frac{n-1}{\eta}\right)[v]=\fint_{\Sdp}\left.D\left(F\left(\kapsurf{u}\right)\right)\right|_{u=\frac{n-1}{\eta}}[v]\,dz -\left.D\left(F\left(\kapsurf{u}\right)\right)\right|_{u=\frac{n-1}{\eta}}[v],
\end{equation}
Therefore using (\ref{Dpsi}) and (\ref{D1Gbar}) the linearisation of $\bar{G}_{\eta_0}$ at the point $(0,\eta)$ is
\begin{align}\label{D1Gbar0}
D_1\bar{G}_{\eta_0}(0,\eta)[\bar{v}]=&DG\left(\frac{n-1}{\eta}\right)[\bar{v}]\\
=&\fint_{\Sdp}\left.D\left(F\left(\kapsurf{u}\right)\right)\right|_{u=\frac{n-1}{\eta}}[\bar{v}]\,dz -\left.D\left(F\left(\kapsurf{u}\right)\right)\right|_{u=\frac{n-1}{\eta}}[\bar{v}]\nonumber.
\end{align}

To calculate the linearisation of $F\left(\kapsurf{u}\right)$ we use its symmetries:
\begin{equation}\label{DF}
D\left(F\left(\kapsurf{u}\right)\right)[v]=(n-1)\frac{\partial F}{\partial\kappa_1}\left(\kapsurf{u}\right)D\kappa_1(u)[v] +\frac{\partial F}{\partial\kappa_n}\left(\kapsurf{u}\right)D\kappa_n(u)[v],
\end{equation}
where
\begin{equation*}
\kappa_1(u)=\frac{1}{uL(u)},\ \ \ \kappa_n(u)=-\frac{u''}{L(u)^3}.
\end{equation*}
Taking the linearisation of the curvatures:
\begin{equation}\label{Dkap}
D\kappa_1(u)[v]=\frac{-v}{u^2L(u)}-\frac{u'v'}{uL(u)^3},\ \ \ D\kappa_n(u)[v]=-\frac{v''}{L(u)^3}+\frac{3u''u'v'}{L(u)^5},
\end{equation}
therefore
\begin{equation}\label{Dkap0}
D\kappa_1\left(\frac{n-1}{\eta}\right)[v]=-\left(\frac{\eta}{n-1}\right)^2v,\ \ \ D\kappa_n\left(\frac{n-1}{\eta}\right)[v]=-v''.
\end{equation}
Substituting this into (\ref{DF}) we find:
\begin{equation}\label{DF0}
\left.D\left(F\left(\kapsurf{u}\right)\right)\right|_{u=\frac{n-1}{\eta}}[v]=-\left(F_n(\eta)v''+\frac{\eta^2F_1(\eta)}{n-1}v\right).
\end{equation}
By (\ref{D1Gbar0}) this gives:
\begin{equation}\label{D1Gbar02}
D_1\bar{G}_{\eta_0}(0,\eta)[\bar{v}]=F_n(\eta)\bar{v}''+\frac{\eta^2F_1(\eta)}{(n-1)}\bar{v}.
\end{equation}
This map is a bijection from $h_{e,0}^{2,\alpha}\left(\Sdp\right)$ to $h_{e,0}^{0,\alpha}\left(\Sdp\right)$ except if $\eta=\frac{m\pi}{d}\sqrt{\frac{(n-1)F_n(\eta)}{F_1(\eta)}}$ for some $m\in\mathbb{N}$ or $F_1(\eta)=F_n(\eta)=0$. Thus bifurcation can only occur at these points, such as when $\eta=\frac{n-1}{R_{crit}}$.

When $\eta=\frac{m\pi}{d}\sqrt{\frac{(n-1)F_n(\eta)}{F_1(\eta)}}$ it is easily seen that
\begin{align*}
N\left(D_1\bar{G}_{\eta_0}(0,\eta)\right)=&\text{span}\left(\cos\left(\frac{m\pi z}{d}\right)\right)\\
R\left(D_1\bar{G}_{\eta_0}(0,\eta)\right)=&h_{e,0}^{0,\alpha}\left(\Sdp\right)/\cos\left(\frac{m\pi z}{d}\right).
\end{align*}
Also, from (\ref{D1Gbar02}),
\begin{align}\label{D12Gbar0}
D_{12}^2\bar{G}_{\eta_0}\left(0,\eta\right)[\bar{v}]=&F_{n}'(\eta)\bar{v}''+\left(\frac{2\eta F_1(\eta)}{n-1}+\frac{\eta^2F_1'(\eta)}{n-1}\right)\bar{v}\nonumber\\
=&\frac{F_{n}'(\eta)}{F_n(\eta)}D_1\bar{G}_{\eta_0}(0,\eta)[\bar{v}]+\frac{\eta}{n-1}\left(2F_1(\eta)+\eta\left(F_1'(\eta)-\frac{F_1(\eta)F_{n}'(\eta)}{F_n(\eta)}\right)\right)\bar{v}.
\end{align}
Therefore when $\eta$ satisfies $\eta=\frac{m\pi}{d}\sqrt{\frac{(n-1)F_n(\eta)}{F_1(\eta)}}$ we have
\begin{equation*}
D_{12}^2\bar{G}_{\eta_0}\left(0,\eta\right)\left[\cos\left(\frac{m\pi z}{d}\right)\right]= \frac{\eta}{n-1}\left(2F_1(\eta)+\eta\left(F_1'(\eta)-\frac{F_1(\eta)F_{n}'(\eta)}{F_n(\eta)}\right)\right)\cos\left(\frac{m\pi z}{d}\right).
\end{equation*}
To apply Theorem I.5.1 in \cite{Kielhofer12} we require that $D_{12}^2\bar{G}_{\eta_0}\left(0,\eta\right)\left[\cos\left(\frac{m\pi z}{d}\right)\right]\notin R\left(D_1\bar{G}_{\eta_0}(0,\eta)\right)$, which is equivalent to
\begin{equation}\label{BifCond}
2F_1(\eta)+\eta\left(F_1'(\eta)-\frac{F_1(\eta)F_{n}'(\eta)}{F_n(\eta)}\right)\neq0.
\end{equation}

To prove this is the case when $\eta=\frac{n-1}{R_{crit}}$, we notice that by assumption \descref{(A5)} the function $f(\eta)=\frac{\eta^2}{n-1}F_1(\eta)-\frac{\pi^2}{d^2}F_n(\eta)$ changes sign at $\eta=\frac{n-1}{R_{crit}}$, and hence $f'\left(\frac{n-1}{R_{crit}}\right)\neq0$. Calculating this derivative gives:
\begin{align*}
f'(\eta)=&\frac{2\eta}{n-1}F_1(\eta)+\frac{\eta^2F_1'(\eta)}{n-1}-\frac{\pi^2}{d^2}F_{n}'(\eta)\\
=&\frac{\eta}{n-1}\left(2F_1(\eta)+\eta\left(F_1'(\eta)-\frac{(n-1)^2\eta^2 F_1\left(\frac{n-1}{R_{crit}}\right)}{R_{crit}^2F_n\left(\frac{n-1}{R_{crit}}\right)}F_{n}'(\eta)\right)\right),
\end{align*}
so that (\ref{BifCond}) is satisfied when $\eta=\frac{n-1}{R_{crit}}$.
\end{proof}

\begin{corollary}\label{StatSol}
There exists a continuously differentiable family of nontrivial, axially symmetric hypersurfaces that are stationary solutions to the flow (\ref{WVPCF}), with assumptions \descref{(A1)}-\descref{(A5)} for $\tilde{R}=R_{crit}$, that includes the cylinder of radius $R_{crit}$, they are given by the profile curves $\tilde{\rho}(s):=\psi_{\eta_0}\left(\tilde{u}(s),\eta(s)\right)|_{[0,d]}$.
\end{corollary}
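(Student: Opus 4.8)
The statement of Corollary \ref{StatSol} is essentially a translation result: it asserts that the abstract bifurcation curve $\left\{\left(\tilde{u}(s),\eta(s)\right):|s|<\delta\right\}$ produced by Theorem \ref{BifPoints} on the reduced space $h_{e,0}^{2,\alpha}\left(\Sdp\right)$ corresponds, via the diffeomorphism $\psi_{\eta_0}$ and the reduction of Corollary \ref{EquivEq}, to a genuine family of stationary hypersurfaces for the original flow (\ref{WVPCF}). So the plan is to push the reduced-equation statement forward through the constructions of Section \ref{SecRedEq}, checking at each step that zeros are preserved and that the geometric objects (graphs over cylinders, orthogonal boundary condition, smoothness of the family) behave as claimed.

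\smallskip

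\noindent\textbf{Step 1: From reduced zeros to circle zeros.} First I would set $\tilde{u}_{\Sdp}(s):=\psi_{\eta_0}\left(\tilde{u}(s),\eta(s)\right)\in V_{\eta_0}\subset h_e^{2,\alpha}\left(\Sdp\right)$. Theorem \ref{BifPoints} gives $\bar{G}_{\eta_0}\left(\tilde{u}(s),\eta(s)\right)=P_0\left[G\left(\psi_{\eta_0}(\tilde{u}(s),\eta(s))\right)\right]=0$, i.e. $G\left(\tilde{u}_{\Sdp}(s)\right)$ is a constant (in $z$) function. But $G(u)=L(u)\bigl(\text{average term}-F(\kapsurf{u})\bigr)$ by definition, and integrating $G(u)/L(u)\cdot\Xi(\kapsurf{u})u^{n-1}L(u)$ against $dz$ against the weight shows the constant must be zero — this is exactly the computation carried out at the end of the proof of Corollary \ref{EquivEq}, where $\frac{\partial u}{\partial t}=D_1\psi_{\eta_0}[\,\partial_t\bar u\,]$ is shown to equal $G(u)$ and the "$P_0$ vs full" discrepancy cancels. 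The upshot is that $G\left(\tilde{u}_{\Sdp}(s)\right)=0$ identically, so $\tilde{u}_{\Sdp}(s)$ is a stationary solution of (\ref{WVPCFCircle}).

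\smallskip

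\noindent\textbf{Step 2: From circle zeros back to graphs over the cylinder.} Since $\tilde{u}_{\Sdp}(s)\in h_e^{2,\alpha}\left(\Sdp\right)$ is even, restrict it to $[0,d]$: set $\tilde{\rho}(s):=\tilde{u}_{\Sdp}(s)|_{[0,d]}=\psi_{\eta_0}\left(\tilde{u}(s),\eta(s)\right)|_{[0,d]}$. Evenness of $\tilde{u}_{\Sdp}(s)$ at the endpoints $z=0$ and $z=d$ (which are the reflection points of the circle of circumference $2d$) forces $\tilde{\rho}'(s)|_{z=0,d}=0$, i.e. the orthogonal boundary condition, and the equivalence between (\ref{WVPCFGraph}) and (\ref{WVPCFCircle}) established in Section \ref{SecRedEq} guarantees $\tilde{\rho}(s)$ is a stationary solution of (\ref{WVPCFGraph}). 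By the construction of the axially symmetric embeddings $\bm{X}_{\rho}$, the hypersurface $\Omega_s$ given by $\bm{X}_{\tilde{\rho}(s)}$ is then a stationary solution of (\ref{WVPCF}) with the orthogonal boundary condition. At $s=0$ we have $\tilde{u}(0)=0$ and $\eta(0)=\eta_0=\frac{n-1}{R_{crit}}$, so by the third bullet point of Lemma \ref{defpsi}, $\psi_{\eta_0}(0,\eta_0)=\frac{n-1}{\eta_0}=R_{crit}$, the constant function; hence $\Omega_0=\CR$ with $R=R_{crit}$, so the cylinder of radius $R_{crit}$ sits inside the family.

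\smallskip

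\noindent\textbf{Step 3: Regularity and nontriviality of the family.} Finally I would record that $s\mapsto\left(\tilde{u}(s),\eta(s)\right)$ is $C^1$ (this is part of the conclusion of Theorem I.5.1 in \cite{Kielhofer12}, invoked in Theorem \ref{BifPoints}), that $\psi_{\eta_0}$ is a smooth diffeomorphism (Lemma \ref{defpsi}), and that restriction to $[0,d]$ is a bounded linear map; composing these gives that $s\mapsto\tilde{\rho}(s)$, hence $s\mapsto\bm{X}_{\tilde{\rho}(s)}$, is continuously differentiable. Nontriviality (the surfaces are genuinely non-cylindrical for $s\neq0$) follows because $\tilde{u}(s)\neq0$ for $s\neq0$ and $\psi_{\eta_0}(\cdot,\cdot)$ is injective with $\psi_{\eta_0}(0,\eta)$ the only cylinders in its range; alternatively, from $\psi_{\eta_0}(\bar u,\eta)=\bar u+\fint_{\Sdp}\psi_{\eta_0}(\bar u,\eta)\,dz$ one sees $\frac{d}{dz}\psi_{\eta_0}(\tilde u(s),\eta(s))=\frac{d}{dz}\tilde u(s)\not\equiv0$ for $s\neq0$ (since the bifurcating direction is $\cos(m\pi z/d)$, which has nonzero derivative). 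I do not anticipate a serious obstacle here: the only point requiring care is Step 1 — verifying that the projected equation $P_0[G(u)]=0$ really does upgrade to the unprojected $G(u)=0$ — but this is precisely the cancellation already exhibited in the proof of Corollary \ref{EquivEq}, so it can be cited rather than redone. The remainder is bookkeeping about even extensions, boundary conditions, and functoriality of the maps involved.
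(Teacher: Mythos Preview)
Your proposal is correct and matches the paper's approach: the paper states Corollary \ref{StatSol} without any proof, treating it as an immediate consequence of Theorem \ref{BifPoints} together with the equivalence of flows established in Corollary \ref{EquivEq}. Your Step 1 cancellation (upgrading $P_0[G(u)]=0$ to $G(u)=0$) is exactly the computation at the end of the proof of Corollary \ref{EquivEq}, and Steps 2--3 are the bookkeeping the paper leaves implicit; there is nothing to correct.
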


We now give a stronger corollary for when $F$ is a homogeneous function. This proves the first part of Corollary \ref{HomogCor}; the second part of which follows straight from Theorem \ref{MainThm}.

\begin{corollary}\label{StatSolHomog}
There exists a continuously differentiable family of nontrivial, axially symmetric hypersurfaces that are stationary solutions to the flow (\ref{WVPCF}), with assumptions \descref{(A1)}-\descref{(A5)*} satisfied at $\tilde{R}=\frac{n-1}{\eta_m}$ with $\eta_m:=\frac{m\pi}{d}\sqrt{\frac{(n-1)F_n}{F_1}}$, that includes the cylinder of radius $\frac{n-1}{\eta_m}$; they can each be represented by a profile curve: $\tilde{\rho}_m(s):=\psi_{\eta_m}\left(\tilde{u}_m(s),\eta_m(s)\right)|_{[0,d]}$.
\end{corollary}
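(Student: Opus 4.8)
The plan is to re-run the argument of Theorem~\ref{BifPoints} at each crossing value $\eta_m$ in turn, replacing the use of assumption \descref{(A5)} at the transversality step by a direct computation that homogeneity makes trivial. Fix $m\in\mathbb{N}$ and set $\eta_0=\eta_m$. Since \descref{(A3)} holds at $\tilde R=\frac{n-1}{\eta_m}$ (so $\Xi(\kapsurf{\frac{n-1}{\eta_m}})>0$), Lemma~\ref{defpsi} furnishes $\psi_{\eta_m}$ and Corollary~\ref{EquivEq} reduces the search for stationary hypersurfaces near the cylinder of radius $\frac{n-1}{\eta_m}$ to the search for solutions of $\bar G_{\eta_m}(\bar u,\eta)=0$ near $(0,\eta_m)$ in $h^{2,\alpha}_{e,0}(\Sdp)\times\Rone^+$. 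By \descref{(A5)*} the coefficients in (\ref{D1Gbar02}) are $F_1(\eta)=\eta^{k-1}F_1$ and $F_n(\eta)=\eta^{k-1}F_n$ with $F_1,F_n>0$, the positivity coming from \descref{(A2)}, which under homogeneity is radius-independent. Hence the ratio $F_n(\eta)/F_1(\eta)$ is the constant $F_n/F_1$, the exceptional case $F_1(\eta)=F_n(\eta)=0$ never occurs, and $D_1\bar G_{\eta_m}(0,\eta)$ is a linear isomorphism of $h^{2,\alpha}_{e,0}(\Sdp)$ onto $h^{0,\alpha}_{e,0}(\Sdp)$ except precisely at $\eta\in\{\eta_\ell:\ell\in\mathbb{N}\}$; these values are distinct ($\eta_\ell=\ell\eta_1$), so $\eta_m$ is isolated among them.

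At $\eta=\eta_m$ everything from the proof of Theorem~\ref{BifPoints} carries over verbatim: $N(D_1\bar G_{\eta_m}(0,\eta_m))$ is the line spanned by $\cos(\frac{m\pi z}{d})$, the range is its $L^2$-orthogonal complement in $h^{0,\alpha}_{e,0}(\Sdp)$, and $D_1\bar G_{\eta_m}(0,\eta_m)$ is Fredholm of index zero. The only computation that is new is the transversality condition (\ref{BifCond}): inserting $F_1(\eta)=\eta^{k-1}F_1$ and $F_n(\eta)=\eta^{k-1}F_n$ gives $F_1'(\eta)-\frac{F_1(\eta)F_n'(\eta)}{F_n(\eta)}=0$, so (\ref{BifCond}) reduces to $2F_1(\eta_m)=2\eta_m^{k-1}F_1\neq0$, which holds since $F_1>0$. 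Thus Theorem~I.5.1 of \cite{Kielhofer12} applies at $(0,\eta_m)$ and produces a continuously differentiable curve $\{(\tilde u_m(s),\eta_m(s)):|s|<\delta\}$ with $(\tilde u_m(0),\eta_m(0))=(0,\eta_m)$, $\tilde u_m(s)\neq0$ for $s\neq0$, and $\bar G_{\eta_m}(\tilde u_m(s),\eta_m(s))=0$; after shrinking $\delta$ this curve lies in $U_{\eta_m}$.

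Finally, transfer back exactly as in Corollary~\ref{StatSol}: the constant path $\bar u(t)\equiv\tilde u_m(s)$ solves (\ref{RedWVPCF}) with parameter $\eta_m(s)$, so by Corollary~\ref{EquivEq} the (time-independent) function $u_m(s):=\psi_{\eta_m}(\tilde u_m(s),\eta_m(s))$ solves (\ref{WVPCFCircle}), whence its restriction $\tilde\rho_m(s):=u_m(s)|_{[0,d]}$ is a positive function on $[0,d]$ with $\tilde\rho_m(s)'(0)=\tilde\rho_m(s)'(d)=0$, and $\bm{X}_{\tilde{\rho}_m(s)}$ is therefore a stationary hypersurface of the flow (\ref{WVPCF}) meeting $\partial W$ orthogonally. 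At $s=0$ it is the cylinder of radius $\frac{n-1}{\eta_m}$, since $\psi_{\eta_m}(0,\eta_m)=\frac{n-1}{\eta_m}$, and for $s\neq0$ it is genuinely non-cylindrical because $P_0[u_m(s)]=\tilde u_m(s)\neq0$. Letting $m$ range over $\mathbb{N}$ yields the asserted sequence of bifurcating families.

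I do not expect a serious obstacle: the corollary is essentially a strengthening of Theorem~\ref{BifPoints} obtained by noting that homogeneity makes the linearised spectrum along the trivial branch completely explicit. The one point that genuinely needs care is that Theorem~\ref{BifPoints} as stated delivers only the bifurcation point at $\frac{n-1}{R_{crit}}=\eta_1$, because its hypothesis \descref{(A5)} refers to a single critical radius; one cannot simply cite it at the other $\eta_m$ but must re-verify the Crandall--Rabinowitz hypotheses there, and the pleasant point is that in the homogeneous case the transversality condition (\ref{BifCond}) is automatic rather than having to be read off from the sign change of $\tfrac{\eta^2}{n-1}F_1(\eta)-\tfrac{\pi^2}{d^2}F_n(\eta)$. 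One should also keep track that \descref{(A2)} and \descref{(A3)} are being invoked at $\tilde R=\frac{n-1}{\eta_m}$ and not merely at $R_{crit}$, but under \descref{(A5)*} positivity of $F_1,F_n$ is radius-free and \descref{(A3)} at that radius is part of the standing hypotheses.
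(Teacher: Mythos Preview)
Your proof is correct and follows essentially the same approach as the paper: both exploit homogeneity to simplify the linearisation (\ref{D1Gbar02}) to $\eta^{k-1}F_n\bigl(\bar v''+\frac{\eta^2F_1}{(n-1)F_n}\bar v\bigr)$, identify the failure of bijectivity precisely at the $\eta_m$, and then verify the transversality condition (\ref{BifCond}) directly by computing that $F_1'(\eta)-\frac{F_1(\eta)F_n'(\eta)}{F_n(\eta)}=0$ so that the condition collapses to $F_1\neq0$. Your write-up is somewhat more explicit than the paper's in spelling out the transfer back to the original flow via Corollary~\ref{EquivEq} and the role of \descref{(A3)} at each $\eta_m$, but the argument is the same.
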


\begin{proof}
Since $F$ is homogeneous, \descref{(A2)} reduces to $F_a\neq0$ for $a=1,n$. Therefore (\ref{D1Gbar02}) becomes
\begin{equation}
D_1\bar{G}_{\eta_0}(0,\eta)[\bar{v}]=\eta^{k-1}F_n\left(\bar{v}''+\frac{\eta^2F_1}{(n-1)F_n}\bar{v}\right),
\end{equation}
which is a bijection if and only if $\eta\neq\eta_m$. Thus bifurcation can only occur at $(0,\eta_m)$. Also, by using the relation $F_a'(\eta)=(k-1)\eta^{k-2}F_a$ for $a=1,n$, condition (\ref{BifCond}) reduces to $F_1\neq0$ and hence each of these points is a bifurcation point with bifurcation curve $(\tilde{u}_m(s),\eta_m(s))$.
\end{proof}

We will now consider the spectrum of $D_1\bar{G}_{\eta_0}(0,\eta)$. It is clear from (\ref{D1Gbar02}) that if $\eta<\eta_0$ ($\eta>\eta_0$, refer \descref{(A5)}) then the spectrum of $D_1\bar{G}_{\eta_0}(0,\eta)$ is contained in the negative real axis, this leads to the stability of the null solution which is special case of Theorem \ref{MainRCyl}. However, when $\eta=\eta_0$ the first eigenvalue becomes zero; we now determine how this eigenvalue behaves as we move from linearising about $\left(0,\eta_0\right)$ to linearising about points on the bifurcation curve.

To analyse the curve we make the following definitions:
\begin{equation*}
\hat{v}:=A\cos\left(\eta_0\sqrt{\frac{F_1(\eta_0)}{(n-1)F_n(\eta_0)}}z\right),\ \ A:=\left\|\cos\left(\eta_0\sqrt{\frac{F_1(\eta_0)}{(n-1)F_n(\eta_0)}}z\right)\right\|^{-1}_{h^{2,\alpha}},
\end{equation*}
\begin{equation*}
\tilde{v}:=B\cos\left(\eta_0\sqrt{\frac{F_1(\eta_0)}{(n-1)F_n(\eta_0)}}z\right),\ \ B:=\left\|\cos\left(\eta_0\sqrt{\frac{F_1(\eta_0)}{(n-1)F_n(\eta_0)}}z\right)\right\|^{-1}_{h^{0,\alpha}},
\end{equation*}
and
\begin{equation}\label{defvtils}
\tilde{v}^*[\bar{v}]:=\frac{2}{B}\fint_{\Sdp}\bar{v}\cos\left(\eta_0\sqrt{\frac{F_1(\eta_0)}{(n-1)F_n(\eta_0)}}z\right)\,dz,
\end{equation}
so that $\tilde{v}^*[\tilde{v}]=1$ and, by the self adjointness of $D_1\bar{G}_{\eta_0}(0,\eta_0)$ with respect to the $L^2$ inner product, $\tilde{v}^*\left[D_1\bar{G}_{\eta_0}\left(0,\eta_0\right)[v]\right]=0$ for all $v\in h_e^{2,\alpha}\left(\Sdp\right)$. For ease of notation we now drop all subscripts referencing $\eta_0$.

\begin{theorem}\label{BifShape}
Let \descref{(A1)}-\descref{(A5)} hold with $\tilde{R}=R_{crit}$. Then the bifurcation curve from Theorem \ref{BifPoints} has the following properties:
\begin{equation}\label{Deta}
\eta'(0)=0
\end{equation}
and
\begin{equation}\label{D2eta}
\eta''(0)=\frac{-\eta_0^3A^2}{12}\left(\frac{\mathscr{F}}{2F_1(\eta_0)+\eta_0F_{1}'(\eta_0)-\frac{\eta_0F_1(\eta_0)F_{n}'(\eta_0)}{F_n(\eta_0)}} -\frac{6\sum_{a=1}^n\frac{c_a\eta_0^{a}}{(n-1)^{a}}\left(\binom{n-2}{a-1}-\frac{F_1(\eta_0)}{F_n(\eta_0)}\binom{n-1}{a-1}\right)}{(n-1)\sum_{a=0}^n\frac{c_a\eta_0^a}{(n-1)^a}\binom{n-1}{a}}\right),
\end{equation}
where
\begin{align}
\mathscr{F}=&\frac{3\eta_0^2F_{1}''(\eta_0)}{(n-1)^2}-\frac{9\eta_0^2F_1(\eta_0)F_{n}''(\eta_0)}{(n-1)^2F_n(\eta_0)}+\frac{9\eta_0^2F_1(\eta_0)^2F_{nn}'(\eta_0)}{(n-1)^2F_n(\eta_0)^2}-\frac{3\eta_0^2F_1(\eta_0)^3F_{nnn}(\eta_0)}{(n-1)^2F_n(\eta_0)^3} \nonumber\\
&+\frac{\eta_0^2F_{1}'(\eta_0)^2}{(n-1)^2F_1(\eta_0)}-\frac{7\eta_0^2F_{1}'(\eta_0)F_{n}'(\eta_0)}{(n-1)^2F_n(\eta_0)}+\frac{5\eta_0^2F_1(\eta_0)F_{1}'(\eta_0)F_{nn}(\eta_0)}{(n-1)^2F_n(\eta_0)^2}\nonumber\\
&+\frac{10\eta_0^2F_1(\eta_0)F_{n}'(\eta_0)^2}{(n-1)^2F_n(\eta_0)^2}-\frac{13\eta_0^2F_1(\eta_0)^2F_{n}'(\eta_0)F_{nn}(\eta_0)}{(n-1)^2F_n(\eta_0)^3}+\frac{4\eta_0^2F_1(\eta_0)^3F_{nn}(\eta_0)^2}{(n-1)^2F_n(\eta_0)^4} \nonumber\\
&+\frac{2(3n+8)\eta_0F_{1}'(\eta_0)}{(n-1)^2}-\frac{4\eta_0F_1(\eta_0)F_{1}'(\eta_0)}{(n-1)F_n(\eta_0)}-\frac{2(3n+13)\eta_0F_1(\eta_0)F_{n}'(\eta_0)}{(n-1)^2F_n(\eta_0)}\nonumber\\
&+\frac{2\eta_0F_1(\eta_0)^2F_{n}'(\eta_0)}{(n-1)F_n(\eta_0)^2}+\frac{10\eta_0F_1(\eta_0)^2F_{nn}(\eta_0)}{(n-1)^2F_n(\eta_0)^2}+\frac{2\eta_0F_1(\eta_0)^3F_{nn}(\eta_0)}{(n-1)F_n(\eta_0)^3} +\frac{2(6n+5)F_1(\eta_0)}{(n-1)^2}\nonumber\\
&+\frac{4F_1(\eta_0)^2}{(n-1)F_n(\eta_0)}-\frac{2F_1(\eta_0)^3}{F_n(\eta_0)^2},\nonumber
\end{align}
and $F_{a}(\eta)=\frac{\partial F}{\partial\kappa_a}\left(\kapsurf{\frac{n-1}{\eta}}\right)$, $F_{nn}(\eta)=\frac{\partial^2 F}{\partial\kappa_n^2}\left(\kapsurf{\frac{n-1}{\eta}}\right)$ and $F_{nnn}(\eta)=\frac{\partial^3 F}{\partial\kappa_n^3}\left(\kapsurf{\frac{n-1}{\eta}}\right)$.
\end{theorem}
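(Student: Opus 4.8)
The plan is to apply the one-dimensional bifurcation expansion (Chapter~I of \cite{Kielhofer12}, already used for Theorem~\ref{BifPoints}) to the smooth map $\bar G_{\eta_0}$ along the bifurcation curve $\tilde u(s)=s\hat v+\psi(s)$ of Theorem~\ref{BifPoints}, where $\psi(s)=o(s)$ lies in a fixed complement of $\mathrm{span}(\hat v)$ in $h^{2,\alpha}_{e,0}\left(\Sdp\right)$; since $\psi_{\eta_0}$ (Lemma~\ref{defpsi}) and $G$ are smooth, so is $\bar G_{\eta_0}$ and hence $s\mapsto(\tilde u(s),\eta(s))$. Differentiating the identity $\bar G_{\eta_0}\left(\tilde u(s),\eta(s)\right)=0$ in $s$ at $s=0$, using $\tilde u'(0)=\hat v$ and $\bar G_{\eta_0}(0,\eta)=P_0\left[G\left(\frac{n-1}{\eta}\right)\right]\equiv0$ (a cylinder has constant $F\left(\kapsurf{}\right)$, so $G$ vanishes on constant profiles; consequently all pure-$\eta$ derivatives of $\bar G_{\eta_0}$ at $(0,\eta_0)$ vanish), and then pairing with $\tilde v^*$, which annihilates $R\left(D_1\bar G_{\eta_0}(0,\eta_0)\right)$, gives
\begin{equation*}
2\,\tilde v^*\left[D_1^2\bar G_{\eta_0}(0,\eta_0)[\hat v,\hat v]\right]=-2\,\tilde v^*\left[D_{12}^2\bar G_{\eta_0}(0,\eta_0)[\hat v]\right]\eta'(0),
\end{equation*}
while a further differentiation, once $\eta'(0)=0$ is known, together with the second-order identity $D_1\bar G_{\eta_0}(0,\eta_0)[w_2]=-D_1^2\bar G_{\eta_0}(0,\eta_0)[\hat v,\hat v]$ for $w_2:=\tilde u''(0)$, gives
\begin{equation*}
3\,\tilde v^*\left[D_{12}^2\bar G_{\eta_0}(0,\eta_0)[\hat v]\right]\eta''(0)=-\tilde v^*\left[D_1^3\bar G_{\eta_0}(0,\eta_0)[\hat v,\hat v,\hat v]\right]-3\,\tilde v^*\left[D_1^2\bar G_{\eta_0}(0,\eta_0)[\hat v,w_2]\right].
\end{equation*}
By (\ref{D12Gbar0}), $\tilde v^*\left[D_{12}^2\bar G_{\eta_0}(0,\eta_0)[\hat v]\right]=\frac{\eta_0}{n-1}\left(2F_1(\eta_0)+\eta_0F_1'(\eta_0)-\frac{\eta_0F_1(\eta_0)F_n'(\eta_0)}{F_n(\eta_0)}\right)\tilde v^*[\hat v]$, which is nonzero since the scalar factor is (\ref{BifCond}) and $\tilde v^*[\hat v]=\frac{A}{B}>0$; this scalar factor is the denominator in (\ref{D2eta}).

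It remains to evaluate the right-hand sides. Write $u_0:=\frac{n-1}{\eta_0}$ and $\omega:=\eta_0\sqrt{F_1(\eta_0)/((n-1)F_n(\eta_0))}$, so that $\hat v=A\cos(\omega z)$, $\hat v''=-\omega^2\hat v$ and $(\hat v')^2=\omega^2(A^2-\hat v^2)$. The key reductions are: (i)~from (\ref{Dpsi}) and the representation $\psi_{\eta_0}(\bar u,\eta)=\bar u+\fint_{\Sdp}\psi_{\eta_0}(\bar u,\eta)\,dz$, one has $D_1\psi_{\eta_0}(0,\eta_0)[\hat v]=\hat v$ and $D_1^{\,j}\psi_{\eta_0}(0,\eta_0)[\hat v,\dots,\hat v]$ is a constant function for $j\ge2$; (ii)~by (\ref{DG0}), $DG(u_0)$ annihilates constant functions; (iii)~$G=L\cdot(\overline F-F)$ with $\overline F:=\int_{\Sdp}F\left(\kapsurf{u}\right)\Xi\left(\kapsurf{u}\right)\,d\musurf{u}\big/\int_{\Sdp}\Xi\left(\kapsurf{u}\right)\,d\musurf{u}$, and since $\overline F-F$ and $DL$ vanish at $u_0$ only few terms of $D^jG(u_0)$ acting on copies of $\hat v$ survive; at order three the lone survivor $3D^2L(u_0)[\hat v,\hat v]D(\overline F-F)(u_0)[\hat v]$ vanishes because $D\overline F(u_0)[\hat v]=0$ (its quotient-rule integrands are first-harmonic, hence mean-zero on $\Sdp$), whence $D(\overline F-F)(u_0)[\hat v]=-\left.D\left(F\left(\kapsurf{u}\right)\right)\right|_{u=u_0}[\hat v]=F_n(\eta_0)\hat v''+\frac{\eta_0^2F_1(\eta_0)}{n-1}\hat v=0$, the kernel equation; (iv)~$\overline F$ is scalar-valued, so every $D^j\overline F(u_0)[\cdot]$ is a constant function, and in addition $D^2\overline F(u_0)[\hat v,\cdot]$ (second slot $\hat v$, $w_2$, or a constant) and $D^3\overline F(u_0)[\hat v,\hat v,\hat v]$ vanish because the relevant integrands are trigonometric polynomials on $\Sdp$ with no zeroth harmonic.

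With these, $D_1^2\bar G_{\eta_0}(0,\eta_0)[\hat v,\hat v]=-P_0\left[\left.D^2\left(F\left(\kapsurf{u}\right)\right)\right|_{u=u_0}[\hat v,\hat v]\right]$; expanding this second variation by the chain rule through $\kappa_1(u)=1/(uL)$ and $\kappa_n(u)=-u''/L^3$ shows every term is a constant plus a multiple of $\cos(2\omega z)$, so it equals $c_2\cos(2\omega z)$ for an explicit constant $c_2$, and hence $\tilde v^*\left[D_1^2\bar G_{\eta_0}(0,\eta_0)[\hat v,\hat v]\right]=0$ by orthogonality of distinct harmonics on $\Sdp$; this is (\ref{Deta}). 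Since by (\ref{D1Gbar02}) the operator $D_1\bar G_{\eta_0}(0,\eta_0)=F_n(\eta_0)\partial_z^2+\frac{\eta_0^2F_1(\eta_0)}{n-1}$ sends $\cos(2\omega z)$ to $-3\omega^2F_n(\eta_0)\cos(2\omega z)$, we get $w_2=\frac{c_2}{3\omega^2F_n(\eta_0)}\cos(2\omega z)$. For $\eta''(0)$ there are then three contributions to assemble: (a)~$\tilde v^*\left[D_1^2\bar G_{\eta_0}(0,\eta_0)[\hat v,w_2]\right]=-\tilde v^*\left[\left.D^2\left(F\left(\kapsurf{u}\right)\right)\right|_{u=u_0}[\hat v,w_2]\right]$, which, $w_2$ being a multiple of $c_2$, produces the monomials of $\mathscr F$ built from products of second curvature derivatives; (b)~the bulk piece $-\tilde v^*\left[\left.D^3\left(F\left(\kapsurf{u}\right)\right)\right|_{u=u_0}[\hat v,\hat v,\hat v]\right]$ of $\tilde v^*\left[D_1^3\bar G_{\eta_0}(0,\eta_0)[\hat v,\hat v,\hat v]\right]$, yielding, after extracting the $\cos(\omega z)$-harmonic and rewriting all curvature partials of order $\le3$ at the cylinder in terms of $F_a(\eta_0)$, $F_{nn}(\eta_0)$, $F_{nnn}(\eta_0)$ and their $\eta$-derivatives (obtained by differentiating $F_a(\eta)=\frac{\partial F}{\partial\kappa_a}\left(\kapsurf{\frac{n-1}{\eta}}\right)$, $F_{nn}(\eta)$, $F_{nnn}(\eta)$ in $\eta$), the remaining part of $\mathscr F$; and (c)~the $\psi$-correction $3D^2G(u_0)[\hat v,c_\psi]=-3\left.D^2\left(F\left(\kapsurf{u}\right)\right)\right|_{u=u_0}[\hat v,c_\psi]$, a multiple of $\hat v$, with $c_\psi:=D_1^2\psi_{\eta_0}(0,\eta_0)[\hat v,\hat v]$ determined by differentiating $\fint_{\Sdp}Q\left(\psi_{\eta_0}(\bar u,\eta)\right)\,dz=\tilde Q(\eta)$ twice in $\bar u$ and using $DQ(u_0)[v]=n\,u_0^{\,n-1}\Xi\left(\kapsurf{u_0}\right)v$:
\begin{equation*}
c_\psi=-\frac{\fint_{\Sdp}D^2Q(u_0)[\hat v,\hat v]\,dz}{n\,u_0^{\,n-1}\Xi\left(\kapsurf{u_0}\right)} .
\end{equation*}
This is the sole entry point of the weight coefficients $c_a$ into $\eta''(0)$; using $E_a\left(\kapsurf{u_0}\right)=\binom{n-1}{a}u_0^{-a}$, $\fint\hat v^2=\frac{A^2}{2}$ and $\fint(\hat v')^2=\frac{A^2\omega^2}{2}$, it evaluates to the weight-dependent fraction in (\ref{D2eta}). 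Substituting (a), (b), (c) into the $\eta''(0)$ identity, dividing by $3\,\tilde v^*\left[D_{12}^2\bar G_{\eta_0}(0,\eta_0)[\hat v]\right]$, and factoring out the common $\frac{-\eta_0^3A^2}{12}$ yields (\ref{D2eta}).

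The main obstacle is purely computational: the third variation $\left.D^3\left(F\left(\kapsurf{u}\right)\right)\right|_{u=u_0}$ unfolds into a large number of terms once the chain rule through $\kappa_1$ and $\kappa_n$, and their first, second and third variations at a constant profile, is written out, and one must both extract the $\cos(\omega z)$-harmonic cleanly and reorganise every curvature partial at the cylinder into the compact $F_a(\eta)$, $F_{nn}(\eta)$, $F_{nnn}(\eta)$ notation. It is worth recording beforehand a general chain-rule expression for $D^jF\left(\kapsurf{u}\right)$ together with the $\eta$-differentiation identities for these functions; everything else — the vanishing of the nonlocal ($\overline F$) terms under $\tilde v^*$, the identity $\eta'(0)=0$, and the resolution of $w_2$ and $c_\psi$ — follows directly from reductions (i)--(iv).
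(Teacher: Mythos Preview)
Your approach is essentially the same as the paper's: both obtain the Kielh\"ofer-type formulas for $\eta'(0)$ and $\eta''(0)$ (the paper cites (I.6.3), (I.6.8), (I.6.11) of \cite{Kielhofer12}, you derive them by differentiating $\bar G_{\eta_0}(\tilde u(s),\eta(s))=0$), then compute $D_{11}^2\bar G_{\eta_0}(0,\eta_0)[\hat v,\hat v]$, solve for the $\cos(2\omega z)$ corrector, and assemble the third-order terms. Your reductions (i)--(iv) match the paper's simplifications (constancy of higher $D_1^j\psi_{\eta_0}$, $DG(u_0)[1]=0$, the kernel identity killing the $D^2L$ cross term), and your derivation of $c_\psi=D_{11}^2\psi_{\eta_0}(0,\eta_0)[\hat v,\hat v]$ by differentiating the $Q$-invariant is a clean alternative to the paper's direct differentiation of (\ref{Dpsi}). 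Two small caveats: the displayed identity for $\eta'(0)$ carries a spurious factor of $2$ on the left (harmless, since that side is shown to vanish), and the claim in (iv) that $D^2\overline F(u_0)[\hat v,\cdot]$ vanishes is stronger than you need---it suffices that these are constants, hence killed by $P_0$; also note $c_\psi$ has a weight-independent piece (the $-\eta_0A^2/2$ term in the paper) which feeds into $\mathscr F$, not only into the weight fraction.
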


\begin{remark}\label{FderivRem}
Note that the derivatives that appear in equation (\ref{D2eta}) can be expanded in terms of the speed function as follows:
\begin{equation*}
F_1'(\eta)=\frac{1}{n-1}\left(\frac{\partial^2F}{\partial\kappa_1^2}\left(\kapsurf{\frac{n-1}{\eta}}\right)+(n-2)\frac{\partial^2F}{\partial\kappa_1\partial\kappa_2}\left(\kapsurf{\frac{n-1}{\eta}}\right)\right),
\end{equation*}
\begin{equation*}
F_1''(\eta)=\frac{1}{(n-1)^2}\left(\frac{\partial^3F}{\partial\kappa_1^3}\left(\kapsurf{\frac{n-1}{\eta}}\right) +2(n-2)\frac{\partial^3F}{\partial\kappa_1^2\partial\kappa_2}\left(\kapsurf{\frac{n-1}{\eta}}\right) +(n-2)(n-3)\frac{\partial^3F}{\partial\kappa_1\partial\kappa_2\partial\kappa_3}\left(\kapsurf{\frac{n-1}{\eta}}\right)\right),
\end{equation*}
\begin{equation*}
F_n'(\eta)=\frac{\partial^2F}{\partial\kappa_1\partial\kappa_n}\left(\kapsurf{\frac{n-1}{\eta}}\right),\ \ F_n''(\eta)=\frac{1}{n-1}\left(\frac{\partial^3F}{\partial\kappa_1^2\partial\kappa_n}\left(\kapsurf{\frac{n-1}{\eta}}\right) +(n-2)\frac{\partial^3F}{\partial\kappa_1\partial\kappa_2\partial\kappa_n}\left(\kapsurf{\frac{n-1}{\eta}}\right)\right),
\end{equation*}
\begin{equation*}
 F_{nn}'(\eta)=\frac{\partial^3F}{\partial\kappa_n^2\partial\kappa_1}\left(\kapsurf{\frac{n-1}{\eta}}\right).
\end{equation*}
\end{remark}

\begin{proof}
These formulas come from standard calculations using equations (I.6.3), (I.6.8) and (I.6.11) from \cite{Kielhofer12}:
\begin{equation}\label{Deta1}
\eta'(0)=\frac{-\tilde{v}^*\left[D_{11}^2\bar{G}(0,\eta_0)\left[\hat{v},\hat{v}\right]\right]}{2\tilde{v}^*\left[D_{12}^2\bar{G}(0,\eta_0)\left[\hat{v}\right]\right]},
\end{equation}
\begin{equation}\label{D2eta2}
\eta''(0)=\frac{-\tilde{v}^*\left[D_{111}^3\bar{G}(0,\eta_0)\left[\hat{v},\hat{v},\hat{v}\right]+3D_{11}^2\bar{G}(0,\eta_0)\left[\hat{v},\tilde{w}\right]\right]}{3\tilde{v}^*\left[D_{12}^2\bar{G}(0,\eta_0)\left[\hat{v}\right]\right]},
\end{equation}
where $\tilde{w}$ is the solution to
\begin{equation}\label{defw}
D_1\bar{G}(0,\eta_0)[\bar{w}]=-D_{11}^2\bar{G}(0,\eta_0)\left[\hat{v},\hat{v}\right],
\end{equation}
such that $\tilde{v}^*[\tilde{w}]=0$. Note that (\ref{D2eta2}) is only the correct formula when $\eta'(0)=0$.

By linearising (\ref{D1Gbar}) we have
\begin{align}\label{D11Gbar}
D_{11}^2\bar{G}(\bar{u},\eta)[\bar{v},\bar{w}]=&P_0\left[D^2G\left(\psi(\bar{u},\eta)\right)\left[D_1\psi(\bar{u},\eta)[\bar{v}],D_1\psi(\bar{u},\eta)[\bar{w}]\right]\right]\\
&+P_0\left[DG\left(\psi(\bar{u},\eta)\right)\left[D_{11}^2\psi(\bar{u},\eta)[\bar{v},\bar{w}]\right]\right],\nonumber
\end{align}
and from (\ref{Dpsi}) we find that
\begin{align}\label{DDpsi0}
D_{11}^2\psi(0,\eta_0)[\bar{v},\bar{w}]=&\Xi\left(\kapsurf{\frac{n-1}{\eta_0}}\right)^{-1}\fint_{\Sdp}\bar{v}\left(\frac{\eta_0^2}{n-1}\frac{\partial\Xi}{\partial\kappa_1}\left(\kapsurf{\frac{n-1}{\eta_0}}\right)\bar{w} +\frac{\partial\Xi}{\partial\kappa_n}\left(\kapsurf{\frac{n-1}{\eta_0}}\right)\bar{w}''\right)\,dz\\
& -\eta_0\fint_{\Sdp}\bar{v}\bar{w}\,dz.\nonumber
\end{align}
However at this stage it is only important that this is a constant function, so we associate it to its corresponding real number; in fact it is clear from (\ref{Dpsi}) that $D_{11}^2\psi(\bar{u},\eta)[\bar{v},\bar{w}]$ will be a constant function for any $(\bar{u},\eta)$. Using this, (\ref{D11Gbar}) simplifies to
\begin{equation*}
D_{11}^2\bar{G}(0,\eta_0)=P_0\left[D^2G\left(\frac{n-1}{\eta_0}\right)[\bar{v},\bar{w}]+D_{11}^2\psi(0,\eta_0)[\bar{v},\bar{w}]DG\left(\frac{n-1}{\eta_0}\right)[1]\right].
\end{equation*}
From (\ref{DF0}) we find $\left.D\left(F\left(\kapsurf{u}\right)\right)\right|_{u=\frac{n-1}{\eta_0}}[1]=-\frac{\eta_0^{2}}{n-1}F_1(\eta)$ and hence, by (\ref{DG0}), $DG\left(\frac{n-1}{\eta_0}\right)[1]=0$. Therefore
\begin{equation*}
D_{11}^2\bar{G}(0,\eta_0)=P_0\left[D^2G\left(\frac{n-1}{\eta_0}\right)[\bar{v},\bar{w}]\right].
\end{equation*}

Using (\ref{DG}) we have
\begin{align}\label{DDG}
D^2G(u)[v,w]=&\frac{w'v'G(u)+u'\left(v'DG(u)[w]+w'DG(u)[v]\right)}{L(u)^2} -\frac{3u'^2v'w'G(u)}{L(u)^4}\\
& +L(u)\left(\int_{\Sdp}D^2W(u)[v,w]\,dz-D^2\left(F\left(\kapsurf{u}\right)\right)[v,w]\right),\nonumber
\end{align}

which reduces to
\begin{equation*}
D^2G\left(\frac{n-1}{\eta_0}\right)[v,w]=\int_{\Sdp}D^2W\left(\frac{n-1}{\eta_0}\right)[v,w]\,dz-\left.D^2\left(F\left(\kapsurf{u}\right)\right)\right|_{u=\frac{n-1}{\eta_0}}[v,w].
\end{equation*}
So taking the projection gives:
\begin{equation*}
D_{11}^2\bar{G}(0,\eta_0)[\bar{v},\bar{w}]=\fint_{\Sdp}\left.D^2\left(F\left(\kapsurf{u}\right)\right)\right|_{u=\frac{n-1}{\eta_0}}[\bar{v},\bar{w}]\,dz -\left.D^2\left(F\left(\kapsurf{u}\right)\right)\right|_{u=\frac{n-1}{\eta_0}}[\bar{v},\bar{w}].
\end{equation*}

The second linearisation of $F\left(\kapsurf{u}\right)$ is given by:
\begin{align}\label{DDF}
D^2\left(F\left(\kapsurf{u}\right)\right)[v,w]=&(n-1)\left(\frac{\partial^2F}{\partial\kappa_1^2}\left(\kapsurf{u}\right) +(n-2)\frac{\partial^2F}{\partial\kappa_1\partial\kappa_2}\left(\kapsurf{u}\right)\right)D\kappa_1(u)[v]D\kappa_1(u)[w]\\
&+(n-1)\frac{\partial^2F}{\partial\kappa_1\partial\kappa_n}\left(\kapsurf{u}\right)\left(D\kappa_1(u)[v]D\kappa_n(u)[w]+D\kappa_n(u)[v]D\kappa_1(u)[w]\right)\nonumber\\
& +\frac{\partial^2F}{\partial\kappa_n^2}\left(\kapsurf{u}\right)D\kappa_n(u)[v]D\kappa_n(u)[w] +(n-1)\frac{\partial F}{\partial\kappa_1}\left(\kapsurf{u}\right)D^2\kappa_1(u)[v,w]\nonumber\\
&   +\frac{\partial F}{\partial\kappa_n}\left(\kapsurf{u}\right)D^2\kappa_n(u)[v,w].\nonumber
\end{align}
From (\ref{Dkap}) we obtain
\begin{equation}\label{DDkap}
\begin{array}{c}\normalsize{D^2\kappa_1(u)[v,w]=\frac{2vw}{u^3L(u)}+\frac{u'(vw'+v'w)}{u^2L(u)^3}-\frac{v'w'}{uL(u)^3}+\frac{3u'^2v'w'}{uL(u)^5}},\\ D^2\kappa_n(u)[v,w]=\frac{3(u'v''w'+u'v'w''+u''v'w')}{L(u)^5}-\frac{15u''u'^2v'w'}{L(u)^7},\end{array}
\end{equation}
therefore
\begin{equation}\label{DDkap0}
D^2\kappa_1\left(\frac{n-1}{\eta_0}\right)[v,w]=\frac{2\eta_0^3vw}{(n-1)^3} -\frac{\eta_0v'w'}{n-1},\ \ D^2\kappa_n\left(\frac{n-1}{\eta_0}\right)[v,w]=0,
\end{equation}
and combining this with (\ref{Dkap0}), (\ref{DDF}) and Remark \ref{FderivRem} we have
\begin{align}\label{DDF0}
\left.D^2\left(F\left(\kapsurf{u}\right)\right)\right|_{u=\frac{n-1}{\eta_0}}[v,w]=&\frac{\eta_0^{3}\left(\eta_0F_{1}'(\eta_0)+2F_1(\eta_0)\right)}{(n-1)^{2}}vw +\frac{\eta_0^{2}F_{n}'(\eta_0)}{n-1}\left(vw''+v''w\right)\\
& +F_{nn}(\eta_0)v''w'' -\eta_0F_1(\eta_0)v'w'.\nonumber
\end{align}
So by using the formula for $\hat{v}$ we obtain
\begin{align}\label{DDF0vv}
\left.D^2\left(F\left(\kapsurf{u}\right)\right)\right|_{u=\frac{n-1}{\eta_0}}[\hat{v},\hat{v}]=&\frac{\eta_0^{3}A^2}{n-1}\left(\mathscr{F}_1\cos^2\left(\eta_0\sqrt{\frac{F_1(\eta_0)}{(n-1)F_n(\eta_0)}}z\right)\right.\nonumber\\
&\hspace{1.0cm}\left. -\frac{F_1(\eta_0)^2}{F_n(\eta_0)}\sin^2\left(\eta_0\sqrt{\frac{F_1(\eta_0)}{(n-1)F_n(\eta_0)}}z\right)\right)\nonumber\\
=&\frac{\eta_0^{3}A^2}{2(n-1)}\left(\mathscr{F}_1-\frac{F_1(\eta_0)^2}{F_n(\eta_0)}\right.\nonumber\\
&\hspace{1.5cm}\left.+\left(\mathscr{F}_1+\frac{F_1(\eta_0)^2}{F_n(\eta_0)}\right)\cos\left(2\eta_0\sqrt{\frac{F_1(\eta_0)}{(n-1)F_n(\eta_0)}}z\right)\right),\nonumber
\end{align}
where
\begin{equation*}
\mathscr{F}_1:=\frac{\eta_0 F_{1}'(\eta_0)}{n-1} -\frac{2\eta_0F_1(\eta_0)F_{n}'(\eta_0)}{(n-1)F_n(\eta_0)} +\frac{\eta_0F_1(\eta_0)^2F_{nn}(\eta_0)}{(n-1)F_n(\eta_0)^2}+\frac{2F_1(\eta_0)}{n-1}.
\end{equation*}
Hence
\begin{equation}\label{D11Gbar0vv}
D_{11}^2\bar{G}(0,\eta_0)[\hat{v},\hat{v}]=-\frac{\eta_0^{3}A^2}{2(n-1)}\left(\mathscr{F}_1+\frac{F_1(\eta_0)^2}{F_n(\eta_0)}\right)\cos\left(2\eta_0\sqrt{\frac{F_1(\eta_0)}{(n-1)F_n(\eta_0)}}z\right).
\end{equation}
From the formula for $\tilde{v}^*$ in (\ref{defvtils}) we easily see that $\tilde{v}^*\left[D_{11}^2\bar{G}(0,\eta_0)[\hat{v},\hat{v}]\right]=0$ and hence from (\ref{Deta1}) we get the first result.

We now turn our attention to the second derivative of $\eta(s)$. We see from (\ref{D1Gbar02}) and (\ref{D11Gbar0vv}) that the solution to (\ref{defw}) is of the form $\tilde{\omega}=C\cos\left(2\eta_0\sqrt{\frac{F_1(\eta_0)}{(n-1)F_n(\eta_0)}}z\right)$ and in fact
\begin{equation*}
C=-\frac{\eta_0A^2}{6F_1(\eta_0)}\left(\mathscr{F}_1+\frac{F_1(\eta_0)^2}{F_n(\eta_0)}\right).
\end{equation*}
Now if we define:
\begin{equation*}
\mathscr{F}_2=\frac{\eta_0F_{1}'(\eta_0)}{n-1} -\frac{5\eta_0F_1(\eta_0)F_{n}'(\eta_0)}{(n-1)F_n(\eta_0)} +\frac{4\eta_0F_1(\eta_0)^2F_{nn}(\eta_0)}{(n-1)F_n(\eta_0)^2}+\frac{2F_1(\eta_0)}{n-1},
\end{equation*}
we can use (\ref{DDF0}) to obtain
\begin{align}
\left.D^2\left(F\left(\kapsurf{u}\right)\right)\right|_{u=\frac{n-1}{\eta_0}}[\hat{v},\tilde{w}]=& \frac{\eta_0^{3}AC}{n-1}\left(\mathscr{F}_2\cos\left(\eta_0\sqrt{\frac{F_1(\eta_0)}{(n-1)F_n(\eta_0)}}z\right)\cos\left(2\eta_0\sqrt{\frac{F_1(\eta_0)}{(n-1)F_n(\eta_0)}}z\right)\right.\nonumber\\
&\hspace{1.0cm}\left. -\frac{2F_1(\eta_0)^2}{F_n(\eta_0)}\sin\left(\eta_0\sqrt{\frac{F_1(\eta_0)}{(n-1)F_n(\eta_0)}}z\right)\sin\left(2\eta_0\sqrt{\frac{F_1(\eta_0)}{(n-1)F_n(\eta_0)}}z\right)\right)\nonumber\\
=&\frac{\eta_0^{3}AC}{2(n-1)}\left(\left(\mathscr{F}_2-\frac{2F_1(\eta_0)^2}{F_n(\eta_0)}\right)\cos\left(\eta_0\sqrt{\frac{F_1(\eta_0)}{(n-1)F_n(\eta_0)}}z\right)\right.\nonumber\\
&\hspace{1.3cm}\left. +\left(\mathscr{F}_2+\frac{2F_1(\eta_0)^2}{F_n(\eta_0)}\right)\cos\left(3\eta_0\sqrt{\frac{F_1(\eta_0)}{(n-1)F_n(\eta_0)}}z\right)\right).\nonumber
\end{align}
Thus $\fint_{\Sdp}\left.D^2\left(F\left(\kapsurf{u}\right)\right)\right|_{u=\frac{n-1}{\eta_0}}[\hat{v},\tilde{w}]\,dz=0$, $D_{11}^2\bar{G}(0,\eta_0)[\hat{v},\tilde{w}]=-\left.D^2\left(F\left(\kapsurf{u}\right)\right)\right|_{u=\frac{n-1}{\eta_0}}[\hat{v},\tilde{w}]$ and
\begin{equation}\label{vD11Gbar0vw}
\tilde{v}^*\left[D_{11}^2\bar{G}(0,\eta_0)[\hat{v},\tilde{w}]\right]=\frac{\eta_0^{4}A^3}{12(n-1)F_1(\eta_0)B}\left(\mathscr{F}_1+\frac{F_1(\eta_0)^2}{F_n(\eta_0)}\right)\left(\mathscr{F}_2-\frac{2F_1(\eta_0)^2}{F_n(\eta_0)}\right).
\end{equation}

From (\ref{D11Gbar}) it is easily seen that
\begin{align*}
D_{111}^3\bar{G}(0,\eta_0)[\hat{v},\hat{v},\hat{v}]=&P_0\left[D^3G\left(\frac{n-1}{\eta_0}\right)[\hat{v},\hat{v},\hat{v}]+3D^2G\left(\frac{n-1}{\eta_0}\right)\left[\hat{v},D_{11}^2\psi(0,\eta_0)[\hat{v},\hat{v}]\right]\right.\\
&\hspace{0.4cm}\left.+DG\left(\frac{n-1}{\eta_0}\right)\left[D_{111}^3\psi(0,\eta_0)[\hat{v},\hat{v},\hat{v}]\right]\right]\\
=&P_0\left[D^3G\left(\frac{n-1}{\eta_0}\right)[\hat{v},\hat{v},\hat{v}]+3D_{11}^2\psi(0,\eta_0)[\hat{v},\hat{v}]D^2G\left(\frac{n-1}{\eta_0}\right)\left[\hat{v},1\right]\right],
\end{align*}
where we have used that both $D_{11}^2\psi(0,\eta_0)[\hat{v},\hat{v}]$ and $D_{111}^3\psi(0,\eta_0)[\hat{v},\hat{v},\hat{v}]$ are constant functions, see the comment after (\ref{DDpsi0}), and also that $DG\left(\frac{n-1}{\eta_0}\right)\left[1\right]=0$. From (\ref{DDF0}) we see
\begin{align}
\left.D^2\left(F\left(\kapsurf{u}\right)\right)\right|_{u=\frac{n-1}{\eta_0}}[\hat{v},1]=&\frac{\eta_0^{3}A\left(\eta_0F_{1}'(\eta_0)-\frac{\eta_0F_1(\eta_0)F_{n}'(\eta_0)}{F_n(\eta_0)}+2F_1(\eta_0)\right)}{(n-1)^2}\cos\left(\eta_0\sqrt{\frac{F_1(\eta_0)}{(n-1)F_n(\eta_0)}}z\right),\nonumber
\end{align}
hence
\begin{align}
P_0\left[D^2G\left(\frac{n-1}{\eta_0}\right)[\hat{v},1]\right]=&\fint_{\Sdp}\left.D^2\left(F\left(\kapsurf{u}\right)\right)\right|_{u=\frac{n-1}{\eta_0}}[\hat{v},1]\,dz -\left.D^2\left(F\left(\kapsurf{u}\right)\right)\right|_{u=\frac{n-1}{\eta_0}}[\hat{v},1]\nonumber\\
=&-\frac{\eta_0^{3}A\left(\eta_0F_{1}'(\eta_0)-\frac{\eta_0F_1(\eta_0)F_{n}'(\eta_0)}{F_n(\eta_0)}+2F_1(\eta_0)\right)}{(n-1)^2}\cos\left(\eta_0\sqrt{\frac{F_1(\eta_0)}{(n-1)F_n(\eta_0)}}z\right).\nonumber
\end{align}
From (\ref{DDpsi0}) we also see that
\begin{align}
D_{11}^2\psi(0,\eta_0)[\hat{v},\hat{v}]=&\frac{\eta_0^2A^2}{(n-1)\Xi\left(\kapsurf{\frac{n-1}{\eta_0}}\right)}\fint_{\Sdp}\left(\frac{\partial\Xi}{\partial\kappa_1}\left(\kapsurf{\frac{n-1}{\eta_0}}\right)-\frac{F_1(\eta_0)}{F_n(\eta_0)}\frac{\partial\Xi}{\partial\kappa_n}\left(\kapsurf{\frac{n-1}{\eta_0}}\right)\right)\cos^2\left(\eta_0\sqrt{\frac{F_1(\eta_0)}{(n-1)F_n(\eta_0)}}z\right)\,dz\nonumber\\
&-\eta_0A^2\fint_{\Sdp}\cos^2\left(\eta_0\sqrt{\frac{F_1(\eta_0)}{(n-1)F_n(\eta_0)}}z\right)\,dz\nonumber\\
=&\frac{\eta_0^2A^2}{2(n-1)\Xi\left(\kapsurf{\frac{n-1}{\eta_0}}\right)}\left(\frac{\partial\Xi}{\partial\kappa_1}\left(\kapsurf{\frac{n-1}{\eta_0}}\right)-\frac{F_1(\eta_0)}{F_n(\eta_0)}\frac{\partial\Xi}{\partial\kappa_n}\left(\kapsurf{\frac{n-1}{\eta_0}}\right)\right) -\frac{\eta_0A^2}{2}.\nonumber
\end{align}

Lastly, the third linearisation of $G$ can be calculated from (\ref{DDG}):
\begin{align}\label{DDDG0vvv}
D^3G\left(\frac{n-1}{\eta_0}\right)[\hat{v},\hat{v},\hat{v}]=&\int_{\Sdp}D^3W\left(\frac{n-1}{\eta_0}\right)[\hat{v},\hat{v},\hat{v}]\,dz - \left.D^3\left(F\left(\kapsurf{u}\right)\right)\right|_{u=\frac{n-1}{\eta_0}}[\hat{v},\hat{v},\hat{v}],
\end{align}
where we have used (\ref{D1Gbar0}) and the definition of $\hat{v}$ as being in the null space of $D_1\bar{G}(0,\eta_0)$.

We calculate $\left.D^3\left(F\left(\kapsurf{u}\right)\right)\right|_{u=\frac{n-1}{\eta_0}}[\hat{v},\hat{v},\hat{v}]$ using Remark \ref{FderivRem} along with (\ref{Dkap0}), (\ref{DDkap0}) and
\begin{equation*}
D^3\kappa_1\left(\frac{n-1}{\eta_0}\right)[\hat{v},\hat{v},\hat{v}]=\frac{-6\eta_0^4}{(n-1)^4}\hat{v}^3+\frac{3\eta_0^2}{(n-1)^2}\hat{v}\hat{v}'^2,\ \ D^3\kappa_n\left(\frac{n-1}{\eta_0}\right)[\hat{v},\hat{v},\hat{v}]=9\hat{v}''\hat{v}'^2,
\end{equation*}
to obtain
\begin{align}
\left.D^3\left(F\left(\kapsurf{u}\right)\right)\right|_{u=\frac{n-1}{\eta_0}}[\hat{v},\hat{v},\hat{v}] =&-\frac{\eta_0^{6}F_{1}''(\eta_0)}{(n-1)^{3}}\hat{v}^3 -\frac{3\eta_0^{4}F_{n}''(\eta_0)}{(n-1)^{2}}\hat{v}^2\hat{v}'' -\frac{3\eta_0^{2}F_{nn}'(\eta_0)}{n-1}\hat{v}\hat{v}''^2 -F_{nnn}(\eta_0)\hat{v}''^3\nonumber\\
& -\frac{3\eta_0^{3}F_{1}'(\eta_0)}{n-1}\left(\frac{2\eta_0^2}{(n-1)^2}\hat{v}^2-\hat{v}'^2\right)\hat{v} -3\eta_0F_{n}'(\eta_0)\left(\frac{2\eta_0^2}{(n-1)^2}\hat{v}^2-\hat{v}'^2\right)\hat{v}''\nonumber\\
& +\frac{3\eta_0^{2}F_1(\eta_0)}{n-1}\left(\hat{v}\hat{v}'^2-\frac{2\eta_0^2}{(n-1)^2}\hat{v}^3\right) +9F_n(\eta_0)\hat{v}''\hat{v}'^2\nonumber\\
=&-\frac{\eta_0^{4}A^3}{(n-1)^2}\left(\mathscr{F}_3\cos^3\left(\eta_0\sqrt{\frac{F_1(\eta_0)}{(n-1)F_n(\eta_0)}}z\right)\right.\nonumber\\
&\hspace{1.7cm} \left.+3\mathscr{F}_4\sin^2\left(\eta_0\sqrt{\frac{F_1(\eta_0)}{(n-1)F_n(\eta_0)}}z\right)\cos\left(\eta_0\sqrt{\frac{F_1(\eta_0)}{(n-1)F_n(\eta_0)}}z\right)\right)\nonumber\\
=&-\frac{\eta_0^{4}A^3}{4(n-1)^{2}}\left(3\left(\mathscr{F}_3+\mathscr{F}_4\right)\cos\left(\eta_0\sqrt{\frac{F_1(\eta_0)}{(n-1)F_n(\eta_0)}}z\right)\right.\nonumber\\
&\hspace{1.9cm} \left.+\left(\mathscr{F}_3-3\mathscr{F}_4\right)\cos\left(3\eta_0\sqrt{\frac{F_1(\eta_0)}{(n-1)F_n(\eta_0)}}z\right)\right),\nonumber
\end{align}
where
\begin{align*}
\mathscr{F}_3:=&\frac{\eta_0^2F_{1}''(\eta_0)}{n-1} -\frac{3\eta_0^2F_1(\eta_0)F_{n}''(\eta_0)}{(n-1)F_n(\eta_0)} +\frac{3\eta_0^2F_1(\eta_0)^2F_{nn}'(\eta_0)}{(n-1)F_n(\eta_0)^2} -\frac{\eta_0^2F_1(\eta_0)^3F_{nnn}(\eta_0)}{(n-1)F_n(\eta_0)^3}\\
& +\frac{6\eta_0F_{1}'(\eta_0)}{n-1} -\frac{6\eta_0F_1(\eta_0)F_{n}'(\eta_0)}{(n-1)F_n(\eta_0)} +\frac{6F_1(\eta_0)}{n-1}
\end{align*}
and
\begin{equation*}
\mathscr{F}_4:=\frac{-\eta_0F_1(\eta_0)F_{1}'(\eta_0)}{F_n(\eta_0)} +\frac{\eta_0F_1(\eta_0)^2F_{n}'(\eta_0)}{F_n(\eta_0)^2} +\frac{2F_1(\eta_0)^2}{F_n(\eta_0)}.
\end{equation*}

Combining this with (\ref{DDDG0vvv}) we conclude
\begin{align}
P_0\left[D^3G\left(\frac{n-1}{\eta_0}\right)[\hat{v},\hat{v},\hat{v}]\right]=&\fint_{\Sdp}\left.D^3\left(F\left(\kapsurf{u}\right)\right)\right|_{u=\frac{n-1}{\eta_0}}[\hat{v},\hat{v},\hat{v}] \,dz -\left.D^3\left(F\left(\kapsurf{u}\right)\right)\right|_{u=\frac{n-1}{\eta_0}}[\hat{v},\hat{v},\hat{v}] \nonumber\\
=&\frac{\eta_0^{4}A^3}{4(n-1)^{2}}\left(3\left(\mathscr{F}_3+\mathscr{F}_4\right)\cos\left(\eta_0\sqrt{\frac{F_1(\eta_0)}{(n-1)F_n(\eta_0)}}z\right)\right.\nonumber\\
&\hspace{1.7cm} \left.+\left(\mathscr{F}_3-3\mathscr{F}_4\right)\cos\left(3\eta_0\sqrt{\frac{F_1(\eta_0)}{(n-1)F_n(\eta_0)}}z\right)\right),\nonumber
\end{align}
and
\begin{align}
\tilde{v}^*\left[D_{111}^3\bar{G}(0,\eta_0)[\hat{v},\hat{v},\hat{v}]\right]=&\frac{3\eta_0^4A^3}{4(n-1)^2B}\left(\mathscr{F}_3+\mathscr{F}_4 +2\eta_0F_{1}'(\eta_0) -\frac{2\eta_0F_1(\eta_0)F_{1}'(\eta_0)}{F_n(\eta_0)} +4F_1(\eta_0)\right.\nonumber\\
&\hspace{1.7cm}\left.-\frac{2\eta_0\left(\frac{\partial\Xi}{\partial\kappa_1}\left(\kapsurf{\frac{n-1}{\eta_0}}\right)-\frac{F_1(\eta_0)}{F_n(\eta_0)}\frac{\partial\Xi}{\partial\kappa_n}\left(\kapsurf{\frac{n-1}{\eta_0}}\right)\right)}{(n-1)\Xi\left(\kapsurf{\frac{n-1}{\eta_0}}\right)}\left(\eta_0F_{1}'(\eta_0)-\frac{\eta_0F_1(\eta_0)F_{n}'(\eta_0)}{F_n(\eta_0)}+2F_1(\eta_0)\right)\right).\nonumber
\end{align}
Combining this with (\ref{D12Gbar0}) and (\ref{vD11Gbar0vw}) into equation (\ref{D2eta2}) gives
\begin{align}
\eta''(0)=& \frac{-\eta_0^3A^2}{12F_1(\eta_0)\left(2F_1(\eta_0)+\eta_0F_{1}'(\eta_0)-\frac{\eta_0F_1(\eta_0)F_{n}'(\eta_0)}{F_n(\eta_0)}\right)}\left(\frac{3F_1(\eta_0)}{n-1}\left(\mathscr{F}_3+\mathscr{F}_4 +2\eta_0F_{1}'(\eta_0) -\frac{2\eta_0F_1(\eta_0)F_{1}'(\eta_0)}{F_n(\eta_0)} +4F_1(\eta_0)\right)\right.\\
&\hspace{4cm}\left.-\frac{6\eta_0F_1(\eta_0)\left(\frac{\partial\Xi}{\partial\kappa_1}\left(\kapsurf{\frac{n-1}{\eta_0}}\right)-\frac{F_1(\eta_0)}{F_n(\eta_0)}\frac{\partial\Xi}{\partial\kappa_n}\left(\kapsurf{\frac{n-1}{\eta_0}}\right)\right)}{(n-1)^2\Xi\left(\kapsurf{\frac{n-1}{\eta_0}}\right)}\left(\eta_0F_{1}'(\eta_0)-\frac{\eta_0F_1(\eta_0)F_{n}'(\eta_0)}{F_n(\eta_0)}+2F_1(\eta_0)\right) \right.\nonumber\\
&\hspace{4cm} \left.+\left(\mathscr{F}_1+\frac{F_1(\eta_0)^2}{F_n(\eta_0)}\right)\left(\mathscr{F}_2-\frac{2F_1(\eta_0)^2}{F_n(\eta_0)}\right)\right)\nonumber
\end{align}
The formula (\ref{D2eta}) then follows by expanding and using that due to axial symmetry, (\ref{EleDef}) can be rewritten as $E_b(\kapsurf{u})=\sum_{a=0}c_a\left(\binom{n-1}{a}\kappa_1^a+\binom{n-1}{a-1}\kappa_1^{a-1}\kappa_n\right)$, with the derivatives given by $\frac{\partial\Xi}{\partial\kappa_1}\left(\kapsurf{u}\right)=\sum_{a=1}^nc_a\left(\binom{n-2}{a-1}\kappa_1(u)^{a-1}+\binom{n-2}{a-2}\kappa_1(u)^{a-2}\kappa_n(u)\right)$ and $\frac{\partial\Xi}{\partial\kappa_n}\left(\kapsurf{u}\right)=\sum_{a=1}^nc_a\binom{n-1}{a-1}\kappa_1(u)^{a-1}$.
\end{proof}

\begin{remark}
In the case where $F$ is homogeneous we found a sequence of bifurcation points, $(0,\eta_m)$. In this case by setting $\eta_0=\eta_m$ the same analysis shows that equations (\ref{Deta}) and (\ref{D2eta}) are still the correct formula. However, the analysis is less relevant here since there is already a strictly positive eigenvalue for $D_1\bar{G}(0,\eta_m)$.
\end{remark}

We are now able to give a stability theorem for the full flow.
\begin{corollary}\label{GenStable}
Let \descref{(A1)}-\descref{(A5)} be satisfied with $\tilde{R}=R_{crit}$. If
\begin{equation}
\mathscr{F} -\frac{6\sum_{a=1}^n\frac{c_a\eta_0^{a}}{(n-1)^{a}}\left(\binom{n-2}{a-1}-\frac{F_1(\eta_0)}{F_n(\eta_0)}\binom{n-1}{a-1}\right)}{(n-1)\sum_{a=0}^n\frac{c_a\eta_0^a}{(n-1)^a}\binom{n-1}{a}}\left(2F_1(\eta_0)+\eta_0F_{1}'(\eta_0) -\frac{\eta_0F_1(\eta_0)F_{n}'(\eta_0)}{F_n(\eta_0)}\right)>0,
\end{equation}
then the stationary solutions to (\ref{WVPCF}) that are close to the cylinder of radius $R_{crit}$ are unstable equilibria.

Alternatively if
\begin{equation}\label{FCond}
\mathscr{F} -\frac{6\sum_{a=1}^n\frac{c_a\eta_0^{a}}{(n-1)^{a}}\left(\binom{n-2}{a-1}-\frac{F_1(\eta_0)}{F_n(\eta_0)}\binom{n-1}{a-1}\right)}{(n-1)\sum_{a=0}^n\frac{c_a\eta_0^a}{(n-1)^a}\binom{n-1}{a}}\left(2F_1(\eta_0)+\eta_0F_{1}'(\eta_0) -\frac{\eta_0F_1(\eta_0)F_{n}'(\eta_0)}{F_n(\eta_0)}\right)<0,
\end{equation}
then the stationary solutions to (\ref{WVPCF}) that are close to the cylinder of radius $R_{crit}$ are stable under axially symmetric, weighted-volume preserving perturbations. That is, there exists $\epsilon>0$ such that for any $s\in(0,\epsilon)$ there exists a neighbourhood, $Z_s\subset h_{\frac{d}{dz}}^{2,\alpha}\left([0,d]\right)$, of $\tilde{\rho}(s)$, such that for any $\rho_0\in Z_s$ with $WVol(\rho_0)=WVol(\tilde{\rho}(s))$, the flow (\ref{WVPCFGraph}), with orthogonal boundary condition, exists for all time and the solution $\rho(t)$ converges exponentially fast to $\tilde{\rho}(s)$ as $t\rightarrow\infty$.
\end{corollary}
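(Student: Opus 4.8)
The plan is to recast the stability of $\tilde\rho(s)$ under the constrained full flow as a question of linearised stability for the reduced equation (\ref{RedWVPCF}) at the bifurcating equilibrium $\tilde u(s)$, and then to extract the sign of the decisive eigenvalue from the data $\eta'(0)=0$ and $\eta''(0)$ already produced in Theorem \ref{BifShape}, by means of the principle of exchange of stability. The first step is the reduction. By Corollary \ref{StatSol} we have $\tilde\rho(s)=\psi_{\eta_0}(\tilde u(s),\eta(s))|_{[0,d]}$, where $\tilde u(s)$ is an equilibrium of (\ref{RedWVPCF}) with parameter $\eta(s)$. If $\rho_0\in h^{2,\alpha}_{\frac{d}{dz}}([0,d])$ is close to $\tilde\rho(s)$ and $WVol(\rho_0)=WVol(\tilde\rho(s))$, then its even extension lies in $V_{\eta_0}$ and has the same value of $\fint_{\Sdp}Q(\cdot)\,dz$ as $u_{\tilde\rho(s)}$, so Corollary \ref{EquivEq} identifies the solution $\rho(t)$ of (\ref{WVPCFGraph}) with initial datum $\rho_0$ with a solution $\bar u(t)=P_0[u_{\rho(t)}]$ of (\ref{RedWVPCF}) having the \emph{same} parameter $\eta(s)$ and initial datum near $\tilde u(s)$ in $h^{2,\alpha}_{e,0}(\Sdp)$, and conversely $\rho(t)=\psi_{\eta_0}(\bar u(t),\eta(s))|_{[0,d]}$. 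Since $\psi_{\eta_0}$ is a smooth diffeomorphism, exponential decay of $\bar u(t)-\tilde u(s)$ in $h^{2,\alpha}_{e,0}$ is equivalent to exponential decay of $\rho(t)-\tilde\rho(s)$ in $h^{2,\alpha}$; so it suffices to prove that $\tilde u(s)$ is an exponentially asymptotically stable (respectively unstable) equilibrium of (\ref{RedWVPCF}) when (\ref{FCond}) (respectively the opposite strict inequality) holds.

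Next I would establish the spectral picture needed for linearised stability. As in the proof of Theorem \ref{MainRCyl}, $D_1\bar G_{\eta_0}(\tilde u(s),\eta(s))$ is a second-order uniformly elliptic operator on the circle, hence sectorial with compact resolvent, and it depends continuously on $s$ in the resolvent sense. From (\ref{D1Gbar02}) the spectrum of $D_1\bar G_{\eta_0}(0,\eta_0)$ on $h^{2,\alpha}_{e,0}(\Sdp)$ is $\{0\}\cup\{F_n(\eta_0)\tfrac{\pi^2}{d^2}(1-m^2):m\ge 2\}$, with $0$ a simple eigenvalue (spanned by $\hat v$) and the remaining eigenvalues bounded above by $-3F_n(\eta_0)\tfrac{\pi^2}{d^2}<0$ thanks to \descref{(A2)}. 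By upper semicontinuity of the spectrum, for $|s|$ small $D_1\bar G_{\eta_0}(\tilde u(s),\eta(s))$ has exactly one real eigenvalue $\gamma(s)$ near $0$, with $\gamma(0)=0$, while the rest of its spectrum stays in $\{\operatorname{Re}z<-2F_n(\eta_0)\tfrac{\pi^2}{d^2}\}$. Hence, by the principle of linearised stability and instability for quasilinear parabolic equations in little-H\"older spaces, $\tilde u(s)$ is exponentially asymptotically stable if $\gamma(s)<0$ and unstable if $\gamma(s)>0$.

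It remains to compute the sign of $\gamma(s)$ for small $s\neq0$. Since $\bar G_{\eta_0}(0,\eta)=0$ for every $\eta$ (cylinders are stationary) and the transversality condition (\ref{BifCond}) holds, the Principle of Exchange of Stability (Kielh\"ofer, \S I.7 of \cite{Kielhofer12}) applies at $(0,\eta_0)$: if $\mu(\eta)$ denotes the eigenvalue of $D_1\bar G_{\eta_0}(0,\eta)$ near $0$ along the trivial branch, then $\gamma(s)$ agrees up to higher order with $-\mu'(\eta_0)\,s\,\eta'(s)$, so using $\eta'(0)=0$ and $\eta'(s)=\eta''(0)s+o(s)$ one gets $\gamma(s)=-\mu'(\eta_0)\,\eta''(0)\,s^2+o(s^2)$, a definite sign whenever $\eta''(0)\neq0$, i.e.\ whenever (\ref{FCond}) or its strict reverse holds. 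From (\ref{D1Gbar02}), $\mu(\eta)=\tfrac{\eta^2F_1(\eta)}{n-1}-\tfrac{\pi^2}{d^2}F_n(\eta)$, so $\mu'(\eta_0)=\tfrac{\eta_0}{n-1}\big(2F_1(\eta_0)+\eta_0F_1'(\eta_0)-\tfrac{\eta_0F_1(\eta_0)F_n'(\eta_0)}{F_n(\eta_0)}\big)$, which is nonzero exactly by (\ref{BifCond}). Substituting the expression (\ref{D2eta}) for $\eta''(0)$ and cancelling the common factor $2F_1(\eta_0)+\eta_0F_1'(\eta_0)-\tfrac{\eta_0F_1(\eta_0)F_n'(\eta_0)}{F_n(\eta_0)}$, one finds that $\mu'(\eta_0)\,\eta''(0)$ equals $-\tfrac{\eta_0^4A^2}{12(n-1)}$ times the left-hand side of (\ref{FCond}); as $\tfrac{\eta_0^4A^2}{12(n-1)}>0$, this makes $\gamma(s)<0$ for small $s\neq0$ precisely when (\ref{FCond}) holds and $\gamma(s)>0$ precisely under the opposite strict inequality. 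Combined with the two preceding steps this yields the instability and stability conclusions, and the remaining quantitative assertions (existence of $Z_s$, preservation of $WVol$, exponential convergence in $h^{2,\alpha}$) follow by transporting the conclusion of the linearised-stability theorem on $h^{2,\alpha}_{e,0}(\Sdp)$ through the diffeomorphism $\psi_{\eta_0}$.

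The main obstacle is not a new computation — the delicate expansion of $\eta''(0)$ in (\ref{D2eta}) is already in hand from Theorem \ref{BifShape} — but the bookkeeping in the first two steps: making the reduction watertight, i.e.\ checking that fixing $WVol$ forces the reduced parameter to be exactly $\eta(s)$ so that the perturbed problem really is an honest fixed-parameter equilibrium problem to which Kielh\"ofer's machinery applies, and confirming the spectral continuity so that no eigenvalue other than $\gamma(s)$ can drift toward the imaginary axis. The one genuinely subtle analytic point is getting the sign right in the exchange-of-stability step — that $\mu'(\eta_0)\eta''(0)$ is a \emph{negative} rather than positive multiple of the left-hand side of (\ref{FCond}), which is precisely what makes the stability threshold coincide with (\ref{FCond}) and not its negation — and this is forced by the explicit formula (\ref{D2eta}) together with the expression for $\mu'(\eta_0)$ once the shared factor is seen to cancel.
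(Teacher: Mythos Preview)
Your proposal is correct and follows essentially the same route as the paper: reduce to the fixed-parameter problem on $h^{2,\alpha}_{e,0}(\Sdp)$ via $\psi_{\eta_0}$ and Corollary~\ref{EquivEq}, invoke the exchange-of-stability formulas from \cite[\S I.7]{Kielhofer12} together with $\eta'(0)=0$ and the expression (\ref{D2eta}) for $\eta''(0)$ to determine the sign of the critical eigenvalue $\lambda(s)$ (the paper writes $\lambda''(0)=-2\tilde v^*[D_{12}^2\bar G(0,\eta_0)[\hat v]]\,\eta''(0)$, which up to the positive factor $A/B$ is your $-2\mu'(\eta_0)\eta''(0)$), and then appeal to sectoriality and the linearised stability/instability theorems (Lunardi, Theorem~9.1.7) before transporting back through $\psi_{\eta_0}$. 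The only point where the paper is more explicit is in the instability part, where it constructs a nontrivial backward solution rather than simply citing a linearised-instability principle; otherwise the arguments coincide.
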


\begin{proof}
We start by again noting that the eigenvalues of $D_1\bar{G}(0,\eta_0)$, except for the dominant one, lie in the open complex halfplane, $Re\left(\lambda\right)<0$. Through a perturbation argument this is also true for the operator $D_1\bar{G}(\tilde{u}(s),\eta(s))$ as long as $s$ is small. We now determine the sign of the dominant eigenvalue of $D_1\bar{G}(\tilde{u}(s),\eta(s))$ for $s$ small. By Proposition I.7.2 in \cite{Kielhofer12}, there exists $\epsilon\in(0,\delta)$ and a continuously differentiable curve:
\begin{equation*}
\{\lambda(s):|s|<\epsilon,\lambda_{0}=0\}\subset\Rone,
\end{equation*}
such that
\begin{equation*}
D_1\bar{G}(\tilde{u}(s),\eta(s))[\hat{v}+v(s)]=\lambda(s)(\hat{v}+v(s)),
\end{equation*}
where $v(s)$, for $|s|<\epsilon$, is a continuously differentiable curve in range of $D_1\bar{G}(\tilde{u}(s),\eta(s))$ satisfying $v(0)=0$. Also, since $\eta'(0)=0$, we can use equations (I.7.34), (I.7.40) and (I.7.45) in \cite{Kielhofer12} to conclude that $\frac{d\lambda}{ds}(0)=0$ and
\begin{align}\label{D2lambda}
\frac{d^2\lambda}{ds^2}(0)=&-2\tilde{v}^*\left[D_{12}^2\bar{G}(0,\eta_0)\left[\hat{v}\right]]\right]\eta''(0)\nonumber\\
=&\frac{\eta_0^4A^3}{6(n-1)B}\left(\mathscr{F} -\frac{6\sum_{a=1}^n\frac{c_a\eta_0^{a}}{(n-1)^{a}}\left(\binom{n-2}{a-1}-\frac{F_1(\eta_0)}{F_n(\eta_0)}\binom{n-1}{a-1}\right)}{(n-1)\sum_{a=0}^n\frac{c_a\eta_0^a}{(n-1)^a}\binom{n-1}{a}}\left(2F_1(\eta_0)+\eta_0F_{1}'(\eta_0) -\frac{\eta_0F_1(\eta_0)F_{n}'(\eta_0)}{F_n(\eta_0)}\right)\right).
\end{align}

In the first case we see from equation (\ref{D2lambda}) that $\lambda(0)=0$ is a local minimum of $\lambda(s)$ and hence, possibly making $\epsilon$ smaller, the eigenvalue $\lambda(s)$ is positive for $0<|s|<\epsilon$. We also note that $D_1\bar{G}(0,\eta_0)[\bar{v}]$ is the negative of an elliptic operator, so by Theorem 3.2.6 in \cite{HartleyPhD} it is a sectorial operator on the little-H\"older spaces. The perturbation result in Proposition 2.4.2 \cite{Lunardi95} then ensures that $D_1\bar{G}(\tilde{u}(s),\eta(s))$ is sectorial on the little-H\"older spaces for all $|s|<\epsilon$ (again possibly making $\epsilon$ smaller).

From (\ref{lHInterp}) we know that the little-H\"older spaces are interpolation spaces and we can apply Theorem 9.1.7 in \cite{Lunardi95} to obtain a nontrivial backward solution, $\bar{u}(t)$, of (\ref{RedWVPCF}) with $\eta=\eta(s)$ such that:
\begin{equation*}
\left\|\bar{u}(t)-\tilde{u}(s)\right\|_{h^{2,\alpha}}\leq Ce^{\omega t},\ t\leq0,
\end{equation*}
where $C,\omega>0$. By setting $\rho(t):=\psi\left(\bar{u}(t),\eta(s)\right)|_{[0,d]}$ we obtain a nontrivial backward solution to (\ref{WVPCFGraph}) such that
\begin{align*}
\left\|\rho(t)-\tilde{\rho}(s)\right\|_{h^{2,\alpha}}=&\left\|\psi\left(\bar{u}(t),\eta(s)\right)|_{[0,d]}-\psi\left(\tilde{u}(s),\eta(s)\right)|_{[0,d]}\right\|_{h^{2,\alpha}}\\
\leq&\left\|\psi\left(\bar{u}(t),\eta(s)\right)-\psi\left(\tilde{u}(s),\eta(s)\right)\right\|_{h^{2,\alpha}}\\
\leq& C\left\|\bar{u}(t)-\tilde{u}(s)\right\|_{h^{2,\alpha}}\\
\leq &Ce^{\omega t},\ t\leq0,
\end{align*}
where we have used that $\psi$ is Lipschitz. Thus we have instability of the stationary solution.

However in the second case we see that $\lambda(0)=0$ is a local maximum of $\lambda(s)$ and hence the eigenvalue $\lambda(s)$ is negative for $0<|s|<\epsilon$. We can therefore prove stability of the hypersurface defined by $\rho(s)$ by applying Theorem 9.1.7 in \cite{Lunardi95}. There exist $C,r,\omega>0$ such that if $\|\bar{u}_0-\tilde{u}(s)\|_{h^{2,\alpha}}<r$ then the solution, $\bar{u}(t)$, of (\ref{RedWVPCF}) with $\eta=\eta(s)$ and initial condition $\bar{u}_0$ is defined for all $t\geq0$ and satisfies
\begin{equation}\label{Fordecay}
\left\|\bar{u}(t)-\tilde{u}(s)\right\|_{h^{2,\alpha}}+\left\|\bar{u}'(t)\right\|_{h^{0,\alpha}}\leq Ce^{-\omega t}\left\|\bar{u}_0-\tilde{u}(s)\right\|_{h^{2,\alpha}},\ t\geq0.
\end{equation}

By now considering $\rho_0$ such that $\left\|\rho_0-\tilde{\rho}(s)\right\|_{h^{2,\alpha}}<\frac{r}{4}$ and $WVol(\rho_0)=WVol(\tilde{\rho}(s))$, then we have
\begin{align*}
\left\|P_0[u_{\rho_0}]-\tilde{u}(s)\right\|_{h^{2,\alpha}}=&\left\|P_0\left[u_{\rho_0}-\psi\left(\tilde{u}(s),\eta(s)\right)\right]\right\|_{h^{2,\alpha}}\\
\leq&2\left\|u_{\rho_0}-\psi\left(\tilde{u}(s),\eta(s)\right)\right\|_{h^{2,\alpha}}\\
\leq&4\left\|\rho_0-\psi\left(\tilde{u}(s),\eta(s)\right)|_{[0,d]}\right\|_{h^{2,\alpha}}\\
<&r.
\end{align*}
So, by the above calculations, there is a solution of (\ref{RedWVPCF}) with $\eta=\eta(s)$ and $\bar{u}(0)=P_0[u_{\rho_0}]$, $\bar{u}(t)$, that satisfies (\ref{Fordecay}). By setting $\rho(t)=\psi\left(\bar{u}(t),\eta(s)\right)|_{[0,d]}$ we obtain a solution to (\ref{WVPCFGraph}) with $\rho(0)=\psi\left(P_0[u_{\rho_0}],\eta(s)\right)|_{[0,d]}=u_{\rho_0}|_{[0,d]}=\rho_0$ such that
\begin{align*}
\left\|\rho(t)-\tilde{\rho}(s)\right\|_{h^{2,\alpha}}=&\left\|\psi\left(\bar{u}(t),\eta(s)\right)|_{[0,d]}-\psi\left(\tilde{u}(s),\eta(s)\right)|_{[0,d]}\right\|_{h^{2,\alpha}}\\
\leq&\left\|\psi\left(\bar{u}(t),\eta(s)\right)-\psi\left(\tilde{u}(s),\eta(s)\right)\right\|_{h^{2,\alpha}}\\
\leq& C\left\|\bar{u}(t)-\tilde{u}(s)\right\|_{h^{2,\alpha}}
\end{align*}
Therefore from (\ref{Fordecay}):
\begin{align*}
\left\|\rho(t)-\tilde{\rho}(s)\right\|_{h^{2,\alpha}}\leq& Ce^{-\omega t}\left\|P_0[u_{\rho_0}]-\tilde{u}(s)\right\|_{h^{2,\alpha}},\ t\geq0.
\end{align*}
Thus the hypersurface defined by $\tilde{\rho}(s)$ is a stable stationary solution of (\ref{WVPCF}) under axially symmetric, weighted-volume preserving perturbations.
\end{proof}


\section{Mixed-Volume Preserving Mean Curvature Flow}\label{SecMVPMCF}
In this section we consider the specific case of the mixed-volume preserving mean curvature flow (including the classical volume preserving mean curvature flow). In this case we have $F\left(\bm{\kappa}\right)=\sum_{a=1}^n\kappa_a$ and $\Xi\left(\bm{\kappa}\right)=E_b\left(\bm{\kappa}\right)$ (i.e. $c_a=1$ for $a=b$ and $c_a=0$ otherwise). Therefore $k=1$, $F_1=F_n=1$, and $F_{nn}=F_{nnn}=0$. Note that for assumption \descref{(A3)} to be satisfied we must exclude the case $b=n$ but in the other cases, $b=1,\ldots,n-1$, it is satisfied for any $\tilde{R}\in\Rone^+$. The stationary solutions to the flow are CMC hypersurfaces and the family of (mostly) non-cylindrical stationary solutions found in Corollary \ref{StatSol} represent the unduloids, with the cylindrical element of the family having radius $\frac{d\sqrt{n-1}}{\pi}$. In this case condition (\ref{HomogCond}) reduces to
\begin{equation}\label{FCondMV}
\frac{-\left(n^3-(b+10)n^2+2(5b-1)n-2b(3b-4)\right)}{(n-b)}<0.
\end{equation}
Further cancellation occurs in the volume preserving ($b=0$) and surface area-preserving ($b=1$) cases. In both situations the condition reduces to $n^2-10n-2>0$. For $b\geq2$ the cubic $n^3-(b+10)n^2+2(5b-1)n-2b(3b-4)$ has a single real root, which is also positive, and two complex roots. We now define this root: for $b=2,\ldots,n-1$ the real root of $n^3-(b+10)n^2+2(5b-1)n-2b(3b-4)$ is
\begin{align}
\gamma(b):=&\frac{1}{3}\left(b+10+\frac{b^2-10b+106}{\left(b^3+66b^2-249b+1090+9\sqrt{2b^5+40b^4-288b^3+1733b^2-2540b-36}\right)^{\frac{1}{3}}}\right.\nonumber\\
&\hspace{0.4cm}\left.+\left(b^3+66b^2-249b+1090+9\sqrt{2b^5+40b^4-288b^3+1733b^2-2540b-36}\right)^{\frac{1}{3}}\right)\nonumber
\end{align}

\begin{corollary}\label{MVStable}
The unduloids are stable, with respect to the $(n+1-b)$\textsuperscript{th} mixed-volume preserving mean curvature flow, under $(n+1-b)$\textsuperscript{th} mixed-volume preserving, axially symmetric perturbations in the following cases:
\begin{itemize}
	\item For $b=0,\ldots,3$ and $n\geq11$
	\item For $b=4,5$ and $n\geq12$
	\item For $b=6,7,8$ and $n\geq b+7$
	\item For $b\geq9$ and $n\geq b+6$.
\end{itemize}
Otherwise they are unstable.
\end{corollary}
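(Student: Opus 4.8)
The plan is to obtain Corollary \ref{MVStable} from Corollary \ref{HomogCor} as a sign analysis of the cubic in condition (\ref{FCondMV}). For the mixed-volume preserving mean curvature flow we have $F(\bm\kappa)=\sum_a\kappa_a$ and $\Xi(\bm\kappa)=E_b(\bm\kappa)$, so all of \descref{(A1)}--\descref{(A5)*} hold: $F$ is smooth, symmetric and homogeneous of degree $1$ with $F_1=F_n=1\ne0$ and $F_{nn}=F_{nnn}=0$, and $\Xi(\bm\kappa_{\tilde R})=\binom{n-1}{b}\tilde R^{-b}>0$ for $0\le b\le n-1$. The relevant cylinder, of radius $\frac{d\sqrt{n-1}}{\pi}$, is the $m=1$ member of the family from Corollary \ref{StatSolHomog} (the cylinders with $m\ge2$, and the unduloids near them, carry a strictly positive eigenvalue of $D_1\bar G(0,\eta_m)$ and are unstable, cf.\ Remark \ref{MainRem}). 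With these substitutions $c_a=\delta_{ab}$, $k=1$, and (\ref{HomogCond}) becomes (\ref{FCondMV}); since $0\le b\le n-1$ gives $n-b>0$, (\ref{FCondMV}) is equivalent to $P(n):=n^3-(b+10)n^2+2(5b-1)n-2b(3b-4)>0$. Hence, by Corollary \ref{GenStable}, the unduloids near the critical cylinder are stable when $P(n)>0$ and unstable when $P(n)<0$, and the whole statement reduces to determining, for each admissible $b$, the integers $n\ge\max(2,b+1)$ with $P(n)>0$.

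First I would treat $b=0,1$ separately: here $P$ factors as $P(n)=n(n^2-10n-2)$ and $P(n)=(n-1)(n^2-10n-2)$ respectively, so for $n\ge2$ the sign of $P(n)$ equals that of $n^2-10n-2$, which is positive precisely when $n>5+3\sqrt3\approx10.196$, i.e.\ $n\ge11$; thus stability holds for $n\ge11$ and instability for $2\le n\le10$. For $b\ge2$ the excerpt already records that $P$ has a single real root $\gamma(b)>0$, so $P(n)>0\iff n>\gamma(b)\iff n\ge\lceil\gamma(b)\rceil$, and it remains only to locate $\gamma(b)$ between consecutive integers; write $n_0(b):=\lceil\gamma(b)\rceil$ for the resulting threshold.

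The device that makes the infinitely many cases uniform is the elementary identity, obtained by expanding $P(b+j)$,
\begin{equation*}
P(b+j)=(j-6)b^2+(2j^2-10j+6)b+(j^3-10j^2-2j).
\end{equation*}
From it, $P(b+5)=-b^2+6b-135<0$ for every $b$ (discriminant $36-540<0$, leading coefficient negative), $P(b+6)=18b-156$, and $P(b+7)=b^2+34b-161$. Consequently, for $b\ge9$ we have $P(b+5)<0<P(b+6)$, so $\gamma(b)\in(b+5,b+6)$ and $n_0(b)=b+6$; for $b\in\{6,7,8\}$ we have $P(b+6)=18b-156<0$ while $P(b+7)\ge79>0$, so $\gamma(b)\in(b+6,b+7)$ and $n_0(b)=b+7$. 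The remaining finitely many values are settled by direct evaluation: $P(10)=-28,\,P(11)=69$ for $b=2$ and $P(10)=-50,\,P(11)=36$ for $b=3$ (so $\gamma(b)\in(10,11)$, $n_0(b)=11$); $P(11)=-9,\,P(12)=104$ for $b=4$ and $P(11)=-66,\,P(12)=34$ for $b=5$ (so $\gamma(b)\in(11,12)$, $n_0(b)=12$). In every case $\gamma(b)$, and likewise $5+3\sqrt3$, lies strictly between consecutive integers, so $P$ vanishes at no admissible $n$; and since $n_0(b)\ge b+2$ throughout, the range $\{b+1,\dots,n_0(b)-1\}$ is nonempty with $P<0$ on it. Collecting the thresholds $11,11,11,11,12,12,13,14,15$ for $b=0,\dots,8$ and $b+6$ for $b\ge9$ yields exactly the four listed cases, with instability otherwise.

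Since everything reduces to evaluating an explicit cubic, there is no analytic obstacle; the only care needed is bookkeeping: verifying that the regimes abut correctly (e.g.\ the $b=8$ threshold $b+7=15$ agrees with the $b=9$ threshold $b+6=15$), that $b=4$ is properly grouped with $b=5$ at threshold $12$ — note $12=b+8$ when $b=4$, so this is \emph{not} of the form $b+7$ there — and that no admissible integer $n$ makes $P(n)=0$, so that the dichotomy of Corollary \ref{GenStable} applies at every point.
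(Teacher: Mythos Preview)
Your proposal is correct and follows the same approach as the paper: reduce to the sign of the cubic $P(n)=n^3-(b+10)n^2+2(5b-1)n-2b(3b-4)$, handle $b=0,1$ via the factor $n^2-10n-2$, and for $b\ge2$ bracket the unique real root $\gamma(b)$ between consecutive integers. Your device of expanding $P(b+j)$ to treat all $b\ge6$ uniformly by sign changes is in fact a bit more transparent than the paper's version, which simply evaluates the closed-form expression for $\gamma(b)$ at $b=2,\ldots,8$ and asserts $b+5<\gamma(b)<b+6$ for $b\ge9$ without further justification.
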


\begin{proof}
This follows from Corollary \ref{GenStable} and we just check that (\ref{FCondMV}) is satisfied, note from its structure it is clear that the left hand side of (\ref{FCondMV}) will be negative for $n$ large enough. If $b=0,1$ then we have equality in (\ref{FCondMV}) when $n=5\pm3\sqrt{3}$, only one of which is positive and is between $10$ and $11$. For the cases of $b\geq2$ the condition is satisfied for $n>\gamma(b)$, which we evaluate for $b=2,\ldots,8$ and note that for $b\geq9$ we have $b+5<\gamma(b)<b+6$.
\end{proof}


\section{Geometric Construction}\label{SecGeoCons}
In this section we consider an alternative method for constructing the bifurcation curves of stationary solutions to the mixed-volume preserving mean curvature flow. We will use a representation of the axially symmetric CMC hypersurfaces to calculate the mixed-volume of such hypersurfaces and hence explicitly give a formula for $\eta(s)$.

The $n$-dimensional axially symmetric CMC hypersurfaces were studied in \cite{Hsiang81}, where the profile curve, $\rho(z)$, was shown to satisfy:
\begin{equation*}
z=\int_{\rho(0)}^{\rho} \frac{1}{\sqrt{\left(\frac{x^{n-1}}{C+\frac{H}{n}x^n}\right)^2-1}}\,dx,
\end{equation*}
where $C$ is a constant and $H$ is the mean curvature of the hypersurface. We note that for this representation the cylinders can only be treated through limits. Similarly, we can only treat the unduloids with half a period, i.e. when $m=1$. However, the formulas proved here can easily be extended to any amount of periods.

To obtain the hypersurfaces that satisfy the orthogonal boundary conditions we set $\left.\frac{d\rho}{dz}\right|_{z=0}=\left.\frac{d\rho}{dz}\right|_{z=d}=0$ and we will also define $s:=\frac{\rho(d)-\rho(0)}{\rho(d)+\rho(0)}$. This leads to the formulas:
\begin{equation*}
z=\rho_{0,s}\int_{1}^{\frac{\rho_s}{\rho_{0,s}}} \frac{1}{\sqrt{\left(\frac{\bar{x}^{n-1}\left((1+s)^{n}-(1-s)^{n}\right)}{2s(1+s)^{n-1}+\left((1+s)^{n-1}-(1-s)^{n-1}\right)(1-s)\bar{x}^n}\right)^2-1}}\,d\bar{x},
\end{equation*}
where we have used the change of variables $x=\rho_s(0)\bar{x}$ and have set $\rho_{0,s}:=\rho_s(0)$, which is obtained by evaluating at $z=d$ and using $\frac{\rho_s(d)}{\rho_s(0)}=\frac{1+s}{1-s}$:
\begin{equation*}
\rho_{0,s}=d\left(\int_{1}^{\frac{1+s}{1-s}} \frac{1}{\sqrt{\left(\frac{\bar{x}^{n-1}\left((1+s)^{n}-(1-s)^{n}\right)}{2s(1+s)^{n-1}+\left((1+s)^{n-1}-(1-s)^{n-1}\right)(1-s)\bar{x}^n}\right)^2-1}}\,d\bar{x}\right)^{-1}.
\end{equation*}
The mean curvature of the hypersurface is also easily obtained:
\begin{equation*}
H=\left(\frac{(1+s)^{n-1}-(1-s)^{n-1}}{(1+s)^n-(1-s)^n}\right)\frac{n(1-s)}{\rho_{0,s}}.
\end{equation*}

\begin{lemma}\label{BifCurvForm}
The bifurcation curve $\eta(s)$ is given by the formula
\begin{equation*}
\eta(s)=\frac{n-1}{d}\left(\frac{\left(\int_1^{\frac{1+s}{1-s}}g_s(x)\,dx\right)^{n+1}}{\int_1^{\frac{1+s}{1-s}}x^ng_s(x)\,dx}\right)^{\frac{1}{n}},
\end{equation*}
for the VPMCF and by
\begin{equation*}
\eta(s)=\frac{n-1}{d}\left(\frac{\left(\int_1^{\frac{1+s}{1-s}}g_s(x)\,dx\right)^{n+1-b}}{\int_1^{\frac{1+s}{1-s}}\frac{x^{n-b}g_s(x)}{\left(1+g_s(x)^{-2}\right)^{\frac{b-2}{2}}}+\frac{(b-1)x^{n+1-b}g_s'(x)}{(n+1-b)g_s(x)^2\left(1+g_s(x)^{-2}\right)^{\frac{b}{2}}}\,dx}\right)^{\frac{1}{n-b}},
\end{equation*}
for the $(n+1-b)$\textsuperscript{th} mixed-volume preserving mean curvature flow, where
\begin{equation*}
g_s(x)=\frac{1}{\sqrt{\left(\frac{x^{n-1}\left((1+s)^n-(1-s)^n\right)}{2s(1+s)^{n-1}+\left((1+s)^{n-1}-(1-s)^{n-1}\right)(1-s)x^n}\right)^2-1}}.
\end{equation*}
\end{lemma}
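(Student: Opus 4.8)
The plan is to recognise the bifurcation family of Corollary~\ref{StatSol}, specialised to $F(\bm{\kappa})=\sum_{a=1}^n\kappa_a$, $\Xi(\bm{\kappa})=E_b(\bm{\kappa})$ and $m=1$, as the one-period family of CMC unduloids parametrised above, and then to read off $\eta$ along it from the defining property of $\psi_{\eta_0}$ in Lemma~\ref{defpsi}. For these $F$ and $\Xi$ a stationary solution of \eqref{WVPCF} meeting $\partial W$ orthogonally is precisely an axially symmetric CMC hypersurface in $W$, so both curves are arcs of the same one-parameter family through the cylinder of radius $R_{crit}=\frac{d\sqrt{n-1}}{\pi}$. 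Since $\tilde{u}'(0)$ is a nonzero multiple of $\cos(\pi z/d)$, the geometric parameter $\frac{\rho(d)-\rho(0)}{\rho(d)+\rho(0)}$ is, to leading order, a nonzero multiple of $s$, hence a valid reparametrisation of the bifurcation parameter near $0$; combined with the strict monotonicity of $s\mapsto\frac{1+s}{1-s}$ this allows the geometric $s$ to be used throughout. It therefore suffices to express $\eta$ as a function of the hypersurface.

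By Lemma~\ref{defpsi}, along the stationary family $\eta(s)=\tilde{Q}^{-1}\big(\fint_{\Sdp}Q(u_s)\,dz\big)$ with $u_s$ the circle extension of $\tilde{\rho}(s)$; equivalently, the cylinder of radius $\frac{n-1}{\eta(s)}$ carries the same weighted volume as the unduloid $\tilde{\rho}(s)$. With $c_a=\delta_{ab}$ one has $\tilde{Q}(\eta)=(n-1)^{n-b}\binom{n}{b}\eta^{-(n-b)}$, a single power of $\eta$, so $\tilde{Q}^{-1}$ is explicit; feeding in the identities for $WVol$ from Section~\ref{SecRedEq} and \eqref{MixedV} gives
\[
\eta(s)=(n-1)\left(\frac{(n+1-b)\,d\,\omega_n}{(n+1)\,V_{n+1-b}(\tilde{\rho}(s))}\right)^{\frac{1}{n-b}},
\]
uniformly for $b=0,\ldots,n-1$ (with $V_{n+1}=Vol$ in the VPMCF case $b=0$). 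The remaining task is to evaluate the mixed volume of the explicit profile $\rho_s$.

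Here I would change variables from the height $z$ to $x=\rho/\rho_{0,s}$: differentiating the profile integral gives $dz=\rho_{0,s}g_s(x)\,dx$ and $\rho'=\frac{d\rho}{dz}=g_s(x)^{-1}$, hence $\sqrt{1+\rho'^2}=\sqrt{1+g_s(x)^{-2}}$ and $\rho''=-g_s'(x)/(\rho_{0,s}g_s(x)^3)$. For the VPMCF, $Vol=\omega_n\int_0^d\rho^n\,dz=\omega_n\rho_{0,s}^{\,n+1}\int_1^{\frac{1+s}{1-s}}x^n g_s(x)\,dx$, and inserting $\rho_{0,s}=d\big(\int_1^{\frac{1+s}{1-s}}g_s\,dx\big)^{-1}$ gives the first formula. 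For the $(n+1-b)$\textsuperscript{th} mixed-volume flow, axial symmetry gives $E_{b-1}(\kapsurf{\rho})=\binom{n-1}{b-1}\kappa_1^{b-1}+\binom{n-1}{b-2}\kappa_1^{b-2}\kappa_n$ with $\kappa_1=\frac{1}{\rho\sqrt{1+\rho'^2}}$ and $\kappa_n=-\frac{\rho''}{(1+\rho'^2)^{3/2}}$, so $V_{n+1-b}(\rho_s)$ equals, up to the constant $\frac{n\omega_n}{(n+1)\binom{n}{b-1}}$, the integral $\int_0^d E_{b-1}(\kapsurf{\rho})\rho^{n-1}\sqrt{1+\rho'^2}\,dz$; substituting the expressions above turns the $\kappa_1^{b-1}$ term into $\frac{x^{n-b}g_s}{(1+g_s^{-2})^{(b-2)/2}}$ and the $\kappa_1^{b-2}\kappa_n$ term into a constant times $\frac{x^{n+1-b}g_s'}{g_s^2(1+g_s^{-2})^{b/2}}$, and the identity $\binom{n-1}{b-2}/\binom{n-1}{b-1}=\frac{b-1}{n+1-b}$ produces precisely the integrand in the statement. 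Substituting $\rho_{0,s}=d\big(\int g_s\,dx\big)^{-1}$ back into the formula for $\eta$ and simplifying with $\frac{n\binom{n-1}{b-1}}{(n+1)\binom{n}{b-1}}=\frac{n+1-b}{n+1}$ collapses every constant to $\frac{n-1}{d}$.

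The step I expect to be the main obstacle is this final computation: keeping track of the powers of $\rho_{0,s}$ and of $1+g_s^{-2}$ alongside the several binomial factors so that everything collapses exactly to the stated integrand and the prefactor $\frac{n-1}{d}$. A secondary point, easier but worth checking, is the identification of the abstract bifurcation family with the explicit unduloid family and the verification that $s$ remains a good coordinate along the entire arc rather than only infinitesimally, which follows from the strict monotonicity of $s\mapsto\frac{1+s}{1-s}$.
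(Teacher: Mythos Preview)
Your proposal is correct and follows essentially the same approach as the paper: both invoke the identity $\eta(s)=\tilde{Q}^{-1}\bigl(\fint_{\Sdp}Q(u_{\rho_s})\,dz\bigr)$ from Lemma~\ref{defpsi}, invert $\tilde{Q}(\eta)=\binom{n}{b}\bigl(\tfrac{n-1}{\eta}\bigr)^{n-b}$ explicitly, and evaluate $\fint Q(\rho_s)$ via the change of variables $x=\rho/\rho_{0,s}$ with $dz=\rho_{0,s}g_s(x)\,dx$, $\rho'=g_s^{-1}$, $\rho''=-g_s'/(\rho_{0,s}g_s^3)$, before substituting $\rho_{0,s}=d\bigl(\int g_s\,dx\bigr)^{-1}$. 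Your intermediate expression in terms of $V_{n+1-b}$ is a harmless reformulation of the paper's direct computation with $Q$, and your discussion of why the geometric parameter $s$ reparametrises the abstract bifurcation curve makes explicit a point the paper leaves implicit in the setup of Section~\ref{SecGeoCons}.
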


\begin{proof}
We first note that for the mixed-volume preserving flows $\tilde{Q}(\eta)=\binom{n}{b}\left(\frac{n-1}{\eta}\right)^{n-b}$, so that
\begin{equation}\label{QtilInv}
\tilde{Q}^{-1}(x)=(n-1)\left(\frac{\binom{n}{b}}{x}\right)^{\frac{1}{n-b}}.
\end{equation}

Now we calculate $\fint Q(u)\,dz$ for the unduloids:
\begin{equation*}
\fint_{\Sdp}Q\left(u_{\rho_{s}}\right)\,dz=\fint_0^dQ\left(\rho_{s}\right)\,dz=\left\{\begin{array}{ll} \frac{1}{d}\int_0^d \rho_{s}(z)^n\,dz & b=0\\ \frac{n}{db}\int_0^d\frac{\binom{n-1}{b-1}\rho_{s}(z)^{n-b}}{\left(1+\rho_{s}'(z)^2\right)^{\frac{b-2}{2}}} -\frac{\binom{n-1}{b-2}\rho_{s}(z)^{n+1-b}\rho_{s}''(z)}{\left(1+\rho_{s}'(z)^2\right)^{\frac{b}{2}}}\,dz & b\neq0.\end{array}\right.
\end{equation*}
Using the substitution $z(x)=\rho_{0,s}\int_1^xg_s(y)\,dy$, we have that $\rho_{s}\left(z(x)\right)=\rho_{0,s}y$, $\frac{dz}{dx}=\rho_{0,s}g_s(x)$ and
\begin{equation*}
\fint_{\Sdp}Q\left(u_{\rho_{s}}\right)\,dz=\left\{\begin{array}{ll} \frac{\rho_{0,s}^{n+1}}{d}\int_1^{\frac{1+s}{1-s}}x^ng_s(x)\,dx & b=0\\ \frac{\binom{n}{b}\rho_{0,s}^{n+1-b}}{d}\int_1^{\frac{1+s}{1-s}}\frac{x^{n-b}}{\left(1+g_s(x)^{-2}\right)^{\frac{b-2}{2}}} +\frac{(b-1)x^{n+1-b}g_s'(x)}{(n+1-b)\left(1+g_s(x)^{-2}\right)^{\frac{b}{2}}g_s(x)^3}\,dx & b\neq0.\end{array}\right.
\end{equation*}
From the second point of Lemma \ref{defpsi} $\fint_{\Sdp}Q\left(u_{\rho_{s}}\right)\,dz=\tilde{Q}(\eta_1(s))$, so by using (\ref{QtilInv}) and the fact that $\rho_{0,s}=d\left(\int_0^{\frac{1+s}{1-s}}g_s(x)\,dx\right)^{-1}$ we obtain the result.
\end{proof}

\begin{figure}[ht]
        \centering
        \begin{subfigure}[b]{0.3\textwidth}
                \centering
                \includegraphics[width=\textwidth]{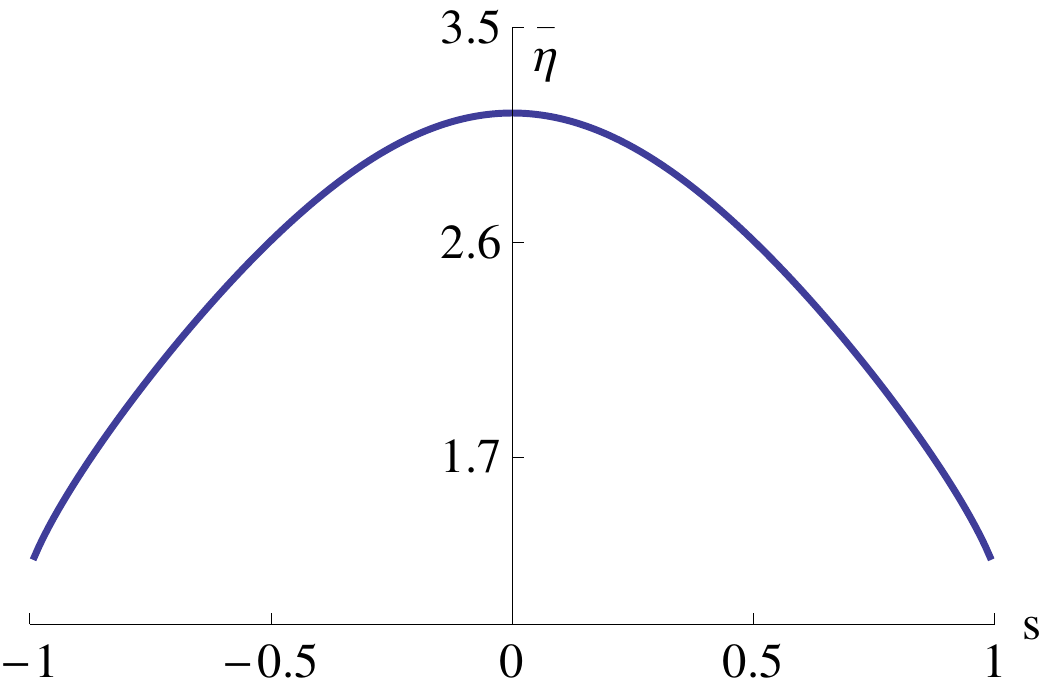}
                \caption{$\bar{\eta}(s)$ in dimension $n=2$}
                \label{fig:Vol2}
        \end{subfigure}%
        ~ 
        \begin{subfigure}[b]{0.3\textwidth}
                \centering
                \includegraphics[width=\textwidth]{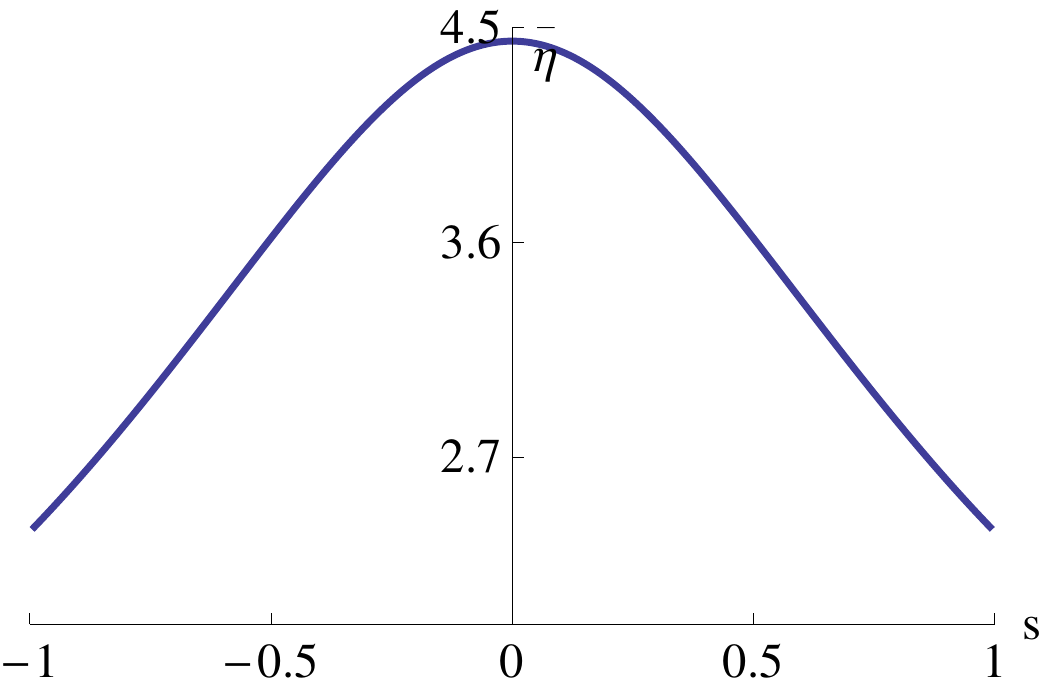}
                \caption{$\bar{\eta}(s)$ in dimension $n=3$}
                \label{fig:Vol3}
        \end{subfigure}
        ~ 
        \begin{subfigure}[b]{0.3\textwidth}
                \centering
                \includegraphics[width=\textwidth]{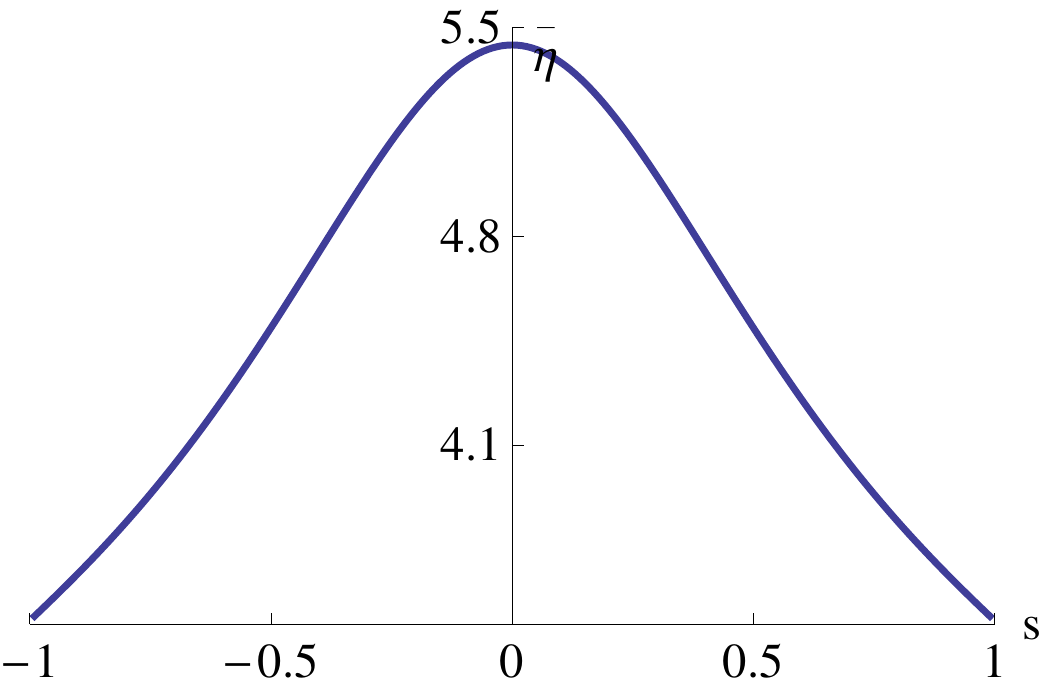}
                \caption{$\bar{\eta}(s)$ in dimension $n=4$}
                \label{fig:Vol4}
        \end{subfigure}
      \\
        \begin{subfigure}[b]{0.3\textwidth}
                \centering
                \includegraphics[width=\textwidth]{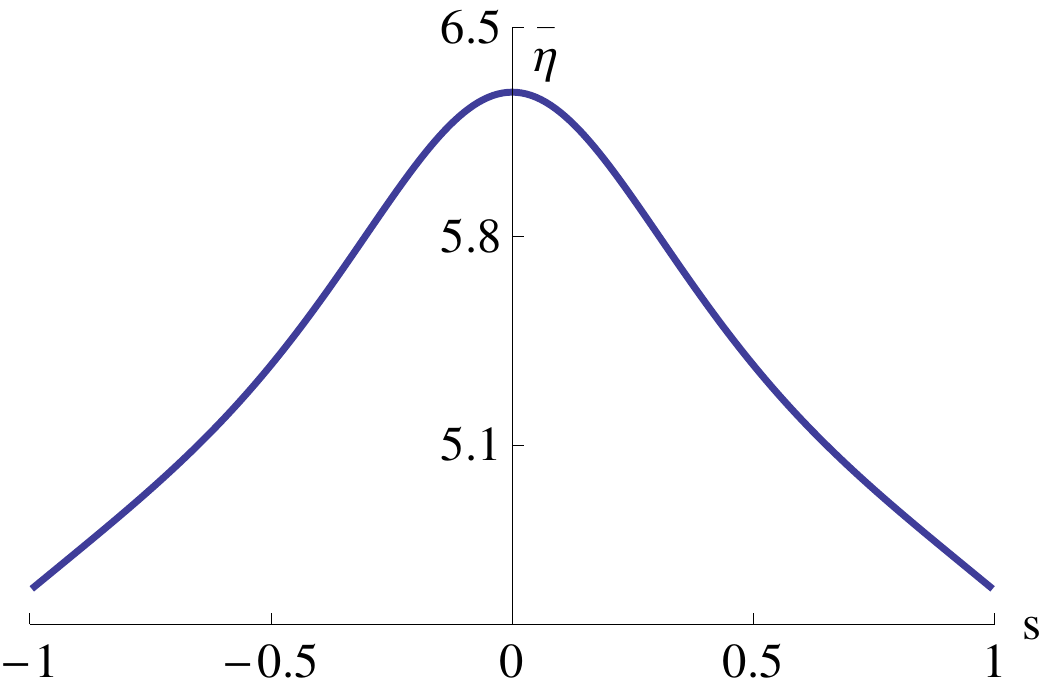}
                \caption{$\bar{\eta}(s)$ in dimension $n=5$}
                \label{fig:Vol5}
        \end{subfigure}%
        ~ 
        \begin{subfigure}[b]{0.3\textwidth}
                \centering
                \includegraphics[width=\textwidth]{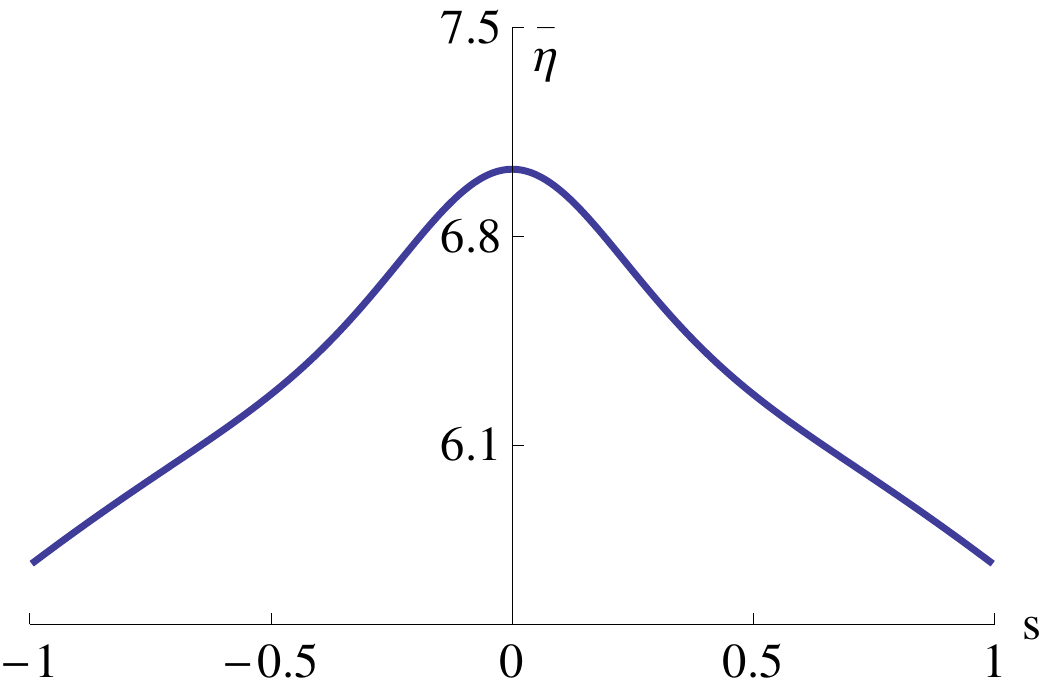}
                \caption{$\bar{\eta}(s)$ in dimension $n=6$}
                \label{fig:Vol6}
        \end{subfigure}
        ~ 
        \begin{subfigure}[b]{0.3\textwidth}
                \centering
                \includegraphics[width=\textwidth]{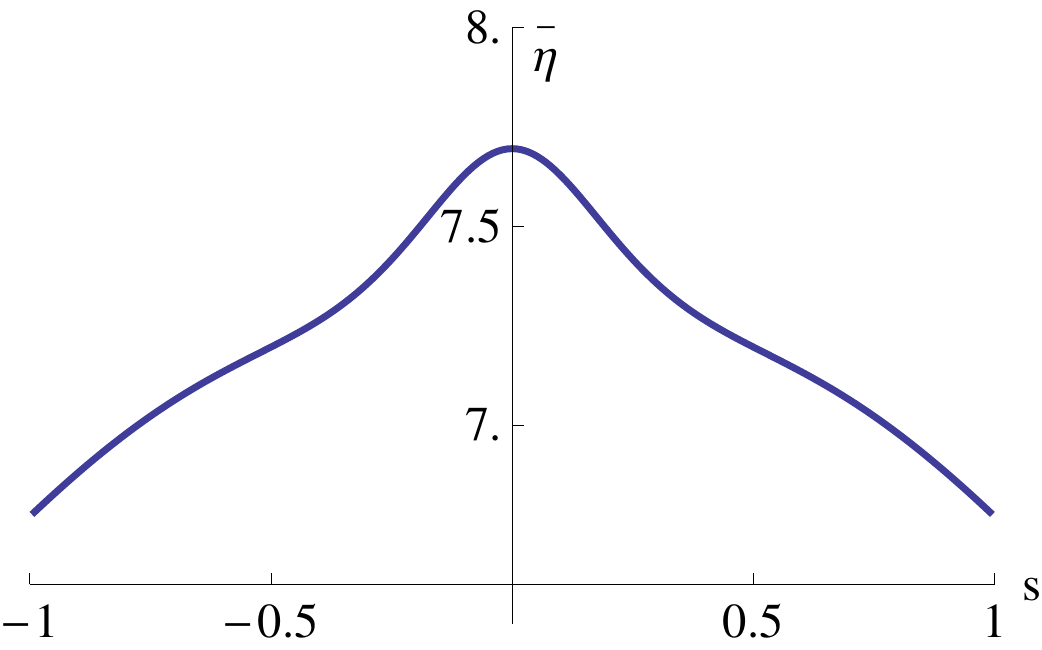}
                \caption{$\bar{\eta}(s)$ in dimension $n=7$}
                \label{fig:Vol7}
        \end{subfigure}
      \\
        \begin{subfigure}[b]{0.3\textwidth}
                \centering
                \includegraphics[width=\textwidth]{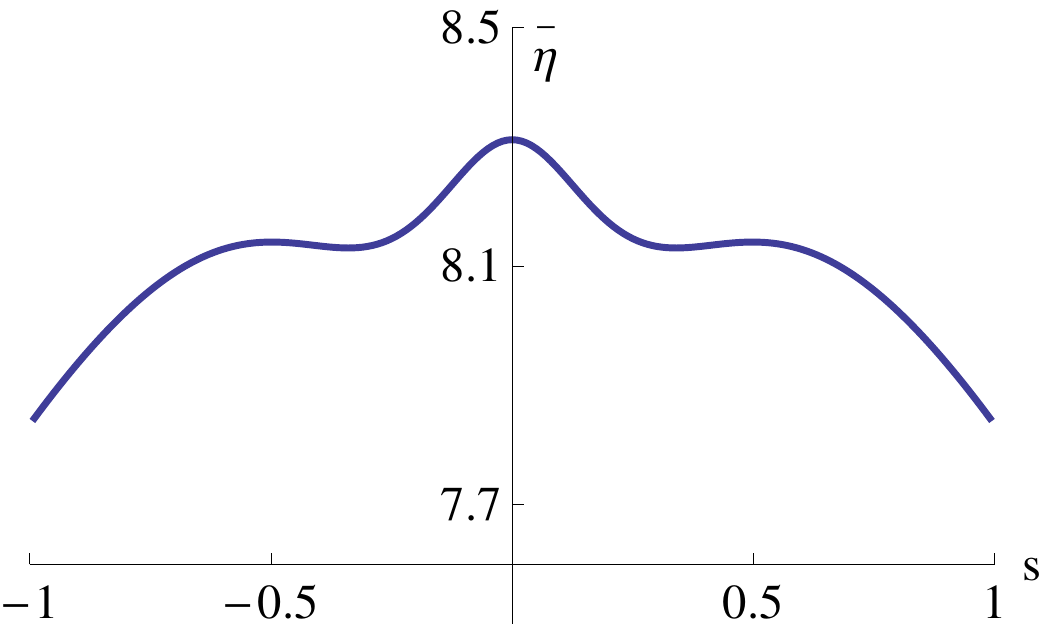}
                \caption{$\bar{\eta}(s)$ in dimension $n=8$}
                \label{fig:Vol8}
        \end{subfigure}%
        ~ 
        \begin{subfigure}[b]{0.3\textwidth}
                \centering
                \includegraphics[width=\textwidth]{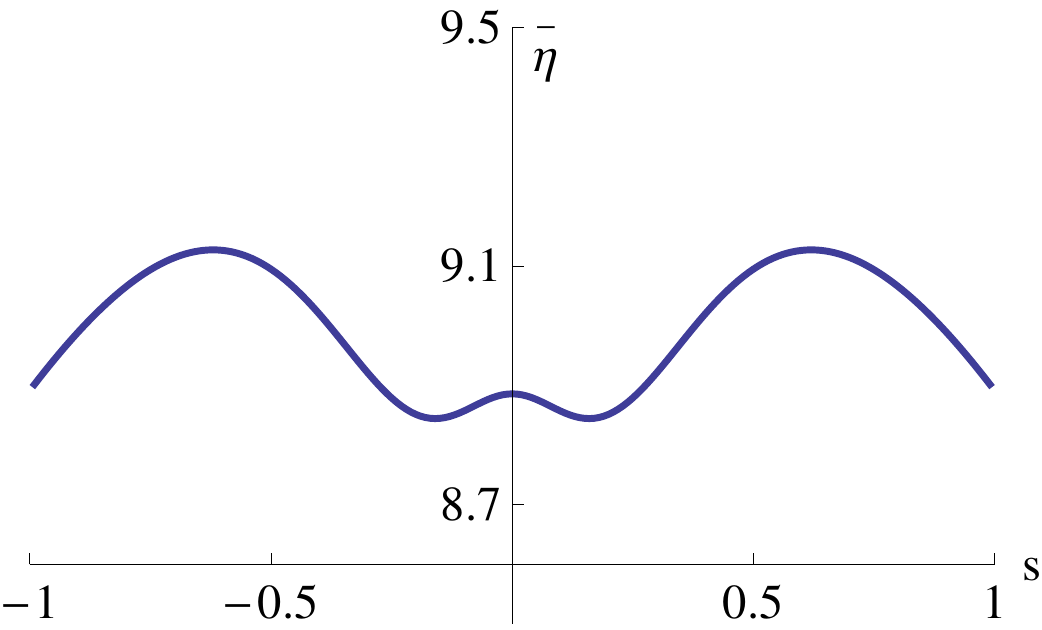}
                \caption{$\bar{\eta}(s)$ in dimension $n=9$}
                \label{fig:Vol9}
        \end{subfigure}
        ~ 
        \begin{subfigure}[b]{0.3\textwidth}
                \centering
                \includegraphics[width=\textwidth]{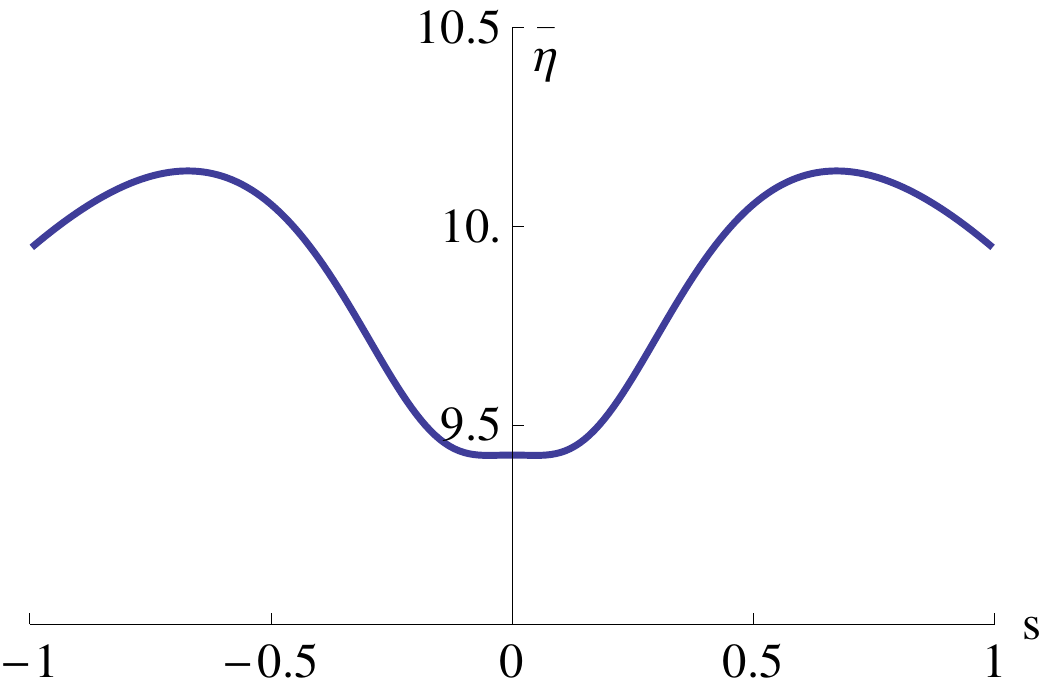}
                \caption{$\bar{\eta}(s)$ in dimension $n=10$}
                \label{fig:Vol10}
        \end{subfigure}
      \\
        \begin{subfigure}[b]{0.3\textwidth}
                \centering
                \includegraphics[width=\textwidth]{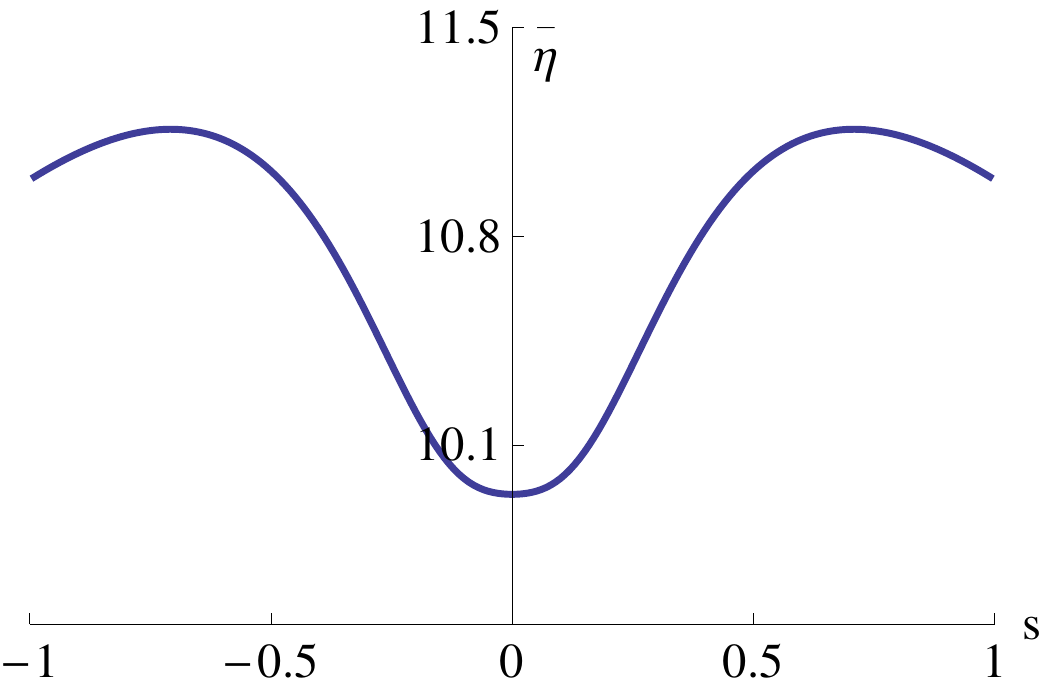}
                \caption{$\bar{\eta}(s)$ in dimension $n=11$}
                \label{fig:Vol11}
        \end{subfigure}
        ~ 
        \begin{subfigure}[b]{0.3\textwidth}
                \centering
                \includegraphics[width=\textwidth]{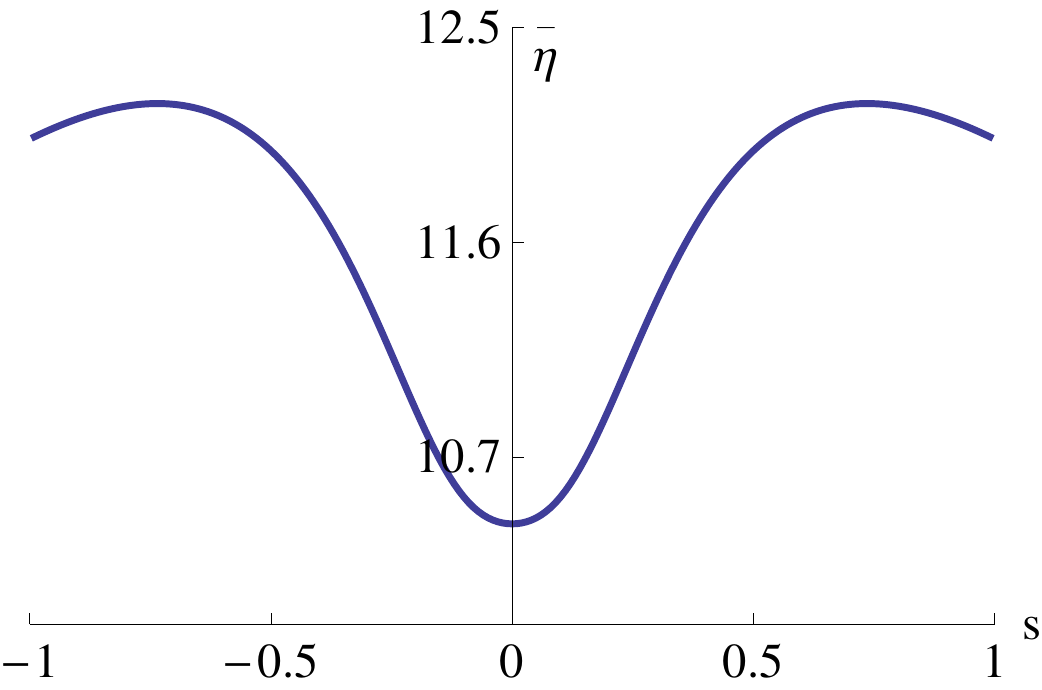}
                \caption{$\bar{\eta}(s)$ in dimension $n=12$}
                \label{fig:Vol12}
        \end{subfigure}
        \begin{subfigure}[b]{0.3\textwidth}
                \centering
                \includegraphics[width=\textwidth]{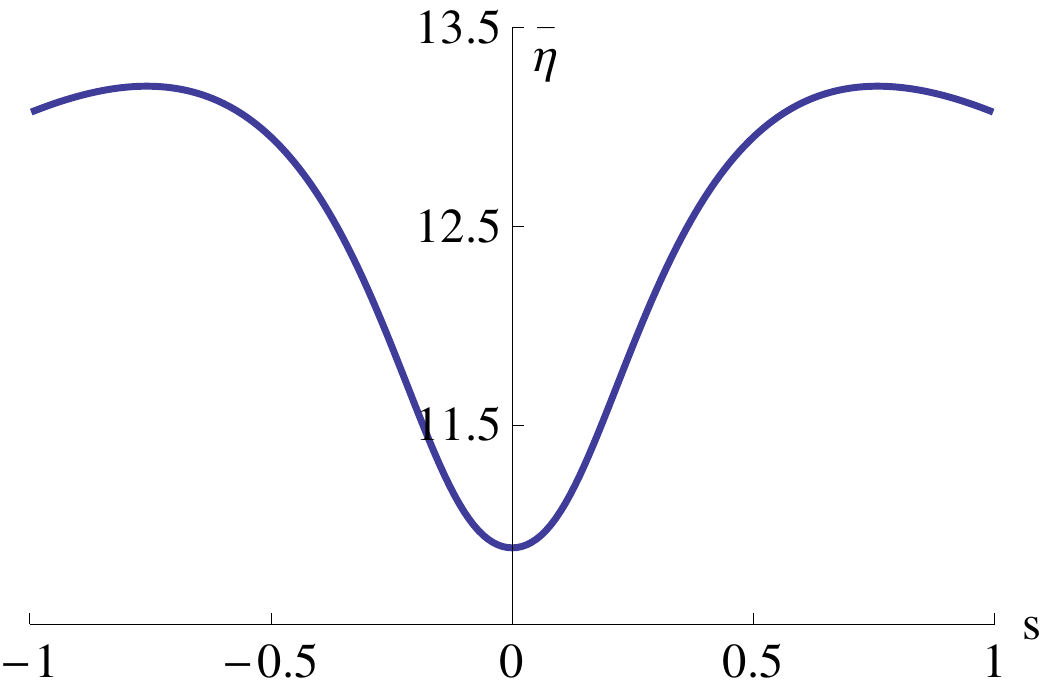}
                \caption{$\bar{\eta}(s)$ in dimension $n=13$}
                \label{fig:Vol13}
        \end{subfigure}
        \caption{Normalised bifurcation parameter in different dimensions}\label{fig:UnduloidEtaBar}
\end{figure}

\begin{figure}[ht]
        \centering
        \begin{subfigure}[b]{0.45\textwidth}
                \centering
                \includegraphics[width=0.82\textwidth]{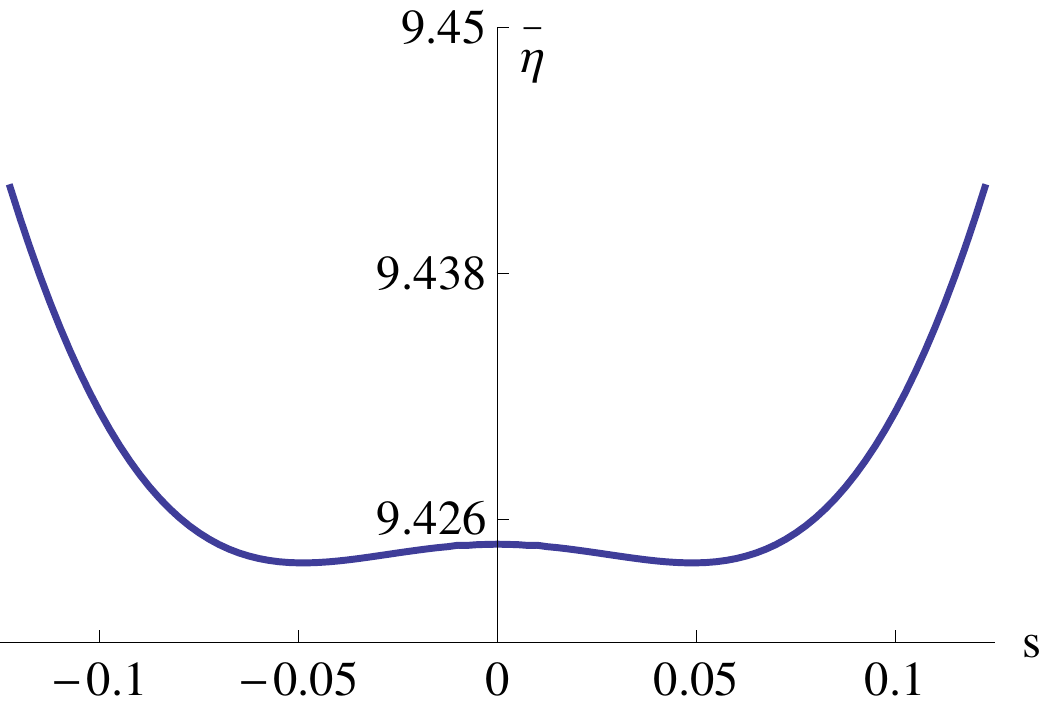}
                \caption{Close up of $\bar{\eta}(s)$ in dimension $n=10$}
                \label{fig:Vol10zoom}
        \end{subfigure}%
        ~ 
        \begin{subfigure}[b]{0.45\textwidth}
                \centering
                \includegraphics[width=0.82\textwidth]{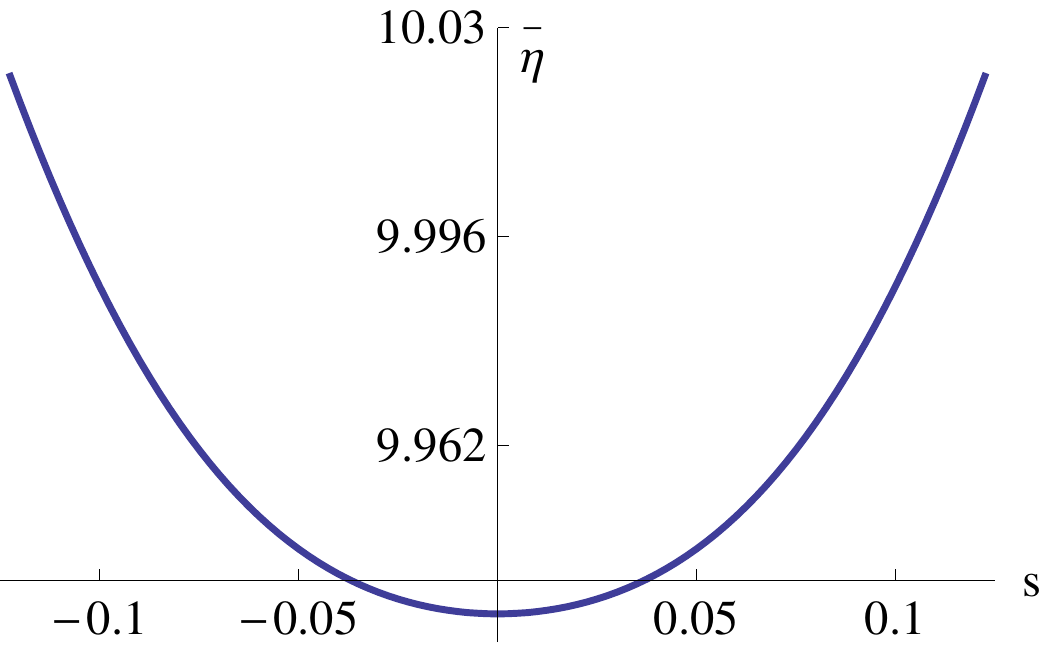}
                \caption{Close up of $\bar{\eta}(s)$ in dimension $n=11$}
                \label{fig:Vol11zoom}
        \end{subfigure}
        \caption{Turning point of the normalised bifurcation parameter for $n=10,11$}\label{fig:UnduloidEtaBarZoom}
\end{figure}

Figure \ref{fig:UnduloidEtaBar} shows these bifurcation curves for the case of the VPMCF and various $n$ values. These plots confirm that the bifurcation parameter (volume enclosed) is a local maximum (minimum) at the cylinder if $n\leq10$, while for $n\geq11$ it is a local minimum (maximum) at the cylinder; see Figure \ref{fig:UnduloidEtaBarZoom} for a close up of the turning point for dimensions ten and eleven. Interesting phenomena are also apparent in dimensions eight and higher where additional turning points appear. In dimension eight, a local maximum and minimum of the enclosed volume occur within the family of unduloids. In dimensions nine and ten, the turning points separate from each other and these points are the global maximum and minimum volume of the family. In dimensions eleven and higher only the local minimum of the volume occurs and it remains a global minimum volume of the family. This behaviour is very intriguing and it would be of interest to know what is special about these unduloids.


\appendix


\section{A Flow Invariant}\label{AppInv}
In this appendix we aim to determine under what conditions the flow (\ref{WVPCF}) has an invariant weighted-volume type quantity. For simplicity we consider the case where the hypersurfaces are axially symmetric, i.e. equation (\ref{WVPCFGraph}), in which case the question becomes when does there exist a second order operator $Q$ such that $\int Q(\rho)\,dz$ is independent of time. To perform the analysis we again consider the equivalent flow on the circle given in (\ref{WVPCFCircle}). To simplify notation we will define $l(u'):=\sqrt{1+u'^2}$.

\begin{lemma}
There exists $Q(u)=q(u,u',u'')$ such that
\begin{equation}\label{QIdent}
\int_{\Sdp}DQ(u)[v]\,dz=n\int_{\Sdp}v\Xi\left(\kapsurf{u}\right)u^{n-1}\,dz
\end{equation}
if and only if $\Xi\left(\bm{\kappa}\right)=\sum_{a=0}^n c_aE_a\left(\bm{\kappa}\right)$ for some constants $c_a\in\Rone$. In this case (\ref{QIdent}) is satisfied by the $Q$ in (\ref{defQ}).
\end{lemma}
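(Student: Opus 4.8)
The plan is to prove the two implications separately. In both directions I would first recast (\ref{QIdent}) as a pointwise identity: since $Q(u)=q(u,u',u'')$, integrating by parts twice on $\Sdp$ (no boundary terms) rewrites $\int_{\Sdp}DQ(u)[v]\,dz$ as $\int_{\Sdp}\mathcal E[q](u)\,v\,dz$ with Euler--Lagrange expression $\mathcal E[q]:=q_u-\frac{d}{dz}q_{u'}+\frac{d^{2}}{dz^{2}}q_{u''}$, so (\ref{QIdent}), holding for all $v$, is equivalent to $\mathcal E[q](u)=n\,\Xi(\kapsurf u)\,u^{n-1}$ for every admissible profile $u$.

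For ``$\Leftarrow$'' I would argue variationally. Note that $\int_{\Sdp}DQ(u)[v]\,dz$ is the first variation of $\mathcal Q(u):=\int_{\Sdp}Q(u)\,dz$, and writing $v=L(u)\varphi$ turns a perturbation $v$ of the profile into a normal perturbation of $\bm X_u$ with speed $\varphi$. With $Q$ as in (\ref{defQ}) and $\Xi=\sum_{a=0}^{n}c_aE_a$, the functional $\mathcal Q$ decomposes (up to a fixed dimensional constant) into $c_0$ times the volume enclosed by $\bm X_u$ plus, for each $a\ge1$, a multiple of $c_a\int_{M^{n}}E_{a-1}(\bm\kappa)\,d\mu$ --- this is the algebraic identification of $\int_{\Sdp}Q(u)\,dz$ with a combination of mixed volumes of $\bm X_u$ recorded just before the statement in the computation of $WVol$, read through (\ref{MixedV}). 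Applying the classical first-variation formulas in the flat ambient $\Rnone$, namely (with the sign conventions of Section~\ref{SecRedEq}) $\delta\,\mathrm{Vol}[\varphi\bm\nu]=\int_{M^{n}}\varphi\,d\mu$ and $\delta\big(\int_{M^{n}}E_{a-1}\,d\mu\big)[\varphi\bm\nu]=a\int_{M^{n}}\varphi\,E_a\,d\mu$ (the latter using divergence-freeness of the Newton tensors, valid in flat space), and then substituting $d\mu=u^{n-1}L\,dz$ and $v=L\varphi$, the factors of $|\Snm|$ cancel and the terms telescope to $n\int_{\Sdp}v\big(\sum_{a=0}^{n}c_aE_a(\kapsurf u)\big)u^{n-1}\,dz=n\int_{\Sdp}v\,\Xi(\kapsurf u)u^{n-1}\,dz$, which is (\ref{QIdent}). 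Alternatively one can avoid the Minkowski identities and simply differentiate the explicit $Q$ of (\ref{defQ}) in $v$, integrating by parts to clear $v'$ and $v''$; this is elementary but longer, and I would keep it as a backup.

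For ``$\Rightarrow$'', suppose $q$ with $\mathcal E[q](u)=n\Xi(\kapsurf u)u^{n-1}$ exists. Regarding $u,u',u'',u''',u''''$ at a point as independent, the right-hand side involves only $u,u',u''$ while the $u''''$-coefficient of $\mathcal E[q]$ is $q_{u''u''}$; hence $q_{u''u''}\equiv0$, i.e. $q=a(u,u')+b(u,u')u''$ is affine in $u''$, and for such $q$ the $u'''$-terms of $\mathcal E[q]$ cancel automatically. Computing $\mathcal E[q]$ for $q=a+bu''$ gives an expression affine in $u''$; since $\Xi(\kapsurf u)=\xi\!\left(\tfrac{1}{uL},-\tfrac{u''}{L^{3}}\right)$ with $\xi(\kappa_1,\kappa_n):=\Xi(\kappa_1,\dots,\kappa_1,\kappa_n)$ and $u''\mapsto-u''/L^{3}$ is an invertible affine change, equality forces $\xi$ affine in $\kappa_n$, say $\xi(\kappa_1,\kappa_n)=A(\kappa_1)+B(\kappa_1)\kappa_n$ with $A,B$ smooth on $(0,\infty)$. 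Matching the two coefficients of $u''$ yields a pair of equations for $a,b$ in terms of $A,B$; eliminating $a$ (e.g. by differentiating in $u'$ and setting $u'=0$, equivalently by imposing that the $1$-form $v\mapsto n\int v\Xi(\kapsurf u)u^{n-1}\,dz$ be closed) gives the single relation $A'(p)=(n-1)B(p)-pB'(p)$. Finally, using that $q$ is a genuine second-order operator defined for all radii $u>0$, one inserts the natural ansatz ($a,b$ being sums of polynomials in $u$ times powers of $L$) into the two equations; only finitely many monomials $u^{n-1-j}L^{-j}$ can occur on the geometric side, which caps $\deg A,\deg B\le n-1$, and together with the displayed relation this is equivalent to $\xi(\kappa_1,\kappa_n)=\sum_{a=0}^{n}c_a\big(\binom{n-1}{a}\kappa_1^{a}+\binom{n-1}{a-1}\kappa_1^{a-1}\kappa_n\big)=\sum_{a=0}^{n}c_aE_a(\kapsurf u)$, with $c_a\binom{n-1}{a}=[\kappa_1^{a}]A$ for $0\le a\le n-1$ and $c_n=[\kappa_1^{\,n-1}]B$. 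By ``$\Leftarrow$'' a corresponding $Q$ then exists, and may be taken to be (\ref{defQ}).

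The step I expect to be the main obstacle is the last one: passing from the local relation $A'=(n-1)B-pB'$ --- which many non-polynomial pairs satisfy, so this relation alone does not pin down $\Xi$ --- to the polynomial degree bounds on $A$ and $B$; this is exactly where one uses that $Q$ is a bona fide local operator rather than merely some primitive of the (automatically closed) $1$-form, and carefully tracking which monomials in $u$ and $L$ can appear in $a,b$ is the heart of the argument. One small point to state explicitly is that (\ref{QIdent}) only ever evaluates $\Xi$ at axially symmetric curvature vectors, so ``$\Xi=\sum_a c_aE_a$'' should be read as an identity along such vectors (equivalently, $\sum_a c_aE_a$ is the canonical symmetric extension of the restriction $\xi$), which is all that the rest of the paper uses.
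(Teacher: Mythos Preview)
Your Euler--Lagrange reduction and the first half of ``$\Rightarrow$'' (forcing $q$ affine in $u''$, hence $\Xi$ affine in $\kappa_n$) are exactly what the paper does. The divergence comes right after, and the step you flag as the main obstacle is in fact unnecessary: the paper finishes ``$\Rightarrow$'' in one line by invoking symmetry. Since $\Xi$ is assumed symmetric (assumption \descref{(A1)}), being affine in $\kappa_n$ forces it to be affine in every $\kappa_a$; a symmetric function that is affine in each variable separately is automatically a linear combination of the $E_a$. This bypasses your coefficient-matching, the closedness relation $A'=(n-1)B-pB'$, and the degree-capping argument entirely. Your worry that the relation alone admits non-polynomial solutions is well founded, and your sketch for excluding them (``natural ansatz'', ``only finitely many monomials $u^{n-1-j}L^{-j}$ can occur'') does not go through as stated: nothing in the hypothesis forces $a,b$ to be sums of monomials in $u$ and $L$, so there is no a priori finiteness to exploit. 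The missing idea is precisely the symmetry step.

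For ``$\Leftarrow$'' your variational argument via the first variation of mixed volumes is a legitimate alternative; the paper instead carries out what you call the ``backup'': it linearises the explicit $Q$ of (\ref{defQ}) term by term, writing $Q_0=\tfrac1n u^n$ and $Q_a=\tfrac1a E_{a-1}(\kapsurf{u})u^{n-1}L(u)$, and checks directly that the Euler--Lagrange expression of each $Q_a$ equals $E_a(\kapsurf{u})u^{n-1}$. Your route is more conceptual but imports the divergence-freeness of Newton tensors; the paper's route is a self-contained computation. Either is fine, but note that your final caveat --- that (\ref{QIdent}) only sees the restriction $\xi$ --- is resolved once you use symmetry as above: a symmetric multiaffine $\Xi$ is uniquely recovered from its restriction to $(\kappa_1,\ldots,\kappa_1,\kappa_n)$, so the conclusion really is about $\Xi$ itself.
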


\begin{proof}
We proceed with the `only if' case by calculating the linearisation of $Q$ inside an integral and use integration by parts
\begin{align*}
\int_{\Sdp}DQ(u)[v]\,dz=& \int_{\Sdp}\frac{\partial q}{\partial x_1}\left(u,u',u''\right)v+\frac{\partial q}{\partial x_2}\left(u,u',u''\right)\frac{\partial v}{\partial z} +\frac{\partial q}{\partial x_3}\left(u,u',u''\right)\frac{\partial^2 v}{\partial z^2}\,dz\\
=& \int_{\Sdp}v\left(\frac{\partial q}{\partial x_1}(u,u',u'')-\frac{\partial}{\partial z}\left(\frac{\partial q}{\partial x_2}\left(u,u',u''\right) -\frac{\partial}{\partial z}\left(\frac{\partial q}{\partial x_3}\left(u,u',u''\right)\right)\right)\right)\,dz.
\end{align*}
By now expanding the derivatives we obtain
\begin{align*}
\int_{\Sdp}DQ(u)[v]\,dz=& \int_{\Sdp}\left(\frac{\partial q}{\partial x_1}(u,u',u'')-\frac{\partial^2q}{\partial x_1\partial x_2}\left(u,u',u''\right)u'-\frac{\partial^2q}{\partial x_2^2}\left(u,u',u''\right)u''\right.\\
&\hspace{0.8cm}\left. -\frac{\partial^2q}{\partial x_3\partial x_2}\left(u,u',u''\right)u''' +\frac{\partial^3 q}{\partial x_1^2\partial x_3}\left(u,u',u''\right)u'^2\right.\\
&\hspace{0.8cm}\left.  +2\frac{\partial^3q}{\partial x_1\partial x_2\partial x_3}\left(u,u',u''\right)u'u'' +2\frac{\partial^3q}{\partial x_1\partial x_3^2}\left(u,u',u''\right)u'u'''\right.\\
&\hspace{0.8cm}\left. +\frac{\partial^2q}{\partial x_1\partial x_3}\left(u,u',u''\right)u'' +\frac{\partial^3q}{\partial x_2^2\partial x_3}\left(u,u',u''\right)u''^2\right.\\
&\hspace{0.8cm}\left. +2\frac{\partial^3 q}{\partial x_2\partial x_3^2}\left(u,u',u''\right)u''u'''+\frac{\partial^2q}{\partial x_2\partial x_3}\left(u,u',u''\right)u'''\right.\\
&\hspace{0.8cm}\left. +\frac{\partial^3q}{\partial x_3^3}\left(u,u',u''\right)u'''^2 +\frac{\partial^2q}{\partial x_3^2}\left(u,u',u''\right)u''''\right)v\,dz.
\end{align*}
However since $\Xi\left(\kapsurf{u}\right)u^{n-1}$ does not depend on $u''''$ we see that we require $\frac{\partial^2q}{\partial x_3^2}=0$ and hence $q$ is linear in $x_3$. Let $q\left(x_1,x_2,x_3\right)=\alpha\left(x_1,x_2\right)x_3+\beta\left(x_1,x_2\right)$, then:
\begin{align*}
\int_{\Sdp}DQ(u)[v]\,dz=& \int_{\Sdp}\left(2\frac{\partial\alpha}{\partial x_1}\left(u,u'\right)u'' +\frac{\partial\beta}{\partial x_1}\left(u,u'\right) +\frac{\partial^2\alpha}{\partial x_1\partial x_2}\left(u,u'\right)u'u'' -\frac{\partial^2\beta}{\partial x_1\partial x_2}\left(u,u'\right)u'\right.\\
&\hspace{0.8cm}\left. -\frac{\partial^2\beta}{\partial x_2^2}\left(u,u'\right)u'' +\frac{\partial^2\alpha}{\partial x_1^2}\left(u,u'\right)u'^2\right)v\,dz.
\end{align*}
This is now linear in $u''$, hence $\Xi\left(\kapsurf{u}\right)u^{n-1}$ must also be linear in $u''$ for (\ref{QIdent}) to hold. This in turn means that $\Xi\left(\bm{\kappa}\right)$ must be linear in $\kappa_n$, however by symmetry it is therefore linear in all $\kappa_a$. Thus it must be a linear combination of the $E_a$s.

To see that if $\Xi$ is a linear combination of the $E_a$s we do obtain a $Q$ such that (\ref{QIdent}) holds we linearise the $Q$ given in (\ref{defQ}) in parts. Firstly consider $Q_0(u)=\frac{1}{n}u^n$:
\begin{equation*}
\int_{\Sdp}DQ_0(u)[v]\,dz=\int_{\Sdp}u^{n-1}v\,dz=\int_{\Sdp}vE_0u^{n-1}\,dz.
\end{equation*}

Now consider $Q_a(u)=\frac{1}{a}E_{a-1}\left(\kapsurf{u}\right)u^{n-1}L(u)$ for $1\leq a\leq n+1$ and use the notation $\binom{b}{k}=0$ for $k>b$. This means that
\begin{equation*}
q\left(x_1,x_2,x_3\right)=\frac{1}{a}\left(\binom{n-1}{a-1}x_1^{n-a}l(x_2)^{2-a}-\binom{n-1}{a-2}x_1^{n+1-a}l(x_2)^{-a}x_3\right),
\end{equation*}
with $\frac{dl}{dx_2}=x_2l(x_2)^{-1}$. A standard computation then gives
\begin{align*}
\frac{\partial q}{\partial x_1}\left(u,u',u''\right)-\frac{\partial}{\partial z}\left(\frac{\partial q}{\partial x_2}\left(u,u',u''\right)-\frac{\partial}{\partial z}\left(\frac{\partial q}{\partial x_3}\left(u,u',u''\right)\right)\right)&\\
&\hspace{-3cm}=\binom{n-1}{a}u^{n-a-1}l(u')^{-a}-\binom{n-1}{a-1}u^{n-a}l(u')^{-(a+2)}u''\\
&\hspace{-3cm}=E_{a}\left(\kapsurf{u}\right)u^{n-1},
\end{align*}
where we set $E_{n+1}=0$. Using the formula for $\int_{\Sdp}DQ(u)[v]\,dz$ from the start of this proof we have:
\begin{equation*}
\int_{\Sdp}DQ_a(u)[v]\,dz=\int_{\Sdp}vE_{a}\left(\kapsurf{u}\right)u^{n-1},
\end{equation*}

Therefore if $\Xi\left(\bm{\kappa}\right)=\sum_{a=0}^n c_aE_a\left(\bm{\kappa}\right)$ and we set $Q(u)=n\sum_{a=0}^{n+1} c_aQ_a(u)$ for some $c_{n+1}\in\Rone$ (as in equation (\ref{defQ})) then we have
\begin{align*}
\int_{\Sdp}DQ(u)[v]\,dz=&n\sum_{a=0}^{n+1}c_a\int_{\Sdp}DQ_a(u)[v]\,dz\\
=&n\sum_{a=0}^{n+1} c_a\int_{\Sdp}vE_{a}\left(\kapsurf{u}\right)u^{n-1}\,dz\\
=&n\int_{\Sdp}v\left(\sum_{a=0}^n c_aE_{a}\left(\kapsurf{u}\right)\right)u^{n-1}\,dz\\
=&n\int_{\Sdp}v\Xi\left(\kapsurf{u}\right)u^{n-1}\,dz.
\end{align*}
\end{proof}

The conditions for an invariant of the flow follow easily.
\begin{corollary}\label{CorInvar}
There exists a non-zero invariant of the flow (\ref{WVPCFCircle}) of the form $\int_{\Sdp}Q(u)\,dz$, where $Q$ is a second order operator satisfying $Q(0)=0$, if and only if $\Xi\left(\bm{\kappa}\right)=\sum_{a=0}^n c_aE_a\left(\bm{\kappa}\right)$ for some $c_a\in\Rone$, $0\leq a\leq n$.
\end{corollary}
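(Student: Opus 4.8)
The plan is to differentiate the functional $u\mapsto\int_{\Sdp}Q(u)\,dz$ along the flow (\ref{WVPCFCircle}) and read off exactly when it is stationary. Writing $Q(u)=q(u,u',u'')$ and integrating by parts on the circle (no boundary terms arise), one gets
\[
\frac{d}{dt}\int_{\Sdp}Q(u)\,dz=\int_{\Sdp}DQ(u)\!\left[\tfrac{\partial u}{\partial t}\right]dz=\int_{\Sdp}DQ(u)[G(u)]\,dz=\int_{\Sdp}\mathcal{E}(u)\,G(u)\,dz,
\]
where $\mathcal{E}(u):=\frac{\partial q}{\partial x_1}-\partial_z\frac{\partial q}{\partial x_2}+\partial_z^2\frac{\partial q}{\partial x_3}$ is evaluated at $(u,u',u'')$. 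For the ``if'' direction I would take $Q$ to be the operator in (\ref{defQ}), normalised by subtracting the constant $q(0,0,0)$ so that $Q(0)=0$; this alters $\int_{\Sdp}Q\,dz$ only by an additive constant and so changes neither its invariance nor its non-triviality. By the preceding Lemma this $Q$ satisfies (\ref{QIdent}), and taking $v=G(u)$, using $u^{n-1}L(u)\,dz=\,d\musurf{u}$ and the definition of the non-local term,
\[
\int_{\Sdp}DQ(u)[G(u)]\,dz=n\int_{\Sdp}\Xi(\kapsurf{u})\big(\overline{F}(u)-F(\kapsurf{u})\big)\,d\musurf{u}=n\Big(\overline{F}(u)\!\int_{\Sdp}\!\Xi\,d\musurf{u}-\!\int_{\Sdp}\!F\,\Xi\,d\musurf{u}\Big)=0,
\]
with $\overline{F}(u)=\big(\int_{\Sdp}\Xi\,d\musurf{u}\big)^{-1}\int_{\Sdp}F\,\Xi\,d\musurf{u}$. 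Hence $\int_{\Sdp}Q(u)\,dz$ is an invariant, and it is non-trivial because it is a non-constant function of the cylinder radius whenever some $c_a\neq0$.

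For the converse I would argue as follows. If $\int_{\Sdp}Q(u)\,dz$ is a non-zero invariant with $Q$ second order and $Q(0)=0$, then $\int_{\Sdp}\mathcal{E}(u)\,G(u)\,dz=0$ along every solution; since the initial datum ranges over an open set and both sides are algebraic in finitely many derivatives of $u$, this holds for every admissible $u$. The same telescoping identity as above gives the ``automatic'' relation $\int_{\Sdp}\Xi(\kapsurf{u})u^{n-1}\,G(u)\,dz=0$ for all $u$; writing $G(u)=L(u)(\overline{F}(u)-F(\kapsurf{u}))$ and eliminating $\overline{F}(u)$, one finds that the function $\mathcal{E}(u)L(u)-\mu(u)\,n\,\Xi(\kapsurf{u})u^{n-1}L(u)$ — with $\mu(u)$ normalising its integral to zero — is $L^2$-orthogonal to $F(\kapsurf{u})$ for every $u$. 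Exploiting the richness of these directions (running across speed functions $F$, or, for a fixed $F$, testing against localised perturbations of $u$ and using non-degeneracy of $F$) should force $\mathcal{E}(u)=c(u)\,\Xi(\kapsurf{u})u^{n-1}$; a trajectory argument — comparing the ratio $\mathcal{E}(u)/(\Xi(\kapsurf{u})u^{n-1})$ at points where $(u,u',u'')$ agrees, across different $u$ — then upgrades $c(u)$ to a genuine constant $c$, and $c\neq0$ since otherwise $\int_{\Sdp}Q\,dz$ would be the zero functional (using $Q(0)=0$). From here the Lemma's computation applies essentially verbatim: $\mathcal{E}(u)$ involves $u''''$ only through $\partial^2q/\partial x_3^2$, which must therefore vanish, so $q$ is affine in $x_3$; then $\mathcal{E}(u)$ is affine in $u''$, hence so is $\Xi(\kapsurf{u})u^{n-1}$, hence $\Xi$ is affine in $\kappa_n$ and, by symmetry, in every $\kappa_a$, which is precisely $\Xi=\sum_{a=0}^n c_aE_a$.

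I expect the main obstacle to be exactly the middle step of the converse: extracting from the single scalar identity $\int_{\Sdp}\mathcal{E}(u)\,G(u)\,dz=0$ the pointwise relation $\mathcal{E}(u)=c\,\Xi(\kapsurf{u})u^{n-1}$, i.e. showing that the flow directions $G(u)$ (equivalently the functions $F(\kapsurf{u})$ as $u$ varies) are plentiful enough that only multiples of the weighted density $\Xi(\kapsurf{u})u^{n-1}$ can be orthogonal to all of them. Once that is in hand, the remaining order-and-symmetry bookkeeping is the routine calculation already done in the Lemma, which is why the Corollary follows ``easily'' from it.
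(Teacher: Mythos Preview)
Your ``if'' direction is exactly the paper's argument: take the $Q$ of (\ref{defQ}), apply the Lemma's identity (\ref{QIdent}) with $v=\partial_t u=G(u)$, recognise the result as $n\int(\overline{F}-F)\,\Xi\,d\mu_u$, and note this vanishes by construction of the averaged term. The paper's displayed proof of the Corollary consists of precisely this four-line computation and nothing else.

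For the ``only if'' direction, the paper in fact offers no argument beyond the introductory sentence ``The conditions for an invariant of the flow follow easily''; the written proof treats only the forward implication. You go considerably further: you correctly reduce the converse to establishing the pointwise relation $\mathcal{E}(u)=c\,\Xi(\kapsurf{u})\,u^{n-1}$ from the scalar constraint $\int_{\Sdp}\mathcal{E}(u)\,G(u)\,dz=0$, after which the Lemma's order-counting finishes the job. Your candour about this step is well placed. The difficulty is real: at a fixed $u$ the flow supplies only the single test direction $G(u)$, not an arbitrary $v$, so the Lemma's variational computation does not transfer directly. Your proposed remedies---varying $u$ to sweep out enough directions, or varying the speed $F$ across the class allowed by \descref{(A1)}--\descref{(A2)}---are the natural ones, but making either rigorous (and showing the resulting proportionality constant is independent of $u$) is genuine work, not bookkeeping. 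In short, your treatment of the converse is more careful than the paper's own, and the gap you isolate is one the paper leaves open as well.
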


\begin{proof}
Set $Q$ as in (\ref{defQ}). Then
\begin{align*}
\frac{d}{dt}\left(\int_{\Sdp}Q(u)\,dz\right) =& \int_{\Sdp}DQ(u)\left[\frac{\partial u}{\partial t}\right]\,dz\\
=&n\int_{\Sdp}\frac{\partial u}{\partial t}\Xi\left(\kapsurf{u}\right)u^{n-1}\,dz\\
=&n\int_{\Sdp}\left(\frac{1}{\int_{\Sdp}\Xi\left(\kapsurf{u}\right)\,d\musurf{u}}\int_{\Sdp}F\left(\kapsurf{u}\right)\Xi\left(\kapsurf{u}\right)\,d\musurf{u} -F\left(\kapsurf{u}\right)\right)\Xi\left(\kapsurf{u}\right)\,d\musurf{u}\\
=&0.
\end{align*}
\end{proof}

Using the theorem by Hadwiger \cite{Hadwiger57} we also obtain the following corollary.
\begin{corollary}
Any continuous, rigid motion invariant valuation is an invariant for a flow of the form (\ref{WVPCF}).
\end{corollary}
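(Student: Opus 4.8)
The plan is to derive the corollary directly from Hadwiger's characterisation theorem together with Corollary \ref{CorInvar}. By Hadwiger's theorem \cite{Hadwiger57}, every continuous, rigid-motion invariant valuation $\mu$ on convex bodies in $\Rnone$ has the form $\mu=\sum_{j=0}^{n+1}a_jV_j$ for constants $a_j\in\Rone$, where the $V_j$ are the intrinsic volumes; after matching normalisations these are precisely the mixed volumes of (\ref{MixedV}), which have already been extended to the (topologically trivial) hypersurfaces on which the flow acts by that same formula. The term $a_0V_0$ is a constant multiple of the Euler characteristic and is therefore unchanged along any flow for purely topological reasons, so it suffices to exhibit, for the remaining combination $\sum_{j=1}^{n+1}a_jV_j$, a flow of the form (\ref{WVPCF}) under which it is conserved.

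For this I would invoke the identity established in Section \ref{SecRedEq}: with $\Xi(\bm{\kappa})=\sum_{a=0}^nc_aE_a(\bm{\kappa})$ the functional
\[
WVol(u)=\frac{2}{\omega_n}\Bigl(c_0V_{n+1}\bigl(u|_{[0,d]}\bigr)+\sum_{a=1}^nc_a\binom{n+1}{a}V_{n+1-a}\bigl(u|_{[0,d]}\bigr)\Bigr)
\]
is a flow invariant, by Corollary \ref{CorInvar}. Reading off coefficients, the map sending $(c_0,\dots,c_n)$ to the coefficient vector of $(V_{n+1},V_n,\dots,V_1)$ in $WVol$ is diagonal with the nonzero entries $\tfrac{2}{\omega_n}\binom{n+1}{a}$, hence a linear isomorphism of $\Rone^{n+1}$. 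Thus, given the $a_j$ from Hadwiger's theorem, I set $c_a:=\tfrac{\omega_n}{2}\binom{n+1}{a}^{-1}a_{n+1-a}$ for $a=0,\dots,n$, obtaining a weight $\Xi=\sum_{a=0}^nc_aE_a$ with $WVol=\sum_{j=1}^{n+1}a_jV_j$. Pairing this $\Xi$ with any admissible speed, for instance $F(\bm{\kappa})=\sum_{a=1}^n\kappa_a$ (which satisfies \descref{(A1)} and \descref{(A2)} for every $\tilde{R}$), then yields a flow of the form (\ref{WVPCF}) for which $\sum_{j=1}^{n+1}a_jV_j$, and hence $\mu=a_0V_0+\sum_{j=1}^{n+1}a_jV_j$, is invariant.

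I do not expect a substantive obstacle here: the invariance in Corollary \ref{CorInvar} is purely algebraic and requires no sign or parabolicity condition on $\Xi$, so the coefficients $c_a$ forced on us by Hadwiger cause no trouble in writing down a flow ``of the form (\ref{WVPCF})''. The only points needing care are bookkeeping: reconciling Hadwiger's normalisation of the intrinsic volumes with the mixed-volume convention of (\ref{MixedV}) (which merely rescales the $a_j$, leaving the diagonal map invertible), and observing that $V_0$ lies outside the range of $WVol$ and so must be accounted for separately, as the trivially conserved Euler characteristic. One could additionally discuss when the constructed flow meets \descref{(A3)} — it suffices that the resulting $\Xi$ be positive on some cylinder — but this is not required for the statement as phrased.
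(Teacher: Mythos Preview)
Your argument is correct and is exactly the one the paper intends: the paper offers no explicit proof beyond the sentence ``Using the theorem by Hadwiger \cite{Hadwiger57} we also obtain the following corollary'', and your write-up simply unpacks that---Hadwiger expresses the valuation as a linear combination of mixed volumes, the diagonal correspondence between the $c_a$ in $\Xi$ and the coefficients of the $V_{n+1-a}$ in $WVol$ lets you choose $\Xi$, and Corollary~\ref{CorInvar} then gives the invariance. Your handling of the constant term $a_0V_0$ and the remark that \descref{(A3)} is not needed for the bare statement are both appropriate refinements.
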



\bibliographystyle{plain}
\bibliography{MyBibliography}


\end{document}